\newtheorem{theorem}{Theorem}[section]
\newtheorem{lemma}[theorem]{Lemma}
\newtheorem{proposition}[theorem]{Proposition}
\newtheorem{corollary}[theorem]{Corollary}
\newtheorem{remark}[theorem]{Remark}
\newtheorem{definition}{Definition}[section]
\def\R{{\mathbb R}}
\def\cA{{\mathcal A}}
\def\cB{{\mathcal B}}
\def\cH{{\mathcal H}}
\def\cL{{\mathcal L}}
\def\cQ{{\mathcal Q}}
\def\a{\alpha}
\def\b{\beta}
\def\e{\varepsilon}
\def\d{\delta}
\def\D{\triangle}
\def\k{\kappa}
\def\l{\lambda}
\def\L{\Lambda}
\def\m{\mu}
\def\n{\nabla}
\def\p{\partial}
\def\r{\rho}
\def\s{\sigma}
\def\t{\tau}
\def\U{\Upsilon}
\def\w{\omega}
\def\W{\Omega}
\def\g{\gamma}
\def\z{\zeta}
\def\1{\left(}
\def\2{\right)}
\def\3{\left\{}
\def\4{\right\}}
\def\8{\infty}
\def\sm{\setminus}
\def\ss{\subseteq}
\newcommand{\mres}{\mathbin{\vrule height 1.6ex depth 0pt width
0.13ex\vrule height 0.13ex depth 0pt width 1.3ex}}
\DeclareMathOperator*{\dvg}{div}
\DeclareMathOperator*{\osc}{osc}
\begin{document}

\title{Regularity for Shape Optimizers: The Nondegenerate Case}

\author{Dennis Kriventsov}
\address[Dennis Kriventsov]{Courant Institute of Mathematical Sciences, New York University, New York}
\email{dennisk@cims.nyu.edu}  

\author{Fanghua Lin}
\address[Fanghua Lin]{Courant Institute of Mathematical Sciences, New York University, New York}
\email{linf@cims.nyu.edu}

\date{June 14, 2017}

\begin{abstract}
	We consider minimizers of
	\[
	F(\l_1(\W),\ldots,\l_N(\W)) + |\W|,
	\]
	where $F$ is a function strictly increasing in each parameter, and $\l_k(\W)$ is the $k$-th Dirichlet eigenvalue of $\W$. Our main result is that the reduced boundary of the minimizer is composed of $C^{1,\a}$ graphs, and exhausts the topological boundary except for a set of Hausdorff dimension at most $n-3$. We also obtain a new regularity result for vector-valued Bernoulli type free boundary problems. 
\end{abstract}

\maketitle

\section{Introduction}\label{sec:intro}

We consider the problem of minimizing a functional of the form 
\begin{equation}\label{eq:one}
F\1\l_1(\W),\ldots,\l_N(\W)\2+|\W|,
\end{equation}
where $\W\ss \R^n$ is an open set and $\l_j(\W)$ is the $j$-th Dirichlet eigenvalue of the Laplacian on $\W$. Here $F=F(\xi_1,\ldots,\xi_N)$ is a $C^1$ function which is increasing in the parameters, and has $\p_{\xi_j}F\geq \l>0$. This second property plays a key role in the arguments below, and we believe that some of the conclusions presented may fail were it to be omitted. Provided the function $F$ is homogeneous, an equivalent formulation of this problem is to minimize
\begin{equation}\label{eq:onestar}
F\1\l_1(\W),\ldots,\l_N(\W)\2
\end{equation}
over all sets of a fixed measure; minimizers of these two functionals are dilates of each other. 

For example, our results apply to the linear combinations of the eigenfunctions
\[
\sum_{k=1}^N \m_k \l_k(\W),
\]
to functionals like
\[
\sum_{k=1}^N \l_k^p
\]
or to maximizers of
\[
\sum_{k=1}^N \frac{1}{\l_k^p}.
\]

Functionals of this type figure in many classical results and inequalities related to the spectrum of the Laplacian, and the shape of minimizers is usually understood imperfectly at best. When $N=1$, it is well-known that the sole minimizer is a ball; however, even in the case of $N=2$ there are open questions about the minimizer's shape. We refer the reader to \cite{H} for further discussion, references, and many open problems.

Until relatively recently, general properties of minimizers were not understood. A major breakthrough was achieved in \cite{DB}, where such functionals were shown to admit minimizers in the class of \emph{quasiopen} sets--sets which are positivity sets of functions in the Sobolev space $H^1$--when restricted to some ball. This was recently improved in the works \cite{Bucur,MP} (which discussed the especially interesting case $F(\W) = \l_N$, but apply also under our assumptions), where it was shown that minimizers may be found without the restriction to a compact region, that they are open sets, and that they have finite perimeter. A further result, and one which we will use extensively, is that for a functional under our assumptions, any eigenfunction corresponding to $\l_1,\ldots,\l_N$ is Lipschitz continuous \cite{BMPV}.

In this paper, we consider the regularity of the boundary $\p \W$. Our main result is the following:

\begin{theorem}\label{thm:shapes} Let $\W$ be a minimizer of \eqref{eq:one}, and let $\p^* \W$ be its reduced boundary. Then $\p^*\W$ is a relatively open set locally composed of graphs of $C^{1,\a}$ functions. Furthermore, the Hausdorff dimension of $\p \W\sm \p^* \W$ is at most $n-3$; in particular, if $n=2$, then $\p \W $ is a union of finitely many closed $C^{1,\a}$ curves. 
\end{theorem}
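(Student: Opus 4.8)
The plan is to combine the free boundary techniques of Alt–Caffarelli (and its vector-valued extensions) with the spectral structure of the problem. The starting point is to pass from the minimization of $\eqref{eq:one}$ to a free boundary problem for the eigenfunctions $u_1,\ldots,u_N$ associated to $\l_1(\W),\ldots,\l_N(\W)$. Since $F$ is $C^1$ with $\p_{\xi_j}F\geq \l>0$, a first variation argument shows that these eigenfunctions (suitably normalized) satisfy a system of the form $-\Delta u_j = \l_j(\W) u_j$ in $\W = \{|U|>0\}$, where $U = (u_1,\ldots,u_N)$, together with a Bernoulli-type condition $\sum_j c_j |\n u_j|^2 = \mathrm{const}$ on the free boundary, with $c_j = \p_{\xi_j}F > 0$. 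The key point is that, by $\cite{BMPV}$, each $u_j$ is Lipschitz, so $U$ is a Lipschitz vector-valued function, and one can hope to prove that $\W$ is, in a neighborhood of each point of a large portion of $\p\W$, an \emph{almost minimizer} (in the sense of scaling) of a vector-valued Alt–Caffarelli functional $\int (|\n U|^2 + \mathbf{1}_{\{|U|>0\}})$. The new regularity result for vector-valued Bernoulli problems advertised in the abstract is presumably exactly the tool that handles this almost-minimality.

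With this setup, I would proceed along the classical path. First, establish nondegeneracy: there is $c_0>0$ so that for $x_0 \in \p\W$ and small $r$, $\sup_{B_r(x_0)} |U| \geq c_0 r$; this is where the strict monotonicity $\p_{\xi_j}F\geq\l$ is essential, since it prevents the ``degenerate'' scenario in which some eigenfunctions vanish to higher order and the free boundary condition degenerates (hence the title ``The Nondegenerate Case''). Combined with the Lipschitz bound, this gives that $\W$ has positive density and $\partial\W$ is (Ahlfors) regular, so $|\n U|^2 \, dx + \mathcal{H}^{n-1}\mres \partial\W$ and the measure-theoretic boundary essentially coincide. Next, prove a monotonicity formula — a Weiss-type or Alt–Caffarelli–Friedman-type functional adapted to the system — which is almost monotone because of the eigenvalue right-hand side $\l_j u_j$ being a bounded perturbation. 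Then, at a point $x_0 \in \p^*\W$ of the reduced boundary, blow up: rescalings $U_{x_0,r}(x) = U(x_0 + rx)/r$ converge (along subsequences) to a global homogeneous minimizer of the pure vector Bernoulli functional, and because $x_0$ is a reduced boundary point the blow-up limit is a half-plane solution $\beta (x\cdot\nu)_+ e$ for a unit vector $e$ in $\R^N$. An $\e$-regularity / improvement-of-flatness argument (again, the vector-valued Bernoulli regularity theorem) upgrades this to: near $x_0$, $\partial\W$ is a $C^{1,\a}$ graph. Finally, a dimension-reduction (Federer) argument using the monotonicity formula and the classification of homogeneous global minimizers bounds the dimension of the singular set $\p\W\sm\p^*\W$ by $n-3$; in dimension $n=2$ there are no singularities at all, and combined with the finiteness of perimeter and the uniform $C^{1,\a}$ estimates one gets finitely many closed $C^{1,\a}$ curves.

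I expect the main obstacle to be establishing that $\W$ (equivalently $U$) is an almost-minimizer of the vector-valued Bernoulli functional, and proving the regularity theory for such almost-minimizers in the vector-valued setting. In the scalar case the free boundary condition $|\n u| = \mathrm{const}$ is well understood, but for a system the Bernoulli condition couples the gradients of all the $u_j$ with weights $c_j = \p_{\xi_j}F$, and the $u_j$ need not be comparable — some may vanish at a free boundary point, some may not — which is precisely why ``nondegeneracy'' must be imposed and exploited rather than derived. Handling the competitors in the variational inequality is also delicate: perturbing $\W$ changes \emph{all} the eigenvalues simultaneously and the dependence is through $F$, so one needs quantitative control of $\l_j(\W')$ for competitor domains $\W'$ (this is where the earlier results of $\cite{Bucur,MP,BMPV}$ on openness, finite perimeter, and Lipschitz eigenfunctions enter, along with stability estimates for eigenvalues under domain perturbation). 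A secondary technical point is the almost-monotonicity of the Weiss functional: one must show the error terms coming from the $\l_j u_j$ right-hand sides are integrable in $r$, which follows from the Lipschitz bound but requires care near points where $\W$ has small density — ruled out, again, by nondegeneracy. Once these analytic ingredients are in place, the dimension reduction and the passage from $\e$-regularity to a global description of $\partial\W$ in the plane are comparatively standard.
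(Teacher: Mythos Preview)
Your outline captures the broad architecture correctly, but there is a genuine gap at the very first step: you assert that a first variation argument yields the Bernoulli condition $\sum_j c_j |\n u_j|^2 = \text{const}$ with $c_j = \p_{\xi_j}F$. This is only valid when $F$ is differentiable with respect to domain variations, which \emph{fails} precisely when $\l_k(\W)=\l_{k+1}(\W)$ for some $k$ with $\p_{\xi_k}F \neq \p_{\xi_{k+1}}F$. Nothing in the hypotheses rules this out, and the paper explicitly emphasizes that minimizers need not have simple spectrum (Property~S). Consequently your claim that $\W$ is an almost-minimizer of a vector Alt--Caffarelli functional is unjustified in general; the paper obtains this only under Property~S (Lemma~\ref{lem:blowup}(3)), and for the general case must work instead with a weaker stationarity condition (Property~E) in which the coefficients $\xi_k$ are \emph{not} the partial derivatives of $F$ but merely some nonnegative constants.

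The missing idea is the approximation argument of Section~\ref{sec:propE}: one perturbs $F$ to a nearby functional $F_p$ (built from $p$-norms of partial sums of eigenvalues) whose structure forces its minimizers $V_p$ to have simple spectrum (Remark~\ref{rem:nonS2}), hence Property~E with explicit coefficients. One then passes to the limit $p\to\infty$ to recover Property~E for $\W$, losing the identification of the $\xi_k$ but retaining uniform bounds. The free boundary analysis (Sections~\ref{sec:harnack}--\ref{sec:iteration}) is then carried out for \emph{viscosity solutions} of the resulting overdetermined condition rather than for almost-minimizers, via a De~Silva-type flatness improvement that must also accommodate sign changes of the $u_k$ (handled by distinguishing indices with linear growth from those that are quadratically small). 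Your sketch of the singular set estimate is essentially right, but note that without Property~S the blow-ups are only \emph{stationary} (not minimizing) for the scalar Alt--Caffarelli problem, which is why the paper obtains only $n_*=2$ (hence dimension $n-3$) in general, reserving the sharper $n_*\geq 4$ for the case of simple spectrum.
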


\begin{remark} As mentioned earlier, a minimizer of \eqref{eq:onestar} will also be a minimizer of a functional of the form \eqref{eq:one} provided $F$ is homogeneous. If $F$ is not homogeneous, then a minimizer of \eqref{eq:onestar} will minimize a functional of the form
	\[
	F\1\l_1(\W),\ldots,\l_N(\W),|\W|\2,
	\]
	satisfying the same assumption of $\p_{\xi_j}F$ bounded above and below, but now also in the last parameter. Our method can be applied to this kind of functional as well with superficial changes, but we will only treat the simpler form \eqref{eq:one} below to avoid complicating the exposition.
\end{remark}

\begin{remark} The reduced boundary $\p^*\W$ in Theorem \ref{thm:shapes} will actually be (locally) analytic; the same is true in Theorem \ref{thm:fb} below. This is explained in the Appendix, see Theorem \ref{thm:higherreg}. If the first $N$ eigenvalues of $\W$ are simple, a better estimate is available on the singular set, see Theorem \ref{thm:sing}.
\end{remark}

The method of proof is based on the observation that, under favorable circumstances, performing domain variations on minimizers of $F$ leads to a stationarity condition of the form
\begin{equation}\label{eq:two}
\sum_{k=1}^N \p_{\l_k}F (u_k)_\nu^2 = 1.
\end{equation}
Here $u_k$ is the $k-$th eigenfunction, and $\nu$ is an outward unit normal. This may be interpreted as a kind of vector-valued Bernoulli-type free boundary problem. Scalar versions of this free boundary problem were studied very successfully in \cite{AC}, and then in greater generality and different techniques in \cite{C1,C2,C3}. A different approach was introduced in \cite{DeSilva}, which is the one we adapt to the vectorial setting.

There are three main difficulties with applying known free boundary results to this setting, however. First, while under special assumptions (such as if the first $N$ eigenvalues of the minimizer turn out to be simple) it is indeed possible to obtain a weak version of \eqref{eq:two}, in general $F$ may fail to be differentiable with respect to domain variations entirely. This happens whenever $\l_k = \l_{k+1}$ for some eigenvalue, and also $\p_{\l_k}F < \p_{\l_{k+1}} F$ when evaluated on the spectrum of the optimal set. In addressing this, we were inspired by the recent paper \cite{RTT}, where the authors studied optimal partition problems for functionals featuring higher eigenvalues. Encountering a similar issue, they observed that it is always possible to approximate $F$ by functionals which \emph{do} admit a domain variation formula. Then the condition \eqref{eq:two} is passed to the limit, giving
\begin{equation*}
\sum_{k=1}^N \xi_k (u_k)_\nu^2 = 1.
\end{equation*}
The relation between $\xi_k$ and the partial derivative of $F$ is lost, but we still have a free boundary condition to work with.

The other two difficulties are free-boundary related: this problem is vectorial, and the functions $u_k$ may change sign. We adapt the method of De Silva to handle this situation. The argument still proceeds by iteratively zooming in, and trying to decrease the distance between the eigenfunctions $\{u_k\}$ and their tangent object (which is a collection of half-plane functions $\{\a_k (x_n)^-\}$). We distinguish between those eigenfunctions which look like they may have vanishing normal derivative $u_\nu$ at the point we are targeting (this means that point is at the end of a nodal curve) and those which look like they have linear growth. An eigenfunction might transition from the first category to the second, but not the other way around; such transitions are then handled in a separate compactness argument. The rest of the argument is based on the Harnack inequality and linearizion of De Silva, although they require  using some specialized families of barriers. 

In particular, our main theorem also implies the following result for free boundaries:
\begin{theorem}\label{thm:fb} Let $\{ u_k\}$ be a family of harmonic functions on an open set $\W\ss B_1$, which satisfy
	\[
	\sum_{k=1}^N \xi_k (u_k)_\nu^2 = 1
	\]
	in the \emph{viscosity sense} on $\p \W$. Assume that the $u_k$ are all Lipschitz, and satisfy, for any $x\in \p \W$,
	\[
	\sup_{B_r(x),k}|u_k| \geq c r. 
	\]
	If $\p \W\ss \{|x_n|\leq \e\}$ for $\e$ small enough, and also $(0,1/2)\notin \W$, $(0,-1/2)\in \W$, then $\p \W \cap B_{1/2}$ is a $C^{1,\a}$ graph over $\{x_n = 0\}$.
\end{theorem}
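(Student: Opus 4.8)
The plan is to run a De Silva–type "improvement of flatness" iteration, adapted to the vectorial, sign-changing setting. First I would set up the flatness regime: given that $\partial\Omega \subseteq \{|x_n| \le \varepsilon\}$ with the one-sided conditions $(0,-1/2)\in\Omega$, $(0,1/2)\notin\Omega$, one expects each $u_k$ to be close (after rescaling) to a one-plane solution $\alpha_k (x_n)^-$ with $\sum_k \xi_k \alpha_k^2 = 1$. The first dichotomy I would introduce, following the discussion in the introduction, is to separate the indices $k$ into those for which $u_k$ exhibits genuine linear growth near $0$ (the "nondegenerate" eigenfunctions, where $\alpha_k$ is bounded below) and those which look like they may vanish to higher order at $0$ (end of a nodal curve, $\alpha_k \approx 0$). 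The nondegeneracy hypothesis $\sup_{B_r(x),k}|u_k| \ge cr$ guarantees that at least one index stays in the first class at every scale, which is what pins the free boundary condition down to a nontrivial Bernoulli condition rather than a degenerate one.

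The core of the argument is a Harnack inequality for the nondegenerate eigenfunctions: if $u_k$ lies in a flat strip $\alpha_k(x_n + a)^- \le u_k \le \alpha_k(x_n + b)^-$ with $b - a$ small in $B_1$, then in $B_{1/2}$ the strip improves to $b' - a' \le (1-\eta)(b-a)$ for some universal $\eta$. This is proved by a contradiction/compactness argument: rescaling the defect $(u_k - \alpha_k(x_n)^-)/(b-a)$, one extracts a limit $\tilde u_k$ which solves a linearized problem — harmonic in the limiting half-space $\{x_n < 0\}$ with a Neumann-type condition $\sum_k \xi_k \alpha_k (\tilde u_k)_n = 0$ on $\{x_n = 0\}$, coming from linearizing $\sum_k \xi_k (u_k)_\nu^2 = 1$ — and one applies regularity (a boundary Schauder / Liouville estimate) for that limit problem to get the contradiction. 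The degenerate indices must be carried along in this compactness step, and they require their own barriers: here I would use the specialized sub/supersolution families alluded to in the introduction, comparing $u_k$ against functions that are harmonic in a slightly tilted half-ball and have controlled growth, to show the degenerate $u_k$ cannot spoil the viscosity condition for the nondegenerate ones. Iterating the Harnack improvement at dyadic scales gives that the free boundary is trapped between graphs of slopes converging geometrically, which yields differentiability of $\partial\Omega\cap B_{1/2}$ at $0$ with a $C^{1,\alpha}$ modulus; translating the argument to every boundary point in $B_{1/2}$ (the flatness is inherited at all nearby points and small scales) upgrades this to a $C^{1,\alpha}$ graph.

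The main obstacle I expect is precisely the interaction between the two classes of eigenfunctions, and in particular the possibility that an index \emph{transitions} from degenerate to nondegenerate as one zooms in (the paper notes this can happen, though not the reverse). A single Harnack iteration is not enough, because the set of "active" indices can change from scale to scale; one needs a separate compactness argument to handle the transition, showing that near a scale where $u_k$ switches on, the free boundary is still caught in a thin strip with the right orientation. Making the barriers uniform across this transition — so that the geometric decay of the flatness is not lost at the finitely many transition scales — is the delicate point. The sign-changing nature of the $u_k$ adds a further wrinkle: near a point of $\partial\Omega$ the function $u_k$ could in principle be positive, and one must use that $\Omega$ is on one side (the condition $(0,-1/2)\in\Omega$, $(0,1/2)\notin\Omega$ fixes which side) together with harmonicity to control the sign and rule out oscillation of the free boundary; this is handled by the same barrier families but requires care in their construction so they respect the one-sided geometry.
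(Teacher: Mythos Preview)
Your overall architecture matches the paper's closely --- De Silva iteration, dichotomy between ``linear-growth'' indices $\mathcal{A}$ and ``small'' indices $\mathcal{B}$, linearized Neumann condition $\sum_k \xi_k \alpha_k (\tilde u_k)_n = 0$, and a separate mechanism for handling $\mathcal{B}\to\mathcal{A}$ transitions --- so the strategy is right.

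There is, however, a structural conflation that would cause trouble in execution. You describe the Harnack step (strip $[a,b]$ shrinks to $[a',b']$ with $b'-a' \le (1-\eta)(b-a)$, same direction $e_n$) and then say it is ``proved by a contradiction/compactness argument: rescaling the defect $(u_k - \alpha_k(x_n)^-)/(b-a)$, one extracts a limit \ldots with a Neumann-type condition.'' These are two \emph{different} lemmas in the paper, and the order matters. The Harnack inequality (Lemma~\ref{lem:harnack}) is proved \emph{directly} by sliding explicit barrier families and invoking the viscosity condition at a touching point --- no compactness. The compactness/linearization argument you describe is the improvement-of-flatness step (Theorem~\ref{thm:flatimp}), which produces a new direction $\nu$ and new slopes $\alpha_k'$, and it \emph{requires} the Harnack inequality as input: without it, the rescaled defects $v_k^i$ are merely bounded, their graphs need not converge to graphs of functions, and you cannot extract a H\"older-continuous limit on which to read off the Neumann condition. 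The paper makes this explicit via Corollary~\ref{cor:holderest}, which iterates the Harnack lemma to get a uniform H\"older modulus for $v_k^i$ at scales above $\varepsilon/\varepsilon_1$, and only then runs the compactness.

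A second, smaller point: iterating the Harnack step alone (fixed direction $e_n$) cannot give $C^{1,\alpha}$, because its hypothesis $b-a \le \varepsilon$ degenerates after $O(\log(1/\varepsilon))$ iterations. What is iterated indefinitely is the flatness-improvement step, which updates $\nu$ and keeps the flatness parameter bounded by $\varepsilon_F$ at each new scale. The transition mechanism you anticipate is handled in the paper by a dedicated ``pull-up'' lemma (Lemma~\ref{lem:pullup}): when some $u_k$ in $\mathcal{B}$ first exceeds $\tfrac14\rho\cdot\omega^2 r$ on $B_{\rho r}$, a direct barrier comparison (not compactness) pins down its slope $\alpha_k$ and moves it into $\mathcal{A}$, at the cost of replacing the flatness $\omega$ by $C\sqrt{\omega}$; since this can happen at most $N$ times, the geometric decay survives.
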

We do not assume that any of the $u_k$ have a sign.  The definition of viscosity solution is given in Section \ref{sec:propE}. 

The above theorem assumes a specific geometric configuration, where $\p \W$ is trapped between two parallel hyperplanes a distance $\e$ apart, and moreover the set $\W$ lies to one side of the hyperplanes (and its complement to the other side). That $\p \W$ lies in this thin region is well-known to be a necessary condition for this type of problem (there are straightforward counterexamples when $n=3$, $N=1$, and $u_1$ is nonnegative, see \cite{AC} for discussion). The extra assumption that $\W$ lies to one side of the hyperplanes is also necessary here, as it is not difficult to construct cusp-like solutions with $N=1$ where $\W$ occupies both sides of the hyperplanes, $\p \W\cup \{x\in \W: u_1=0\}$ is the union of two smooth hypersurfaces (each a graph over $\{x_n=0\}$), and $u_1$ is positive to one side of them, negative to the other side, and zero in between. In this case, $\p \W$ could be taken to be the region where the hypersurfaces do not touch, which may have many cusp-like singularities. We do not address the regularity of such a scenario in this work, and these cusps will not occur in the situation of Theorem \ref{thm:shapes}.

As this paper was in the final stages of preparation, we learned of two very recent preprints, \cite{CSY} and \cite{MTV}. In \cite{CSY}, the authors prove a theorem like \ref{thm:fb}, but under the assumption that all of the $u_k$ are positive. In \cite{MTV}, a different group of authors use an argument of the same flavor to prove a theorem like \ref{thm:shapes}, at least for those functionals which are differentiable with respect to domain variations (they focus on $\sum_{k=1}^N \l_k$); they also can prove a version of Theorem \ref{thm:fb} under the assumption that at least one of the $u_k$ is positive. Both of these proofs are very elegant and simple, using the boundary Harnack inequality to reduce the vector problem to an already solved scalar one. As a lone redeeming feature of our longer and more technical argument, we suggest that our method may be useful when considering functionals not involving the first eigenvalue. As a final remark, \cite{MTV} includes much simpler proofs of some of the auxiliary lemmas we prove here (notably, our Lemma \ref{lem:stronglb}). We have elected to retain our arguments, but recommend the reader to look at \cite{MTV} as well.

To aid the reader in comparing these different results, and possible future extensions, we suggest the following schematic about assumptions which may be reasonably placed on $F$ (arrows indicate additional assumptions being made, so the hypotheses get stronger as one moves lower down; double arrows indicate equivalent hypotheses; $\W$ is referring to a minimizer):
\[
\xymatrix@M=0pt{ & \text{(I) } 0\leq \p_{\xi_k} F \leq C \ar[d]\\
	& \text{(II) } 0<c \leq \p_{\xi_k} F \leq C \ar[d]\\
	& \text{(III) } \txt{$F$ is differentiable \\ with respect to \\ domain variations} \ar@{<->}[d]\\
	& \text{(IV) } \txt{$\p_{\xi_k}F \leq \p_{\xi_{k+1}}F$\\  if  $\l_k(\W) = \l_{k+1}(\W)$} \ar[dl] \ar[dr]\\
	\text{(V) }  \txt{$F$ is interchangeable\\ with respect to $\xi_k$} \ar[d] & & \text{(VI) } \txt{$\l_k(\W)<\l_{k+1}(\W)$\\ $\forall k\leq N$}\\
	\text{(VII) } F(\xi_1,\ldots,\xi_N) = \sum_{k=1}^N \xi_k
	}
\]
We assume (II) in Theorem \ref{thm:shapes}; we obtain a better estimate on the singular set if we also assume (VI) (see \ref{thm:sing}). In \cite{MTV}, (VII) is assumed, and their estimate on the singular set is the same as ours under (VI); it is clear that their proof is only using (V). The argument in \cite{MTV}, and our singular set estimate, should really only require (III), although neither work pursues this explicitly. It is our belief that the differences between (III),(II), and (I) are profound, with each step requiring additional arguments at the free boundary and/or shape optimization level, and also entailing genuinely different behavior for the minimizers. Under (III), the free boundary condition can be obtained explicitly in terms of the derivatives of $F$, by performing domain variations; this means that the domain turns out to be an "almost minimizer" of the corresponding free boundary problem. Under (II), we obtain a free boundary condition of the same form, but now without any explicit formula for the coefficients, by an approximation argument; here the domain should be thought of as "stationary" for the corresponding free boundary problem. Our \emph{free boundary} result is stronger in that it does not assume that any of the $(u_k)_\nu$ have a sign, and that suggests an application to (I). We plan on discussing this in future work.

The paper is organized as follows: Section \ref{sec:general} contains various measure-theoretic properties of minimizers, most notably the linear growth of the first eigenfunction. In Section \ref{sec:blowup}, we present a sequence of blow-up arguments that allow us to study the tangent objects at points of the reduced boundary, obtain good starting configurations, and interpret condition \eqref{eq:two} in a useful way. Then in Section \ref{sec:propE}, we show that all minimizers have a free boundary condition by an approximation argument. The proof of the regularity of the reduced boundary occupies us for Sections \ref{sec:harnack}, \ref{sec:flat}, and \ref{sec:iteration}, while in Section \ref{sec:sing} we briefly discuss the size of the singular set.

We will write $F(\W)$ for $F(\l_1(\W),\ldots, \l_N(\W))$, but take partial derivatives with respect to the eigenvalues regardless. The expressions $x^+$ and $x^-$ refer to the positive and negative parts, respectively, with the convention that $x^-\geq 0$. The index $k$ will typically be reserved for enumerating eigenvalues and eigenfunctions.

\section{General Properties}\label{sec:general}

It will be useful for technical reasons to consider the more general variational problem
\[
 F(\W) + \int_\W f +\int_{\R^n\sm \W} g,
\]
where $f$ is a Lipschitz function with $|f-1|\leq \eta$, and $g$ is a Lipschitz function supported on $B_{R_0}$, with $|g|\leq \eta$, for some small but fixed $\eta$ and large but fixed $R_0$. A set $\W$ minimizing the above over all quasiopen sets will be called a \emph{minimizer}. A set $\W$ minimizing the above over all quasiopen competitors $\W'$ with $\W'\triangle \W\ss U$ will be called a \emph{local minimizer on} $U$.

We say a constant is \emph{universal} if it depends only on a bound on $|\n F|\leq \l^{-1}$, a lower bound on $\p_{\xi_k}F \geq \l$, and on $\eta, R_0$. Note that for a minimizer, $F(\W)$ is bounded by a universal quantity (use $B_1$ as a competitor), and so $|\W|,\l_k(\W)$ are as well.  In this section, it will be useful to consider Lipschitz $F$, which satisfy the above bounds in the sense
\[
\l (\l_k - \l_k')\leq F(\l_1,\ldots,\l_k,\ldots,\l_N) - F(\l_1,\ldots, \l_k',\ldots, \l_N)\leq \l^{-1} (\l_k - \l_k')
\]
for any $\l_k>\l_k'$. In subsequent sections, we will only consider $C^1$ functionals $F$.

We claim that the results of \cite{BMPV} are valid for our minimizers (indeed, Theorem 5.6 applies directly to our functional).  In particular, we will use freely that $|\n u_k|\leq C$ for a universal $C$.  We now present a series of lemmas aimed at showing the linear growth of the  eigenfunctions and related properties.

\begin{lemma}\label{lem:lowerbd} Let $\W$ be a local minimizer on $B_r(x)$. Then there are $c_0,r_0>0$ depending only on $\l_N$ such that if $|B_{r/8}(x)\cap \W|>0$  and $r<r_0$, then
\[
 \sup_{B_{r}(x)} \sum_{k=1}^N|u_k| \geq c_0 r.
\]
\end{lemma}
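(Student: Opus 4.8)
The plan is to argue by contradiction and compactness. Suppose the conclusion fails: then there is a sequence of local minimizers $\W_j$ on $B_{r_j}(x_j)$ with $r_j \to 0$ (or at least $r_j < r_0$), with $|B_{r_j/8}(x_j)\cap \W_j| > 0$, but $\sup_{B_{r_j}(x_j)} \sum_k |u_k^{(j)}| = \e_j r_j$ with $\e_j \to 0$. After translating and rescaling by $r_j$ — setting $v_k^{(j)}(y) = \e_j^{-1} r_j^{-1} u_k^{(j)}(x_j + r_j y)$ — we obtain functions on $B_1$ with $\sup_{B_1}\sum_k |v_k^{(j)}| = 1$, which are harmonic on the rescaled sets $\tilde\W_j$ and vanish outside them. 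The key point is that the eigenfunction equation $-\Delta u_k = \l_k(\W_j) u_k$ on $\W_j$ rescales to $-\Delta v_k^{(j)} = r_j^2 \l_k(\W_j) v_k^{(j)}$ on $\tilde\W_j$, and since $\l_k(\W_j)$ is bounded by a universal constant (as noted in the text, using $B_1$ as a competitor) while $r_j \to 0$, the right-hand side tends to zero. Thus any limit $v_k$ of $v_k^{(j)}$ will be harmonic where it is positive.

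The heart of the argument is a measure-theoretic density estimate obtained from the minimality of $\W_j$. First I would establish that $|\tilde\W_j \cap B_{1/8}|$ cannot be too small: if it were, one could use $\W_j \sm B_{r_j/8}(x_j)$ as a competitor. Comparing energies, removing this piece changes $\int_{\W} f + \int_{\W^c} g$ by an amount controlled by $|B_{r_j/8}(x_j)\cap \W_j|$ with a favorable sign (since $f \geq 1-\eta > 0$ and $|g| \leq \eta$), while it can only decrease $F(\W_j)$ — wait, that goes the wrong way, so one must instead control how much $F$ can increase when shrinking the domain. Here one uses that removing a small-measure piece from $\W_j$ perturbs the eigenvalues only slightly; quantitatively, a capacity/measure estimate bounds the increase of each $\l_k$ in terms of (a power of) the measure removed, combined with the $L^\infty$ bound $|u_k| \le \e_j r_j$ on $B_{r_j/8}(x_j)$, which makes the eigenfunction energy supported there genuinely small. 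Balancing this against the gain $\ge (1-\eta)|B_{r_j/8}(x_j)\cap\W_j|$ forces either $|B_{r_j/8}(x_j)\cap\W_j| = 0$ or a lower density bound, and combined with $\sup|u_k|$ small this yields a contradiction. An alternative, cleaner route: show directly that the rescaled limit $v_k$ satisfies a mean value–type inequality forcing $\sum_k |v_k| \equiv 0$ on $B_{1/2}$ (e.g., via the minimality giving a subharmonicity-type property of $\sum_k (v_k)^2$ or of $\sum_k |v_k|$), contradicting $\sup_{B_1}\sum_k|v_k| = 1$ together with an interior gradient/oscillation bound.

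The main obstacle I anticipate is making the eigenvalue perturbation estimate quantitative and uniform: one needs that cutting out a set of small measure (and small eigenfunction energy) from $\W_j$ raises $\l_1, \dots, \l_N$ by only a controlled amount, and doing this requires care because the test functions $u_k \chi_{\W \sm B_{r/8}}$ are not orthonormal and must be re-orthonormalized, introducing error terms that must be bounded by the same small quantities. This is the place where the Lipschitz bound $|\n u_k| \le C$ from \cite{BMPV} and the hypothesis $\sup_{B_r} \sum_k |u_k| \le c_0 r$ (hence $|u_k| \le c_0 r$ on the relevant ball) enter decisively. Once the eigenvalue-stability estimate is in hand with the right power of the measure, the comparison with the competitor $\W_j \sm B_{r_j/8}(x_j)$ closes the argument: the term $\int f \ge (1-\eta)|B_{r_j/8}\cap \W_j|$ dominates, so minimality forces $|B_{r_j/8}(x_j)\cap \W_j| = 0$ unless $\sup_{B_r} \sum_k |u_k|$ was already $\ge c_0 r$, which is the claim.
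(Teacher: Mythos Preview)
Your competitor idea (remove a small ball, estimate the eigenvalue change via cutoff test functions) is exactly the right engine, and the paper uses it too. But both routes you sketch have a genuine gap at the closing step.

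The compactness route cannot be made to work as stated: after dividing by $\e_j r_j$ the rescaled gradient satisfies only $|\n v_k^{(j)}|\le C/\e_j\to\infty$, so there is no uniform bound with which to extract a limit, and interior elliptic estimates do not help near $\p\tilde\W_j$. This is why the paper (following David--Toro) does \emph{not} blow up.

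In the direct route, your final sentence is where the argument breaks. The competitor $\W\sm B_{3r/4}$ together with the eigenvalue perturbation (via $v_k=\phi u_k$, Caccioppoli on $|\n u_k|^2$, and $|u_k|\le c_0 r$) yields only
\[
|\W\cap B_{3r/4}|\le C\,c_0^2\,r^n,
\]
not $|\W\cap B_{r/8}|=0$. Small is not zero, and choosing $c_0$ small does not flip this into a vanishing statement by itself. The missing idea is an \emph{improvement of scale}: feed this measure bound back into the sub-mean-value inequality for $|u_k|$ (recall $\D|u_k|\ge -\l_N|u_k|\ge -c_0\l_N r$), obtaining for every $y\in B_{r/2}$
\[
\sum_k|u_k(y)|\le \fint_{B_{r/4}(y)}\sum_k|u_k| + Cc_0 r^3 \le Cc_0 r\cdot\frac{|\W\cap B_{r/4}(y)|}{r^n}+Cc_0 r^3\le C(c_0^2+r^2)\,c_0 r.
\]
Now choose $c_0,r_0$ so that $C(c_0^2+r_0^2)\le 1/4$; then the sup drops from $c_0 r$ on $B_r$ to $c_0 r/4$ on each $B_{r/4}(z)$, $z\in B_{r/4}$. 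Iterating this at scales $4^{-i}r$ forces $\sum_k|u_k|\equiv 0$ on $B_{r/4}$, contradicting $|B_{r/8}\cap\W|>0$. That iteration is the heart of the argument and is absent from your proposal.
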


This lemma was originally proved by Alt and Caffarelli for the one-phase Bernoulli-type problem \cite{AC}. The simple and more robust argument presented here is taken from David and Toro \cite{DT}.

\begin{proof}
Assume that for some $r<r_0$ (with $r_0,c_0$ to be chosen) we have 
\[
 \sup_{B_{r}(x)} \sum_{k=1}^N|u_k| \leq c_0 r.
\]
Translating, take $x=0$. We will show that for any $z\in B_{1/4}$, we have that
\[
 \sup_{B_{r/4}(z)} \sum_{k=1}^N|u_k| \leq \frac{c_0 r}{4}.
\]
Then applying this claim inductively to balls of radii $4^{-i}r$ centered on each $z\in B_{1/4}$, we learn that $|u_k|$ all vanish on $B_{r/4}$. This is a contradiction.

Let us then establish this claim. First, on $B_r$ we have
\[
 \int_{B_r}|u_k|^2 \leq C c_0^2 r^{n+2}.
\]
Applying the Cacciopoli inequality,
\[
 \int_{B_{9r/10}} |\n u_k|^2 \leq C\l_N^2 c_0^2 r^{n+2} +  Cr^{-2} \int_{B_r}|u_k|^2 \leq C c_0^2 r^n
\]
provided $r_0\leq 1$. The first term is from $\triangle |u_k|\geq -c_0 r \l_N$.  Next, take $\W' = \W \sm B_{3r/4}$ as a competitor for $\W$. Then
\[
 |\W \cap B_{3r/4}| \leq C(F(\W') - F(\W)).
\]
To estimate the latter quantity, take a smooth, radially increasing cutoff function $\phi$ which vanishes on $B_{3r/4}$ and is $1$ outside $B_{9r/10}$, and has derivative bounded by $C r^{-1}$. Let $v_k = \phi u_k$: then
\[
 \int |v_k -u_k|^2 \leq C c_0^2 r^{n+2},
\]
\[
 \int |\n v_k -\n u_k|^2 \leq C c_0^2 r^n,
\]
and
\[
 |\int \n v_k \cdot \n v_j|\leq C c_0^2 r^n.
\]
It follows that
\begin{align*}
 \l_k(\W')& = \inf_{E\ss H^1_0(\W'), \dim E=k} \sup_{v\in E,v\neq 0} \frac{\int |\n v|^2}{\int v^2}\\
& \leq \sup_{v = \sum_{j=1}^k \a_j v_j} \frac{\int |\n v|^2}{\int v^2}\\
& \leq \l_k(\W) + C c_0^2 r^n.
\end{align*}
Using that $|\n F|\leq C$, we have
\[
 |\W \cap B_{3r/4}|\leq C c_0^2 r^n.
\]
Finally, take any $y\in B_{r/2}$ and use that $\triangle |u_k| \geq - c_0 r \l_N$:
\begin{align*}
 \sum_{k=1}^N |u_k(y)|&\leq \sum_{k=1}^N \fint_{B_{r/4}(y)}|u_k| + C c_0 \l_N r^3\\
&\leq C c_0 r\sum_{k=1}^N \frac{|B_{r/4}(y)\cap \W|}{r^n} + C c_0 r^3\\
& \leq C( c_0^2 +r^2)c_0 r.
\end{align*}
Choosing $c_0,r_0$ so that $C(c_0^2 + r_0^2) \leq 1/4$ implies our claim.
\end{proof}

Note that a minimizer has at most $N$ connected components; if it had more, then removing the one with the highest first eigenvalue $\l_1$ (which does not change the eigenvalues $\l_1,\ldots, \l_N$ used to compute the functional, but reduces volume) would lead to a contradiction.

\begin{corollary}\label{cor:diam} Let $\W$ be a minimizer. Then for every $x\in \p \W$ and $r<r_0$, we have 
	\[
	|B_r \cap \W|\geq c_0 r^n.
	\]
	 As a consequence, each connected component $V$ of $\W$ has universally bounded diameter.
\end{corollary}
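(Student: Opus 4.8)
The plan is to prove Corollary \ref{cor:diam} in two stages: first the density lower bound at boundary points, then the diameter bound for connected components as a consequence.

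For the density bound, let $x \in \p \W$ and $r < r_0$. Since $x$ is on the topological boundary of the open set $\W$, every neighborhood of $x$ meets $\W$, so in particular $|B_{r/8}(x) \cap \W| > 0$ (this uses that $\W$ is open — points of $\p\W$ are limits of interior points, and around such interior points $\W$ has positive measure). Applying Lemma \ref{lem:lowerbd} (with $\W$ a local minimizer on $B_r(x)$, which it is since it is a minimizer), we obtain $\sup_{B_r(x)} \sum_k |u_k| \geq c_0 r$. The point is now to convert this sup bound into a measure bound, and here the key input is the universal gradient bound $|\n u_k| \leq C$ from \cite{BMPV}, together with the subharmonicity estimate $\triangle |u_k| \geq -C$ used already in Lemma \ref{lem:lowerbd}. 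Concretely: pick $k$ and $y \in B_r(x)$ with $|u_k(y)| \geq c_0 r / N$. Since $u_k$ vanishes outside $\W$ and is Lipschitz with constant $C$, the set where $|u_k|$ is comparable to $c_0 r$ is contained in $\W$ and, by the Lipschitz bound, contains a ball of radius $\sim c_0 r / C$ around a nearby point — more carefully, using that $|u_k|$ is (almost) subharmonic one estimates $c_0 r \lesssim |u_k(y)| \lesssim \fint_{B_{\rho}(y)} |u_k| + C\rho^2 \lesssim C\rho \frac{|B_\rho(y) \cap \W|}{\rho^n} + C\rho^2$ for $\rho \sim r$, exactly as in the final display of the proof of Lemma \ref{lem:lowerbd}, and rearranging gives $|B_\rho(y) \cap \W| \gtrsim c_0 \rho^n$ once $r_0$ is small. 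Since $B_\rho(y) \subseteq B_{2r}(x)$ with $\rho \sim r$, this yields $|B_{2r}(x) \cap \W| \geq c_0' r^n$, and after adjusting constants (replacing $r$ by $r/2$), $|B_r(x) \cap \W| \geq c_0 r^n$ for a possibly smaller universal $c_0$ and all $r < r_0$.

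For the diameter bound, recall the remark preceding the corollary that a minimizer has at most $N$ connected components. Let $V$ be one such component. The strategy is: cover a maximal separated net of points in $V \cap \p V$ (or simply points of $\p\W$ lying in $\overline V$) by disjoint balls of radius $r_0/2$, apply the density bound at each, and conclude that $V$ cannot be too large since $|\W|$ is universally bounded (as noted in the text, $F(\W) \leq F(B_1)$ forces $|\W|$, $\l_k(\W)$ universal). More precisely, if $\diam V = D$, then one can find roughly $D / r_0$ points $x_i \in \p \W \cap \overline V$ that are pairwise at distance $\geq r_0$ — for instance by walking along a path, or by taking a maximal $r_0$-separated subset of a diameter-realizing chain — so the balls $B_{r_0/2}(x_i)$ are disjoint, and each contributes $|B_{r_0/2}(x_i) \cap \W| \geq c_0 (r_0/2)^n$ to the measure. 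Hence $|\W| \geq (D/r_0) \cdot c_0 (r_0/2)^n$, forcing $D \leq C |\W| r_0^{-1} r_0^{-n} \leq C$ universal.

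The main obstacle is the rigorous extraction of many well-separated boundary points in a single component realizing (a fraction of) its diameter: one needs that a connected open set of large diameter actually has boundary points spread out along that diameter, rather than, say, a long thin tentacle whose boundary is all clustered. The clean way to handle this is to use connectedness directly: if $a, b \in V$ with $|a - b| = D$ large, take a path $\gamma$ in $V$ from $a$ to $b$; for each $j = 0, 1, \ldots, \lfloor D/(4r_0) \rfloor$ the path must exit $B_{4 r_0 j}(a)$, and along the way $\dist(\gamma(t), \p\W)$ cannot exceed, say, $2r_0$ everywhere on a long stretch or else $\W$ would contain a tube of volume exceeding the universal bound on $|\W|$; thus we find points on $\gamma$ within $2r_0$ of $\p\W$, hence boundary points $x_j$, at scales $\sim 4 r_0 j$ from $a$, which are automatically $r_0$-separated. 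Combined with the density estimate and disjointness of $B_{r_0/2}(x_j)$, this closes the argument. Everything else is a routine repackaging of the estimates already established in Lemma \ref{lem:lowerbd}.
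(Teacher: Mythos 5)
Your proof is correct and follows essentially the same route as the paper: Lemma \ref{lem:lowerbd} gives a point $y\in B_{r/2}(x)$ where some $|u_k|$ is of size $c_0 r$, this is converted to a volume lower bound (the paper does it slightly more directly, using the Lipschitz bound to place a whole ball $B_{\a r/2}(y)\ss\W$ rather than the sub-mean-value inequality, but your computation works too), and the diameter bound follows from packing disjoint balls centered at boundary points against the universal bound on $|\W|$. Your path-following construction of $r_0$-separated boundary points within a single component is a detail the paper leaves implicit, and it is handled correctly.
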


\begin{proof}
	Let $r_0$ be as in Lemma \ref{lem:lowerbd}, and use that lemma applied to $B_{r/2}(x)$ to find $y \in B_{r/2}(x)$ and $k$ such that $|u_k|(y) > c_0 r/2$. Together with the Lipschitz property, this means $|u_k|\geq c_0 r /4$ on $B_{\a r/2}(y)$ for some universal $\a<1$, and so in particular $B_{\a r/2}(y)\ss \W$. This implies the first property.
	
	Now assume that for some $K>0$ there is a minimizer which admits $K$ disjoint balls $B_{r_0}(x_i)$ with $x_i \in \p \W$. Then for each of these balls, we have $|\W \cap B_[r_0](x_i)|\geq c_0 r_0^n=c$, and so $|\W|\geq c K$. As the volume of $\W$ is universally bounded, so is $K$, which implies the second property.
\end{proof}

\begin{remark}\label{rem:disconlb}
	It is easy to check that if $\W$ is a disconnected minimizer, the proof in Lemma \ref{lem:lowerbd} can be applied to each connected component $V$, giving
	\[
	\sup_{B_r \cap V}|u_k|\geq c_0 r.
	\] 
	Corollary \ref{cor:diam} may be adapted similarly.
\end{remark}

The following is another classic argument of Alt and Caffarelli:

\begin{lemma}\label{lem:mink} For a minimizer $\W$, the boundary $\p \W$ has universally bounded (upper) Minkowski content: there are universal constants $C,r_0$ such that for all $r\in (0,r_0)$,
 \[
  |\{x:d(x,\p \W)<r\}|\leq C r.
 \]
 Moreover, the local estimate
  \[
  |B_R(x)\cap  \{x:d(x,\p \W)<r\}|\leq C r R^{n-1}.
 \]
 also holds for $x\in \p \W$ and $R<r_0$.
\end{lemma}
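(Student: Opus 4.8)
The plan is to bound the measure of the $r$-neighborhood of $\p\W$ by a dyadic covering argument combined with the nondegeneracy and volume-density estimates already established. Fix $r\in(0,r_0)$. By Vitali, choose a maximal collection of disjoint balls $\{B_{r/2}(x_i)\}$ with $x_i\in\p\W$; then the balls $\{B_r(x_i)\}$ cover $\p\W$, and consequently $\{B_{2r}(x_i)\}$ cover the $r$-neighborhood $\{d(\cdot,\p\W)<r\}$. Since $|\{d(\cdot,\p\W)<r\}|\leq \sum_i |B_{2r}(x_i)| = C\,r^n\,\#\{i\}$, it suffices to show that the number of balls is at most $C r^{-(n-1)}$ (for the global statement, using that $\W$ is contained in a universally bounded region by Corollary \ref{cor:diam}, or at least that its boundary is; for the local statement, that at most $C(R/r)^{n-1}$ of them meet $B_R(x)$). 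The issue, of course, is that a naive volume lower bound gives $|B_{r/2}(x_i)\cap\W|\geq c_0(r/2)^n$, hence $\#\{i\}\leq C r^{-n}|\W|$, which is one power of $r$ too weak.

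To recover the missing power, I would instead estimate volume \emph{and} energy simultaneously, following Alt--Caffarelli. For each $i$, take $\W_i' = \W\sm B_{r/2}(x_i)$ as a competitor. As in the proof of Lemma \ref{lem:lowerbd}, cutting off the eigenfunctions $u_k$ on $B_{r/2}(x_i)$ with a cutoff $\phi_i$ supported outside $B_{r/4}(x_i)$ and comparing Rayleigh quotients gives
\[
\l_k(\W_i') \leq \l_k(\W) + C\Big(\int_{B_{r/2}(x_i)}|\n u_k|^2 + r^{-2}\int_{B_{r/2}(x_i)}|u_k|^2\Big),
\]
and since $u_k$ is Lipschitz with $|u_k|\leq Cr$ on $B_{r/2}(x_i)$ (by nondegeneracy and the Lipschitz bound, together with the fact that $|u_k|$ vanishes off $\W$), the correction term is bounded by $C r^{n-2}\cdot\int_{B_{r/2}(x_i)\cap\W}1 + C r^{-2}\cdot r^2\cdot r^{n-2}|B_{r/2}(x_i)\cap\W|\leq C r^{-2}|B_{r/2}(x_i)\cap\W|$. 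Wait --- more carefully: $\int_{B_{r/2}(x_i)}|\n u_k|^2\leq C|B_{r/2}(x_i)\cap\W|$ since $|\n u_k|\leq C$ and $\n u_k=0$ off $\W$, and $r^{-2}\int|u_k|^2\leq C r^{-2}(Cr)^2|B_{r/2}(x_i)\cap\W| = C|B_{r/2}(x_i)\cap\W|$. So in fact $\l_k(\W_i')\leq \l_k(\W)+C|B_{r/2}(x_i)\cap\W|$. Using $|\n F|\leq C$ and minimality, $|B_{r/2}(x_i)\cap\W| \leq C\big(F(\W_i')-F(\W)\big) \leq C|B_{r/2}(x_i)\cap\W|$, which is circular and gives nothing new on a single ball; the point is that the constant is universal and $<1$ only for small volumes.

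The correct extraction of the extra power is the following trick, also due to Alt--Caffarelli. Summing the competitor estimate over a disjoint subfamily: let $\W' = \W\sm\bigcup_i B_{r/2}(x_i)$ all at once. Each cutoff lives in a disjoint region, so the Rayleigh-quotient comparison yields $\l_k(\W')\leq \l_k(\W) + C\sum_i|B_{r/2}(x_i)\cap\W|$, hence by minimality $\sum_i|B_{r/2}(x_i)\cap\W|\leq C\sum_i|B_{r/2}(x_i)\cap\W|$ --- still circular. The genuine gain comes from the \emph{boundary} term in the energy: because $x_i\in\p\W$, the nondegeneracy (Lemma \ref{lem:lowerbd} applied at scale $r$) forces $\int_{B_{r/2}(x_i)}|\n u_k|^2\geq c\,r^{n}$ for \emph{some} $k$ depending on $i$ (integrate $|\n u_k|\geq c$ on the set $B_{\a r/2}(y_i)\ss\W$ produced in Corollary \ref{cor:diam}). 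Therefore $\sum_i r^n \leq C\sum_k\int_{\W}|\n u_k|^2\leq C\sum_k\l_k(\W)\|u_k\|_{L^2}^2\leq C$, which already gives $\#\{i\}\leq Cr^{-n}$ --- again only the weak bound. To get $r^{-(n-1)}$ one must localize: the balls $B_{r/2}(x_i)$ meeting a fixed $B_R(x)$ all lie in $B_{R+r}(x)$, and within that ball $\sum_i|B_{r/2}(x_i)\cap\W| \leq |B_{R+r}(x)\cap\W|$, combined with the observation that the \emph{one-sided} density estimate $|B_{r/2}(x_i)\setminus\W|\geq c r^n$ (which holds by symmetry of the argument: run Lemma \ref{lem:lowerbd}-type reasoning, or note $B_{r/2}(x_i)$ contains boundary points hence interior complement points of definite size) lets one bound $\#\{i\}$ by comparing $\sum_i c r^n \leq$ (measure of a slightly larger neighborhood of $\p\W$). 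This is exactly the self-improving estimate: setting $m(r) = |\{d(\cdot,\p\W)<r\}\cap B_R(x)|$, the above produces $c r^n\,\#\{i\}\leq m(3r)$ while also $m(r)\leq C r^n\#\{i\}$, hence $m(r)\leq C m(3r)$; iterating and using $m(r_0)\leq C R^{n-1}r_0$ (crude bound) or rather running the scale-invariant density estimates directly yields $m(r)\leq C r R^{n-1}$.

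The main obstacle, and the step I would spend the most care on, is precisely this extraction of the sharp power $r^{n-1}$ rather than $r^n$: one genuinely needs \emph{both} a volume lower bound $|B_{r/2}(x_i)\cap\W|\geq cr^n$ and a volume lower bound $|B_{r/2}(x_i)\setminus\W|\geq cr^n$ for $x_i\in\p\W$, plus the packing/localization bookkeeping that turns disjointness of the $B_{r/2}(x_i)$ inside $B_R(x)$ into the count $\#\{i: B_{r/2}(x_i)\cap B_R(x)\neq\emptyset\}\leq C(R/r)^{n-1}$. The second density bound is not stated as a separate lemma above, so I would prove it by the same competitor argument run in reverse (add the ball to $\W$), or simply note that a point of $\p\W$ has complement density bounded below because otherwise $u_k$ would, by Lebesgue-density and Lipschitz continuity, be forced to extend across the ball making $x_i$ interior. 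Everything else --- the Vitali covering, the Caccioppoli estimate, the Rayleigh-quotient comparison with cutoffs --- is routine and already appears in the proof of Lemma \ref{lem:lowerbd}.
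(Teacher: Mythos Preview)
Your proposal has a genuine gap: you never actually extract the sharp power $r^{n-1}$. Each of your attempts is, as you yourself note, circular, and the ``self-improving'' inequality $m(r)\leq C\,m(3r)$ is not self-improving at all --- it goes the wrong way (the right-hand side is larger), so iterating it gives nothing. The covering-plus-two-sided-density route you sketch at the end \emph{would} work if you had both $|B_{r/2}(x_i)\cap\W|\geq cr^n$ and $|B_{r/2}(x_i)\setminus\W|\geq cr^n$; but in the paper's logical order the outer density bound is Lemma~\ref{lem:stronglb}, proved \emph{after} Lemma~\ref{lem:mink} and in fact \emph{using} Lemma~\ref{lem:mink} (the Cabr\'e maximum principle step needs $|V_{r_0}|\leq Cr_0$). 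So invoking it here would be circular. Your suggestion to prove the outer density independently by ``adding a ball'' runs into the difficulty that the harmonic-extension competitor argument needs a single nonnegative eigenfunction with linear growth at the boundary point, which is exactly what Lemma~\ref{lem:stronglb} supplies and which is not yet available.

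The paper takes a completely different route. It introduces the nonnegative Radon measure
\[
\nu \;=\; \sum_{k=1}^N \bigl(\triangle |u_k| + \l_k |u_k|\,d\cL^n\bigr),
\]
supported on $\p\W$, and proves directly that $\nu$ is Ahlfors $(n-1)$-regular: $c\,r^{n-1}\leq \nu(B_r(x))\leq C\,r^{n-1}$ for $x\in\p\W$. The upper bound is an easy integration against a cutoff using only the Lipschitz estimate on $u_k$. The lower bound is the substantive step: from the nondegeneracy Lemma~\ref{lem:lowerbd} one finds a point where some $|u_k|\geq c_0 r$, and then a Green's-function / Hopf-lemma computation on a ball tangent to $\{u_k=0\}$ forces $\nu(B_r)\geq cr^{n-1}$. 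Once the Ahlfors regularity of $\nu$ is in hand, a Vitali cover $\{B_r(x_i)\}$ of $\p\W$ with bounded overlap gives $M_r\, r^{n-1}\leq C\,\nu(V_r)$ and $\nu(V_r)\leq C M_r\, r^{n-1}$; bounding $\nu(V_{r_0})$ via the diameter estimate (Corollary~\ref{cor:diam}) then controls $M_r$ and hence $|V_r|\leq C M_r r^n \leq Cr$. The missing idea in your approach is precisely this auxiliary measure and its lower Ahlfors bound, which replaces the outer density estimate you do not yet have.
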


Note that the upper Minkowski content controls $\cH^{n-1}$, so the local bound implies 
\[
\cH^{n-1}(B_R(x))\leq CR^{n-1}.
\]

\begin{proof}

The functions $|u_k|$ are Lipschitz, and satisfy $\triangle |u_k| \geq - \l_k |u_k|$ in the distributional sense. In particular, $\triangle  |u_k| + \l_k |u_k| d\cL^n$ is a nonnegative Borel measure supported on $\p\{|u_k|>0\}$. Set $\nu$ to be the measure
 \[
  \nu = \sum_{k=1}^N \triangle |u_k| + \l_k |u_k|d\cL^n.
 \]
We will now show that for any $x\in \p \W$ and $r<r_0$, we have
\[
 c r^{n-1} \leq \nu(B_r(x)) \leq C r^{n-1}.
\]

We may set $x=0$. For the upper bound, we estimate (for $\phi(x) = (2-|x|/r)_+$):
\[
 \nu(B_r) \leq  \sum_{k=1}^N \int \phi d\triangle |u_k| + \int \l_k \phi |u_k| = - \sum_{k=1}^N \int \n\phi \cdot \n |u_k| + \l_k |u_k|\phi\leq C r^{n-1},
\]
using that $|\n |u_k||= |\n u_k|\leq C$ almost everywhere.

We now show the lower bound. For some $y\in B_{r/4}$ and $k$, $|u_k(y)|\geq c_0 r$ for a universal $c_0$. Set $s=d(y,\p \{u_k=0\})\in (c r, r/4]$ and find $\xi$ with $|\xi-y|=s$ and $u_k(\xi)=0$. By the Hopf lemma and the Lipschitz estimate, we have that $c\min\{|\xi-z|,s-|y-z|\}\leq |u_k(z)|\leq C |\xi -z|$ for $z\in B_s(y)$. Let $G_z$ be the positive Green's function for $B_{\r}(\xi)$ with pole at $z$, with $z\in B_{s}(y)$. As $|u_k|$ is positive there, we have for almost every $\r\in (s/2,s)$,
\[
 \int_{B_\r(\xi)} G_z [d\triangle |u_k| +\l_k |u_k|d\cL^n] = -u(z) + \l_k \int_{B_\r(\xi)}G_z |u_k| - \int_{\p B_\r(\xi)} u \p_r G_z d\cH^{n-1}.   
\]
Fixing such a $\r$, select a $z$ with $c\k r\leq |u_k|(z) \leq C \k r$ for a small $\k$. Then $-\p_r G_z \geq c/r^{n-1}$, and so
\[
 -u(z) + \l_k \int_{B_\r(\xi)}G_z |u_k| - \int_{\p B_\r(\xi)} u \p_r G_z d\cH^{n-1}  
\geq - C\k r + \frac{c}{r^{n-1}} \int_{\p B_\r(\xi)} |u_k|d\cH^{n-1} \geq c r 
\]
by choosing $\k$ small enough. We used here that $u\geq c\r$ on a region of area $c\r^{n-1}$ of $\p B_\r(\xi)$ for any $\r$.

On the other hand, the measure $\triangle |u_k| +\l_k |u_k|d\cL^n$ is supported outside of $\{u_k\neq 0\}$, and hence outside of a ball $B_{c \k r}(z)$. Outside of this ball $G_z \leq C (\k r)^{2-n} $, and hence
\[
 \int_{B_\r(\xi)} G_z [d\triangle |u_k| +\l_k |u_k|d\cL^n] \leq C r^{2-n} \int_{B_\r(\xi)\sm B_{c\k \r}(z)} [d\triangle |u_k| +\l_k |u_k|d\cL^n]\leq Cr^{2-n}\nu (B_r).
\]
Combining, this gives the lower bound.

We now show the conclusion of the lemma. Set $V_r = \{x: d(x,\p \W)<r\}$. Take a cover of $\p \W$ by balls $B_{r}(x_i)$, with $r<r_0$ fixed, where no more than $C(n)$ balls overlap. Then if there are $M_r$ balls in this cover, 
\[
 M_r r^{n-1}\leq C \sum_{i=1}^{M_r} \nu(B_r(x_i)) \leq C(n)\nu(\bigcup_i B_r(x_i))\leq C(n) \nu (V_r).
\]
On the other hand, $V_r \ss \cup_i B_{2r}(x_i)$, so
\[
 \nu (V_r)\leq \sum_{i=1}^{M_r} \nu(B_{2r}(x_i)) \leq C M_r r^{n-1}.
\]
Applying this second estimate with $r_0$, and using that $M_{r_0}$ is universally bounded from Corollary \ref{cor:diam}, we have $M_r r^{n-1}\leq C$. Finally,
\[
 |V_r|\leq \sum_{i=1}^M |B_{2r}(x_i)| \leq C M_r r^n \leq C r
\]
for any $r<r_0$.

The local bound may be proved in the same way, now using $\nu(B_R\cap V_r)\leq CR^{n-1}$ for the upper estimate.
\end{proof}

In the following lemma, a priori the constants may depend on non-universal quantities. It should be thought of, at first, as a qualitative property. We will later show that for $\eta$ small, the constants are actually universal. 

Let
\[
 \bar{u}=\sum_{i=1}^N |u_k|;
\]
this is a Lipschitz function with $c d(x,\p \W) \leq \bar{u} \leq C d(x,\p \W)$, and $-\triangle \bar{u}\leq \l_N \bar{u}$. If a minimizer $\W$ has several connected components, the first eigenvalue of each of them must number among $\l_1(\W), \ldots, \l_N(\W)$, and we may select a basis of eigenfunctions so that for each component $V$, one of the $u_k$ is nonnegative, supported on $V$, and is the first eigenfunction of $V$ (extended by $0$). This special eigenfunction will be denoted $u_V$, with eigenvalue $\l_V$.

\begin{lemma}\label{lem:stronglb} There is a universal constant $r_0$ such that the following holds: Let $V$ be a connected component of a minimizer $\W$ and $u_V$ be the special eigenfunction as above. Then  the value
\[
A_V=\frac{r_0}{\inf_{x\in V: \bar{u}=r_0}u_V}<\8,
\]
and there is a constant $A=A(A_V)$ such that $\bar{u}\leq A u_V$ on $V$. As a consequence, for any $x\in \p V$ and $r<r_0$,
\[
 \d<\frac{|B_r(x)\cap \W|}{|B_r|}<1-\d
\]
and
\[
 \d < \frac{\cH^{n-1}(B_r(x)\cap \p \W)}{\cH^{n-1}(\p B_r)} <\d^{-1}.
\]
Here $\d=\d(\max_{V} A_V)$.
\end{lemma}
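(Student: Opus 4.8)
The plan is to compare $u_V$ with $\bar u$ via the boundary Harnack principle. First I would fix $r_0$ from the earlier lemmas (Lemma~\ref{lem:lowerbd}, Lemma~\ref{lem:mink}), and note that $u_V$ is a nonnegative function on $V$, harmonic where positive, and that $\bar u$ is a nonnegative Lipschitz function with $cd(x,\p\W)\leq \bar u\leq Cd(x,\p\W)$ satisfying $-\triangle\bar u\leq\l_N\bar u$. Since $V$ has universally bounded diameter (Corollary~\ref{cor:diam}) and, by Lemma~\ref{lem:lowerbd}/Remark~\ref{rem:disconlb}, $\sup_{B_r\cap V}u_V\geq \sup_{B_r\cap V}\bar u$ up to a dimensional factor (both grow linearly), the level set $\{x\in V:\bar u=r_0\}$ is nonempty and, by the Hopf lemma applied to $u_V$ on interior balls, $u_V$ is bounded below by a positive constant there; this gives $A_V<\infty$. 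The finiteness is, at this stage, only a qualitative statement with a possibly non-universal constant, which is exactly what the lemma claims.

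The main step is the inequality $\bar u\leq Au_V$ on $V$. I would first establish it on the ``bulk'' region $\{x\in V:\bar u\geq r_0\}$: there $u_V\geq A_V^{-1}r_0$ by definition and $\bar u\leq C\diam(V)\leq C$, so the bound holds with a constant depending only on $A_V$ and universal quantities. Near $\p V$ I would run a covering/iteration argument: for $x\in\p V$, on $B_r(x)\cap V$ both functions are comparable to a positive harmonic (or nearly harmonic) function vanishing on $\p V$, so by the boundary Harnack inequality for NTA-type domains (applicable because, by the first density conclusion of Corollary~\ref{cor:diam}, $\W$ satisfies a corkscrew condition from inside) one gets $\bar u(x)/u_V(x)\leq C\,\bar u(y)/u_V(y)$ for a comparison point $y$ at scale $r$; chaining these from $\p V$ inward to the bulk region yields the global bound. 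One must handle the fact that $\bar u$ is not exactly harmonic but only satisfies $-\triangle\bar u\leq\l_N\bar u$ with $\bar u$ Lipschitz: this is absorbed by comparing with the solution of $\triangle w=-\l_N\bar u$ on each ball and noting the correction is lower-order (of size $Cr^2$) relative to the linear growth $cr$, for $r<r_0$ small.

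Finally, the two density statements are corollaries of $\bar u\leq Au_V$. For the measure-density bound: the upper bound $|B_r(x)\cap\W|/|B_r|<1-\d$ follows because $\bar u\geq cd(\cdot,\p\W)$ forces points deep in $\W$ to have large $\bar u$, hence large $u_V$, but a mean-value/energy estimate on $u_V$ (as in Lemma~\ref{lem:lowerbd}, using $\triangle|u_k|\geq -\l_k|u_k|$ and the minimality-derived volume bound $|\W\cap B_{3r/4}|\leq Cc_0^2r^n$ argument run in reverse) caps how much of $B_r(x)$ can lie in $\W$; the lower bound $\d<|B_r(x)\cap\W|/|B_r|$ is precisely the first conclusion of Corollary~\ref{cor:diam}. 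The surface-density bounds then follow from Lemma~\ref{lem:mink} together with the two-sided volume density via the relative isoperimetric inequality (lower bound on $\cH^{n-1}(B_r\cap\p\W)$) and the Minkowski-content estimate (upper bound). The constant $\d$ depends on $\max_V A_V$ through the constant $A$ in $\bar u\leq Au_V$.

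The hard part will be the boundary Harnack step: one is comparing a genuine eigenfunction $u_V$ against the synthetic function $\bar u=\sum|u_k|$, whose components may change sign and whose positivity set is exactly $V$ only after the special choice of basis, so one must be careful that $\bar u$ does not vanish at interior points of $V$ (it does not, since $d(\cdot,\p\W)>0$ there) and that the NTA/corkscrew geometry of $\W$ — which at this stage is only known through the one-sided density estimate of Corollary~\ref{cor:diam}, not yet a two-sided one — suffices to invoke boundary Harnack. Getting the comparison to propagate all the way to $\p V$ with a constant depending only on $A_V$, rather than on the (a priori non-universal) geometry, is the crux; the later sections upgrade $\eta$ small to make everything universal.
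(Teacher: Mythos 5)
Your outline gets the bookkeeping right (finiteness of $A_V$, the lower volume density from Corollary \ref{cor:diam}, the surface density bounds from Lemma \ref{lem:mink} plus the relative isoperimetric inequality), but the two substantive steps have genuine gaps.

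First, the core inequality $\bar u\leq Au_V$. You propose to chain the boundary Harnack principle from $\p V$ into the bulk. Boundary Harnack in the form you need requires NTA-type geometry: an \emph{exterior} corkscrew (or at least a uniform capacity density condition on the complement) and a Harnack chain condition, not just the interior density estimate of Corollary \ref{cor:diam}. Neither is available at this stage; in fact the exterior density estimate $|B_r(x)\setminus\W|\geq\d|B_r|$ is one of the \emph{conclusions} of this lemma and is derived in the paper \emph{from} $\bar u\leq Au_V$, so your route is circular on exactly the point you yourself identify as ``the crux.'' The paper avoids boundary Harnack entirely: it sets $v_0=A_Vu_V-\bar u$, notes $-\triangle v_0\geq -Cr_0$ on the thin level set $V_{r_0}=\{0<\bar u<r_0\}$ and $v_0\geq 0$ on $\p V_{r_0}$, and applies the Cabr\'e (ABP-type) maximum principle, whose error term $Cr_0|V_{r_0}|^{1/n}\leq C_*r_0^{1+1/n}$ is superlinear precisely because $|V_{r_0}|\leq Cr_0$ by the Minkowski content bound of Lemma \ref{lem:mink}. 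Iterating over dyadic shells $V_{2^{1-k}r_0}\setminus V_{2^{-k}r_0}$ with slowly growing constants $A_k\leq CA_V$ propagates the comparison down to $\p V$. This uses only the measure of the level sets, no connectivity or exterior geometry.

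Second, the upper volume density bound. Your sketch (``large $u_V$ deep in $\W$ plus a mean-value/energy estimate caps how much of $B_r$ lies in $\W$'') does not identify a mechanism: nothing about $u_V$ being large inside $\W$ bounds $|B_r\cap\W|$ from above. The paper's argument is an outward competitor argument: take $\W'=\W\cup B_s$, replace $u_{V_*}$ by its harmonic extension $h$ in $B_s$, and use minimality to get $\int_{B_s}|\n u_{V_*}|^2-|\n h|^2\leq C|B_s\setminus\W|+Cs^{n+2}$; then the Poincar\'e inequality converts the left side into $s^{-2}\int|u_{V_*}-h|^2$, and the linear growth $u_{V_*}\geq cA_{V_*}^{-1}r$ somewhere in $B_r$ (available only once $\bar u\leq Au_V$ is proved) forces $h$ to exceed $u_{V_*}$ by $\sim r$ on a ball of radius $\k r$, giving $|B_r\setminus\W|\geq cr^n$. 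You should supply this (or an equivalent) competitor argument; as written the step is missing.
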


\begin{proof}
 First, we note that $\max u_V$ is bounded from below by a universal constant $c_*$. Indeed,
 \[
  1 = \int_{V} u_V^2 \leq |V|\max u_V^2 \leq C\max u_V^2.
 \]
For an $r_0$ to be determined shortly, let 
\[
A_V=\frac{r_0}{\inf_{x\in V: \bar{u}=r_0}u_V}. 
\]
Provided $r_0< c_*$, this is a well-defined quantity. Set $v_0 = A_V u_V - \bar{u}$; this function satisfies $-\triangle v_0 \geq A_V \l_V u_V - \l_N |u|$. Finally, let $V_r = \{x\in V: 0<\bar{u}(x)<r\}$, which has volume $|V_r|\leq Cr$ by Lemma \ref{lem:mink}.

On $\p V_{r_0}$, $v_0\geq 0$ by construction. Moreover, on $V_{r_0}$ we have $-\triangle v \geq -C r_0$. Applying the Cabr\'{e} maximum principle \cite{Cabre}, this gives
\[
 -\inf_{V_{r_0}} v_0 \leq C r_0 |V_{r_0}|^{1/n} \leq C_* r_0^{1+1/n},
\]
where $C_*$ is a universal constant. In particular, on $V_{r_0}\sm V_{r_0/2}$ we have
\[
 \frac{r_0}{2}\leq \bar{u} \leq A_V u_V + C_* r_0^{1+1/n}.
\]
Choose $r_0$ so that $r_0< (4C_*)^{1/n}$ and set
\[
 A_k = \frac{A_{k-1}}{1-2C_* \1\frac{r_0}{2^{k-1}}\2^{\frac{1}{n}}}.
\]
Defining $v_k = A_k u_V -\bar{u}$, we have just shown that if $v_{k-1}\geq 0$ on $\{\bar{u}=2^{1-k} r_0\}$, then $v_k \geq 0$ on $V_{2^{1-k}r_0}\sm V_{2^{-k}r_0}$. Now,
\[
 \log A_k = \log A_V - \sum_{j=1}^k \log (1-2C_* \1\frac{r_0}{2^{k-1}}\2^{\frac{1}{n}}) \leq \log A_V + C\sum_{j=1}^k 2^{-j/n}\leq C+\log A_V,
\]
so $A_k\leq C A_V$ for all $k$. We thus have
\[
 \bar{u}\leq A_k u_V \leq CA_V u_V
\]
on $V_{2^{1-k}r_0}\sm V_{2^{-k}r_0}$, and hence the whole of $V_{r_0}$.

We now discuss the remaining conclusions, setting $x=0$. The lower bound on $|B_r \cap \W|/|B_r|$ was already shown earlier in Corollary \ref{cor:diam}. For the upper bound, we use $\W' = \W \cup B_s$ as a competitor for $\W$, with $s<r$ to be chosen shortly. As $\W\ss \W'$, we have $\l_k(\W')\leq \l_k(\W)$ for every $k$. Now, among all of the components $V$ which intersect $B_s$, select $V_*$ to be the one with the lowest first eigenvalue $\l_1 (V)$. Let $h$ be a function with the same trace as $u_{V_*}$ on $\p B_s$ and harmonic on $B_s$, and $w$ be the $H^1$ function given by
\[
w = \begin{cases}
	u_{V_*} & x\in V_*\sm B_s\\
	h & x\in B_s\\
	0 & \text{otherwise}.
\end{cases}
\]
Using $w$ as a competitor in the minimization formula for the first eigenvalue of the connected component of $\W'$ which contains $B_s$ (denoted by $\l'$), we see that
\[
\l' \leq  \l_{V_*}  - [ \int_{B_s}|\n u_{V_*}|^2 - |\n h|^2 ] + Cs^{n+2},
\]
where the final term is from bounding the error in normalization. It then follows from the minimality of $\W$ that
\[
 \int_{B_s}|\n u_{V_*}|^2 - |\n h|^2   \leq C|B_s\sm \W| + Cs^{n+2}.
\]
 Using the Poincar\'{e} inequality,
\[
 \frac{1}{s^2}\int_{B_s}|u_V - h|^2 \leq  \int_{B_s}|\n u_V - \n h|^2 \leq C|B_s\sm \W| +Cs^{n+2}.
\]
Now, there is a point $z$ in $B_r$ where $u_V\geq c A_{V_*}^{-1} r$ (this is from the just-established claim and Remark \ref{rem:disconlb}); we then must have $|z|=s\geq cr$ and $\int_{\p B_s}u_V \geq c r^n$. Then $h(0)\geq cr$, and indeed $h\geq c/2 r$ on a small ball $B_{\k r}$. By contrast, $u_\leq C\k r$ on that ball, and so
\[
 \k^n r^{n-2} (\frac{c}{2} -\k C)^2 r^2 \leq C|B_r\sm \W| + Cr^{n+2}.
\]
Choosing $\k$ so that the term in parentheses is positive and then $r<r_0$ small enough, this gives the bound.

The upper bound on $\cH^{n-1}(B_r\cap \p \W)$ was already proved in Lemma \ref{lem:mink}, and the lower bound is now a consequence of the relative isoperimetric inequality.
\end{proof}

One helpful consequence of the above lemma is that it implies that $\cH^{n-1}(\p \W\sm \p^*\W)=0$, where $\p^*\W$ is the reduced boundary. From the argument in Lemma \ref{lem:mink}, we know that $\nu_k=-\triangle u_k - \l_k u_k$ is a measure supported on $\p \W$ with $|\nu_k (B_r)(x)|\leq Cr^{n-1}$ for all $x\in \p \W$ and $r$ small enough. We now see that this implies that $\nu_k\ll \cH^{n-1}\mres \p \W$, and from the Radon-Nikodym theorem and this estimate
\[
\nu_k = (u_k)_\nu d\cH^{n-1}\mres \p \W
\]
for some bounded Borel function $(u_k)_\nu$.

We emphasized in the above proof that all constants depend only on universal quantities and the values $A_V$. Whether $A_V$ can be taken to be universal is unclear, and is related to the question of whether there may be disconnected minimizers of some functional $F$ satisfying our hypotheses. However, at least when $\eta$ is small enough, $A_V$ is a universal quantity. This is the aim of the next series of lemmas. 

\begin{lemma} \label{lem:connected} Let $\eta=0$ and $\W$ be a minimizer. Then $\W$ is connected, and $A_\W$ is universally bounded.
\end{lemma}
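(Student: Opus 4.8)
The plan is to prove the two assertions in turn: first that $\W$ is connected, and then, using connectedness, that $A_\W$ is bounded by a universal constant. For the connectedness I will argue by contradiction, assuming $\W=V_1\sqcup\cdots\sqcup V_m$ with $m\geq 2$. As preliminary rigidity I would dilate the components one at a time: the sets $\W_{t_1,\ldots,t_m}=\bigsqcup_i t_iV_i$ are admissible competitors, and since $\eta=0$ the volume term is \emph{exactly} $\sum_i t_i^n|V_i|$ while the used eigenvalues scale as $t_i^{-2}$, so writing the one-sided first-order conditions at $(1,\ldots,1)$ and using $\p_{\xi_k}F\in[\l,\l^{-1}]$ gives $\tfrac{2\l}{n}\sum_{j\le p_i}\mu_j(V_i)\le|V_i|\le\tfrac{2}{n\l}\sum_{j\le p_i}\mu_j(V_i)$, where $\mu_j(V_i)$ are the eigenvalues of $V_i$ that enter the functional. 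Combined with Faber--Krahn and the universal bounds on $|\W|$ and $\l_N(\W)$, this already yields $|V_i|\ge c$ universal and hence $m\le C$ universal.

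The heart of the argument is to build a \emph{connected} competitor with strictly smaller energy. Fix the component $V_1$ with $\l_1(V_1)=\l_1(\W)$ and any other component $V_2$. Since $\cH^{n-1}(\p\W\sm\p^*\W)=0$, I can pick $p_1\in\p^*V_1$ and $p_2\in\p^*V_2$ with opposite outward normals and with $\p_\nu\phi_1^{V_1}(p_1)\neq0$, where $\phi_1^{V_1}$ is the first eigenfunction of $V_1$; translating $V_2$ by $p_1-p_2$ (and, if needed, an infinitesimal further push along the normal) produces $\W_{\mathrm{touch}}$ with the same volume and spectrum as $\W$ but with $\overline{V_1}$ and the translate of $\overline{V_2}$ meeting near $p_1$ with complementary tangent half-spaces. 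Gluing in a small ball $B_s(p_1)$ gives a connected $\W'$. Because the two boundaries are $C^1$ near $p_1$ with opposite normals, $B_s(p_1)$ lies almost entirely in $V_1\cup(V_2+\vec w)$, so $|\W'|-|\W|=o(s^n)$; meanwhile domain monotonicity gives $\l_k(\W')\le\l_k(\W)$ for all $k$, and the Hopf lemma via Green's identity on $V_1$,
\[
(\l_1(V_1)-\l_1(\W'))\int_{V_1}\psi\,\phi_1^{V_1}=\int_{\p V_1\cap B_s(p_1)}\psi\,|\p_\nu\phi_1^{V_1}|\,d\cH^{n-1}>0
\]
($\psi$ the positive first eigenfunction of $\W'$), shows that $\l_1(\W')<\l_1(\W)$ by at least a fixed power of $s$ that beats $o(s^n)$ — one bounds $\psi$ from below on $\p V_1\cap B_s(p_1)$ by a Harnack chain inside $\W'$ back to the bulk of $V_1$, where $\psi$ is close to $\phi_1^{V_1}$ and hence bounded below. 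Since $\p_{\xi_1}F\ge\l>0$, the decrease of $\l_1$ lowers $F$ by more than the gain $o(s^n)$ in volume, contradicting minimality; iterating over pairs forces $m=1$. I expect the genuinely delicate point to be exactly this quantitative comparison — making the Hopf gain dominate the volume cost, which requires a careful choice of $p_1$, of the bridging scale, and of the exact shape of the bridge, together with some care about what regularity of $\p V_1$ near $p_1$ is available at this stage.

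For the bound on $A_\W$, with $\W$ now known to be connected I would argue by compactness. If the bound failed there would be minimizers $\W_j$ (all with $\eta=0$) and points $x_j\in\W_j$ with $\bar u_j(x_j)=r_0$ but $u_{1,j}(x_j)\to0$, so $A_{\W_j}\to\infty$. Translating so $x_j=0$, the condition $\bar u_j(0)=r_0$ forces $B_{cr_0}(0)\ss\W_j$, so on this fixed ball the (uniformly Lipschitz, by \cite{BMPV}) eigenfunctions converge along a subsequence to $u_{k,\infty}$ with $\bar u_\infty(0)=r_0$, $u_{1,\infty}\ge0$, $u_{1,\infty}(0)=0$ and $-\triangle u_{1,\infty}=\l_{1,\infty}u_{1,\infty}$ there, whence the strong maximum principle gives $u_{1,\infty}\equiv0$ on $B_{cr_0}(0)$. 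But the $\W_j$ have uniformly bounded diameter (Corollary \ref{cor:diam}), so up to translation they sit in a fixed ball; the standard compactness of this shape optimization problem (as in \cite{Bucur,MP,BMPV}) produces a limit minimizer $\W_\infty\supset B_{cr_0}(0)$ with $\|u_{1,\infty}\|_{L^2(\W_\infty)}=1$, and by the connectedness proved above $\W_\infty$ is connected, so its first eigenfunction $u_{1,\infty}$ is strictly positive on $\W_\infty$ — contradicting $u_{1,\infty}\equiv0$ on $B_{cr_0}(0)$. Hence $A_\W\le C$ universally. The main technical burden of this last step is setting up the convergence of domains and of spectra for a sequence of minimizers, which is essentially already available in the cited works.
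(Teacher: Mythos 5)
Your compactness argument for the bound on $A_\W$ is essentially the paper's own proof of that half of the lemma: a sequence of minimizers with $A_{\W_j}\to\8$ is compactified to a limit minimizer, which is connected and hence has strictly positive first eigenfunction, contradicting the vanishing of $u_{1,\8}$ at an interior point. (The paper carries out the $\g$-type convergence explicitly rather than citing it, and allows the functionals $F_j$ to vary along the sequence, which the universality claim requires; but the idea is the same.)

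The connectedness argument, however, takes a genuinely different route from the paper and has a real gap at its crux. Your plan is a cut-and-paste comparison: translate $V_2$ to touch $V_1$ at $p_1$, glue in $B_s(p_1)$, and show the Hopf/Green's-identity gain $(\l_1(V_1)-\l_1(\W'))\gtrsim \int_{\p V_1\cap B_s(p_1)}\psi\,|\p_\nu\phi_1^{V_1}|$ beats the volume cost $o(s^n)$. For this you need $\psi\gtrsim s$ on a portion of $\p V_1\cap B_s(p_1)$ of $\cH^{n-1}$-measure $\gtrsim s^{n-1}$, i.e.\ a boundary Harnack/Hopf estimate for the \emph{glued} domain $\W'$ near the neck. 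The Harnack chain you invoke only yields $\psi\gtrsim s^{C}$ with $C=\log(1/c)$ coming from the chain length $\sim\log(1/s)$, and nothing controls $C\leq 1$; with $C>1$ the gain is $s^{n-1+C}=o(s^n)$ and the comparison fails. Getting $C=1$ requires quantitative regularity of $\p V_1$ near $p_1$ (beyond the existence of a measure-theoretic normal at $\cH^{n-1}$-a.e.\ point, which is all that is available at this stage) and a boundary Harnack principle up to the corners where $\p B_s(p_1)$ meets $\p V_1$ and $\p(V_2+w)$; neither is justified, and you would also need uniform nondegeneracy of $|\p_\nu\phi_1^{V_1}|$ on all of $\p V_1\cap B_s(p_1)$, not just at $p_1$. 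The paper avoids all of this with a soft argument: after translating two components to share a boundary point, it blows up both first eigenfunctions there; the limits $w_1,w_2$ are nondegenerate Lipschitz harmonic functions with disjoint supports whose common zero set has positive density (by the density estimates of Lemma \ref{lem:stronglb}), so the Alt--Caffarelli--Friedman functional $J(r)$ decays to $0$ while nondegeneracy forces $J(r)\geq c$ --- a contradiction requiring no competitor construction and no boundary regularity. I recommend replacing your gluing argument with this blow-up/ACF argument, or else supplying the missing boundary Harnack estimate for $\W'$, which I do not see how to do with the tools available at this point in the paper.
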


\begin{proof}
	We first show that $\W$ is connected. Indeed, assume not; then up to translating some of the components (here we use that $\eta=0$ and the functional is translation-invariant), we may assume two components $\W_1,\W_2$ have a common boundary point at $0$. Let $v_1$, $v_2$ be the (nonnegative) first eigenfunctions for $\W_1,\W_2$ respectively, which we know satisfy $\sup_{B_r} v_i \geq c A_i r$ for some domain-specific $A_i^{-1}>0$, $|\n v_i|\leq C$, and $c d(x,\p \W_i) \leq v_i(x) \leq C  d(x,\p \W_i)$. Taking the blow-up sequence
	\[
	 w_i^r = \frac{v_i(r \cdot)}{r},
	\]
	we see that these converge locally uniformly on their positivity sets (along subsequences) to a pair of harmonic functions $w_1,w_2$ enjoying the same three properties. Moreover, the sets $\{w_i^r >0\}$ converge in $L^1_{\text{loc}}$ to $\{w_i>0\}$: if $x\in \{w_i>0\}$, then $w_i^r(x)>0$ for $r$ large enough, and $x\in \{w_i^r(x)>0\}$. If  $x\in \bar{\{w_i > 0\}}^C$, then $w_i(y)=0$ on a neighborhood $B_s(x)$, and for $r$ large $w_i^r\ll s$ on $B_s(x)$; from the nondegeneracy property it follows that $w_i^r(x)=0$. Finally, we have that on $\p \{w_i>0\}$, $w_i$ is not differentiable: if $\n w_i(x)=0$ for an $x \in \p \{w_i>0\}$, this is a contradiction to the nondegeneracy property on sufficiently small balls around $x$, while if $|\n w_i(x)|>0$, this will contradict the positivity of $w_i$. Using that $w_i$ is Lipschitz continuous and Rademacher's theorem, we have $|\p \{w_i>0\}|=0$, and so $1_{\{w_i^r>0\}}\rightarrow 1_{\{w_i>0\}}$ almost everywhere, and so in $L^1_{\text{loc}}$ using the dominated convergence theorem.
	
	This means the $w_i$ have disjoint support, and yet $|B_s \cap \{w_1=w_2=0\}|/|B_s| >\d$ by using Lemma \ref{lem:stronglb}. This, however, is in direct contradiction of the Alt-Caffarelli-Friedman monotonicity formula: if
	\[
	J(r) = \frac{1}{r^4}\int_{B_r} \frac{|\n w_1|^2}{|x|^{n-2}}\int_{B_r} \frac{|\n w_2|^2}{|x|^{n-2}},
	\]
	we have that $J(r)\leq J(1)r^\a \rightarrow 0$ (see \cite[Corollary 12.4]{CS}). On the other hand,
	\[
	\frac{1}{r^2}\int_{B_r} \frac{|\n w_1|^2}{|x|^{n-2}} \geq \frac{1}{r^n}\int_{B_r} |\n w_1|^2 \geq c \frac{1}{r^{n+2}} \int_{B_r} w_1^2 \geq c,
	\]
	and likewise for $w_2$.
	
	Next we show that $A_\W$ is universally bounded. If this were not the case, we could produce a sequence of minimizers $\W_i$ and functionals $F_i$ with increasing values $A_i$. As we know that the diameters of $\W_i$ are bounded universally, we may assume that after a translation the $\W_i$ all reside in some large ball $B_R$. Extracting a subsequence, we may assume that $\l_k(\W_i) \rightarrow \l_k$, and the eigenfunctions $u_k^i\rightarrow u_k$ uniformly. If we set $\W$ to be the union of the supports of the $u_k$, we have $\W_i \rightarrow \W$ in the Hausdorff topology and in $L^1$ (like in the argument above), and the $u_k$ satisfy the equations $-\triangle u_k = \l_k u_k$ on $\W$. Finally, take $F_i \rightarrow F$ uniformly for some $F$; this $F$ will still have the same properties. For any fixed open set $U$, it is clear that $\lim F_i(U) = F(U)$, so we must have
	\[
	\inf_{U \text{open}} F(U) +|U| \geq \lim_i \inf_{U \text{open}} F_i(U) +|U| = \lim_i F_i(\W_i)+|\W_i| = F(\l_1,\ldots,\l_N) + |\W|.
	\]
	On the other hand, $\l_k(\W)\leq \l_k$ (simply using that $u_k$ are orthonormal eigenfunctions on $\W$), and so
	\[
	\inf_{U \text{open}} F(U) +|U|\leq F(\W) +|\W|\leq F(\l_1,\ldots, \l_N) +|\W|. 
	\]
	We conclude that $\W$ is a minimizer for $F$. Hence, $\W$ is connected. The argument above also implies that $\l_k = \l_k(\W)$, and in particular $u_1>0$ on $\W$.
	
	On the other hand, $A_i \rightarrow \8$. This means that on $\{|\bar{u}^i|=r_0\}$, we have $\inf u_1^i \rightarrow 0$; in other words, there is a sequence $x_i\rightarrow x\in \W$ with $u_1^i(x_i)\rightarrow 0$. It follows that $u_1(x)=0$, and this is a contradiction.
\end{proof}

Recall that $R_0$ is the radius on which $g$ is supported.

\begin{lemma} \label{lem:con2} There is a constant $\eta_0=\eta_0(R_0,\l,n, N)$ such that if $\eta<\eta_0$, minimizers are connected and $A_0$ is uniformly bounded. 
\end{lemma}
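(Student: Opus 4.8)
The plan is to argue by contradiction using a compactness argument, in the spirit of the last part of the proof of Lemma~\ref{lem:connected} but now carrying along the perturbations $f,g$. Suppose the conclusion fails; then there are $\eta_i\to 0$, functionals $\cF_i(\W)=F_i(\W)+\int_\W f_i+\int_{\R^n\sm\W}g_i$ with $|f_i-1|\le\eta_i$ and $g_i$ supported in $B_{R_0}$ with $|g_i|\le\eta_i$, and minimizers $\W_i$ of $\cF_i$ that are either disconnected or connected with $A_{\W_i}\to\8$. Every basic estimate of this section---the universal bounds on $|\W_i|$, $\l_k(\W_i)$, $\|\n u_k^i\|_\infty$, on the diameter of each connected component, and the bound $N$ on the number of components---holds with constants uniform for $\eta\le\tfrac12$, and I would restrict to such $i$. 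I would also first record that for $\eta$ below a universal threshold $\eta_1$ no component $V$ of a minimizer can satisfy $\l_1(V)>\l_N(\W)$: otherwise deleting $\overline V$ leaves $\l_1,\dots,\l_N$ (hence $F$) unchanged while reducing the volume by at least the universal amount $c_0r_0^n$ (Corollary~\ref{cor:diam}) and changing $\int g$ by at most $\eta|B_{R_0}|$, which strictly lowers $\cF$ once $\eta<\eta_1$. Hence the special basis may be chosen so that for \emph{every} component $V$ of $\W_i$ the zero-extended first eigenfunction $u_V$ appears among $u_1^i,\dots,u_N^i$. Passing to a subsequence, one is in one of two cases.

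\emph{Case 1: infinitely many $\W_i$ are disconnected.} After a further subsequence each $\W_i$ has exactly $m$ components $V_1^i,\dots,V_m^i$, $2\le m\le N$. The idea is to \emph{rearrange the components to lie far apart}: since each has diameter $\le C$, translate them into fixed disjoint balls $B_C(z_j)$ with $|z_j-z_{j'}|\ge 10C$ for $j\ne j'$, producing a bounded open set $\W_i'$ whose components are pairwise separated by distance $\ge 8C$. As the spectrum of a disjoint union is the ordered union of the spectra of its pieces, and rigid motions preserve spectra, $\l_k(\W_i')=\l_k(\W_i)$ for all $k$; thus $F_i(\W_i')=F_i(\W_i)$ and $|\W_i'|=|\W_i|$, while the $f$- and $g$-integrals change by at most $2\eta_i|\W_i|$ and $2\eta_i|B_{R_0}|$. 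Since $\W_i$ minimizes $\cF_i$, this gives $0\le\cF_i(\W_i')-\cF_i(\W_i)\le C\eta_i$, so $\W_i'$ is a $C\eta_i$-almost minimizer of $\cF_i$. I would then run the compactness argument of Lemma~\ref{lem:connected}: extract $F_i\to F$ (same bounds), $\l_k(\W_i')\to\l_k$, $u_k^i\to u_k$ uniformly, and $\W_i'\to\W'$ in $L^1$ and in the Hausdorff topology, where $\W'$ is the union of the positivity sets of the $u_k$. The $C\eta_i$ error vanishes in the limit, so for every bounded open $U$, $F(\W')+|\W'|\le\liminf\cF_i(\W_i')\le\lim\cF_i(U)=F(U)+|U|$ (using $\l_k(\W')\le\l_k$ and monotonicity of $F$), whence $\W'$ minimizes $F(\cdot)+|\cdot|$; by Lemma~\ref{lem:connected} it is connected. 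But each $u_{V_j^i}$ has unit $L^2$ norm and support in $B_C(z_j)$, so its uniform limit is nontrivial with support in $\overline{B_C(z_j)}$, while every $u_k^i$ is supported in a single $B_C(z_j)$; hence $\W'\ss\bigcup_j\overline{B_C(z_j)}$ and $\W'$ meets at least $m\ge 2$ of these pairwise-disjoint, positively separated balls. So $\W'$ is disconnected---a contradiction.

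\emph{Case 2: every $\W_i$ is connected and $A_{\W_i}\to\8$.} Since $\diam \W_i\le C$, translate so that $\tilde\W_i\ss B_C$; as in Case~1, moving the set changes $\cF_i$ by at most $C\eta_i$, so $\tilde\W_i$ is a $C\eta_i$-almost minimizer. The same compactness argument yields a limit $\W$ minimizing $F(\cdot)+|\cdot|$, which by Lemma~\ref{lem:connected} is connected with $u_1>0$ on $\W$. On the other hand, $A_{\tilde\W_i}=A_{\W_i}\to\8$ says precisely that $\inf\{u_1^i(x):x\in\tilde\W_i,\ \bar u^i(x)=r_0\}=r_0/A_{\W_i}\to 0$, so there are $\tilde x_i$ with $\bar u^i(\tilde x_i)=r_0$ and $u_1^i(\tilde x_i)\to 0$; along a subsequence $\tilde x_i\to\tilde x$, and in the limit $\bar u(\tilde x)=r_0>0$ (so $\tilde x\in\W$) while $u_1(\tilde x)=0$, contradicting positivity of the first eigenfunction on the connected set $\W$.

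The step I expect to be the main obstacle is Case~1: one must be sure that the rearrangement is free of cost up to $O(\eta_i)$---which works precisely because $F$ sees only the spectrum, which is a translation-invariant union over components, and because $f,g$ are uniformly $O(\eta)$-close to $1$ and $0$---and that the limiting set inherits enough minimality to invoke Lemma~\ref{lem:connected}. Packing the components far apart, rather than sliding two of them into contact and appealing to the Alt--Caffarelli--Friedman monotonicity formula as in the $\eta=0$ case, is what keeps this argument short, since it sidesteps the need to control the two-phase density constants uniformly. Once this lemma is established, all constants in the remainder of the paper may be taken universal.
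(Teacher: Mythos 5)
Your proof is correct, and the overall skeleton -- contradiction, compactness of the sequence of minimizers, passage to a limiting minimizer of the $\eta=0$ functional, and an appeal to Lemma \ref{lem:connected} -- is exactly the paper's. The difference lies in how the disconnected case produces its contradiction. The paper leaves the components of $\W_\eta$ where they are (translating only those escaping $B_{R_0}$ to keep diameters bounded) and observes that two of the first $N$ eigenfunctions then have disjoint supports for all $\eta$; since the limit set is a connected minimizer, its first eigenfunction is positive everywhere, which is incompatible with the limits of those two eigenfunctions retaining disjoint supports under uniform convergence. You instead rearrange the components into well-separated fixed balls before taking the limit, accepting an $O(\eta_i)$ loss of minimality from the $f$- and $g$-terms, so that the limit is manifestly disconnected and contradicts Lemma \ref{lem:connected} directly. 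Your route requires the extra (but correctly handled) step of checking that $C\eta_i$-almost-minimality survives the limit and that each rearranged component contributes a nontrivial limit eigenfunction; in exchange it avoids the eigenfunction-support argument entirely and, as you note, sidesteps any monotonicity-formula considerations. Your preliminary observation that components with $\l_1(V)>\l_N(\W)$ can be deleted at a strict gain once $\eta<1/2$ is a useful, slightly more careful justification of a fact the paper asserts in Section \ref{sec:general} without tracking the $g$-term. Both arguments are sound.
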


\begin{proof} Assume for contradiction that one of the two conclusions fails. Then there is a sequence of functionals $F_\eta$ with $\eta \rightarrow 0$ and minimizers $\W_\eta$ which violate that conclusion. Arguing exactly as in the proof of Lemma \ref{lem:connected}, we have $F_\eta\rightarrow F$ uniformly, $\W_\eta \rightarrow \W$ in Hausdorff topology and in $L^1$, $\l_k(\W_\eta) \rightarrow \l_k(\W)$ for $k=1,\ldots,N$, and $\W$ is a minimizer for $F$. Note that we have used that if a connected component of $\W_\eta$ is outside of $B_{R_0}$, we may translate it, so that the diameters of $\W_\eta$ remain bounded.
	
Say that the $\W_\eta$ violated the first conclusion: they were disconnected. Then for infinitely many $\eta$, there are two eigenfunctions $u_1^\eta,u_k^\eta$ among the first $N$ which have disjoint support. However, by Lemma \ref{lem:connected}, $\W$ is connected, and so the support of their limits must overlap--this is a contradiction.

If instead $A_\eta \rightarrow \8$, we obtain a contradiction like in the proof of \ref{lem:connected}, as this implies that $u_1$ vanishes at some point in $\W$.
\end{proof}

From now on, we will always assume that $\eta<\eta_0$, so that the conclusions of Lemma \ref{lem:con2} apply.

\section{Blow-ups}\label{sec:blowup}

Below, a minimizer has property S if $\l_1<\ldots <\l_N<\l_{N+1}$, i.e. all of the eigenvalues are simple. It has property E if 
\[
 \sum_{k=1}^N (u_k)_\nu^2(x) \xi_k = \xi_0(x) \qquad \cH^{n-1}-\text{a.e. on }\p\W
\]
for some nonnegative constants $\{\xi_k\}_{k=1}^N$ and $\xi_0(x)=f(x)-g(x)$, and some choice of orthonormal eigenfunctions $u_k$. Note that it is possible to obtain a basis-independent version of property E, which would read that given an orthonormal basis of eigenfunctions $\{u_k\}$ there is a constant, symmetric, positive semidefinite matrix $A$ such that
\[
 u_\nu^T A u_\nu = \xi_0(x),
\]
where $u$ is the vector of eigenfunctions, and moreover $A$ is block diagonal with a block for each distinct eigenvalue of $\W$. The first form of property E is then recovered by applying the spectral theorem to each block; below, we will always elect to work with a set of eigenfunctions $u_k$ which diagonalizes $A$. 

Property E should be thought of as the stationarity condition for a minimizer. We will shortly establish that property S implies property E, as in that case property E is exactly the Euler-Lagrange equation of $\W$ with respect to domain variations. Later we show that \emph{every} minimizer satisfies property E (with constants $\xi_k$ bounded from above and below); this is a surprising finding, as in that case $F$ need not be differentiable with respect to domain variations, and so property E gives more refined information on the geometry of $\W$. We do not believe that all minimizers must satisfy property S.

\begin{lemma} \label{lem:domvar} Let $\W$ be a minimizer. Then Property $E$ is equivalent to the following fact: for any vector field $\U$ supported on a ball $B_r(x)$, $x\in \p \W$, with $\|\U\|_{C^1(\R^n;\R^n)}\leq 1$, let $\phi_t$ be the corresponding family of diffeomorphisms
	\[
	\phi_t(x) = x + t \U(x)
	\]
	with $|t|\leq 1$. Then if $u_k^t = u_k \circ \phi_t$,
	\begin{equation}\label{eq:Edomvar}
	 \left| \int_\W \sum_k \xi_k |\n u_k|^2 + f - g  - \int_{\phi_t^{-1} (\W)} \sum_k \xi_k |\n u_k^t|^2 + f -g \right| \leq C(t^2 + t r^2) r^n \max |\n \U|, 
	\end{equation}
	where $C$ is universal.
\end{lemma}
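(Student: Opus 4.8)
\emph{Proof plan.} The statement is an equivalence, and I would derive both implications from a single first--variation computation together with a quantitative control of the error. Concretely, writing $\mathcal J(t):=\int_{\phi_t^{-1}(\W)}\sum_k\xi_k|\n u_k^t|^2+f-g$, the goal is to prove
\[
\mathcal J(t)-\mathcal J(0)=t\int_{\p^*\W}\Big(\sum_k\xi_k (u_k)_\nu^2-(f-g)\Big)(\U\cdot\nu)\,d\cH^{n-1}+O\big((t^2+tr^2)r^n\max|\n\U|\big),
\]
with the $O(\cdot)$ constant universal. Granting this, the equivalence is immediate. If Property $E$ holds, then $\sum_k\xi_k(u_k)_\nu^2=f-g$ $\cH^{n-1}$-a.e.\ on $\p\W$, hence (recall $\cH^{n-1}(\p\W\setminus\p^*\W)=0$, by the discussion after Lemma~\ref{lem:stronglb}) the boundary integral vanishes identically, which is exactly \eqref{eq:Edomvar}. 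Conversely, if \eqref{eq:Edomvar} holds for every admissible $\U$, dividing by $|t|$ and sending $t\to0^{\pm}$ forces $\big|\int_{\p^*\W}\rho\,(\U\cdot\nu)\,d\cH^{n-1}\big|\le Cr^{n+2}\max|\n\U|$, where $\rho:=\sum_k\xi_k(u_k)_\nu^2-(f-g)\in L^\8$; then a standard localization — test with $\U=\zeta\,\nu_\W(x_0)$ for $\zeta$ a cutoff supported in $B_r(x_0)$, $\zeta\equiv1$ on $B_{(1-\e)r}(x_0)$, $|\n\zeta|\le C/(\e r)$, centered at a point $x_0\in\p^*\W$ that is simultaneously a Lebesgue point of $\rho$ and a point where the De Giorgi structure theorem holds, then let $r\to0$ and $\e\to0$, using the density and Minkowski bounds of Lemmas~\ref{lem:mink}--\ref{lem:stronglb} — gives $\rho=0$ $\cH^{n-1}$-a.e., i.e.\ Property $E$.

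To obtain the displayed formula I would change variables $y=\phi_t(x)$, so that the integrals are over the fixed set $\W$, and use that $\phi_t=\mathrm{id}$ off $B_r(x)$ so that \emph{all} $t$--dependence is carried by $\W\cap B_r(x)$. Expanding the algebraic matrix factors $(I+tD\U)$ and $\det(I+tD\U)^{-1}$ to first order in the bounded matrix $tD\U$, and using $|\n u_k|\le C$, $|f-g|\le C$, one extracts the linear term $t\int_\W\big[\sum_k\xi_k(2\,\n u_k^TD\U\,\n u_k-|\n u_k|^2\dvg\U)-(f-g)\dvg\U\big]$ plus $\int_\W\!\big((f-g)\circ\phi_t^{-1}-(f-g)\big)$; the last two together equal $\int_{\phi_t^{-1}(\W)}(f-g)-\int_\W(f-g)=-t\int_{\p^*\W}(f-g)(\U\cdot\nu)\,d\cH^{n-1}+O(t^2r^n\max|\n\U|)$, by the divergence theorem applied to the Lipschitz field $(f-g)\U$ on the finite--perimeter set $\W$, the $O(t^2)$ coming from the second--order motion of $\p\W$ under the flow, bounded through $|\phi_t^{-1}(\W)\triangle\W|\le Ctr^n\max|\n\U|$ (Lemma~\ref{lem:mink}). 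Finally the bulk eigenfunction term is converted by the Rellich/Pohozaev identity: using $-\triangle u_k=\l_k u_k$ in $\W$ — equivalently $-\triangle u_k=\l_k u_k\chi_\W+(u_k)_\nu\,\cH^{n-1}\mres\p\W$ as distributions, with $(u_k)_\nu$ the Radon--Nikodym density furnished after Lemma~\ref{lem:stronglb} — together with $u_k=0$ on $\p\W$ (so that $|\n u_k|^2=(u_k)_\nu^2$ $\cH^{n-1}$-a.e.\ on $\p^*\W$, the tangential part of $\n u_k$ being measure--theoretically trivial there), one has
\[
\int_\W\big(2\,\n u_k^TD\U\,\n u_k-|\n u_k|^2\dvg\U\big)=\int_{\p^*\W}(u_k)_\nu^2(\U\cdot\nu)\,d\cH^{n-1}-\l_k\int_\W(\dvg\U)\,u_k^2,
\]
and the last term is $O(r^{n+2}\max|\n\U|)$ since $|u_k|\le Cr$ on $B_r(x)$ (Lipschitz, $u_k(x)=0$). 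This identity I would prove by exhausting $\W$ from the inside by smooth sets, where it is classical, and passing to the limit, controlling the surface terms by the Minkowski and density estimates and by the fact that the tangential gradient of $u_k$ along the approximating hypersurfaces tends to $0$.

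The main obstacle is obtaining the remainder bound with a \emph{universal} constant, i.e.\ using only $\|\U\|_{C^1}\le1$. One cannot afford to write the error as $\int_0^t(t-s)\mathcal J''(s)\,ds$, since $\mathcal J''(s)$ involves up to three derivatives of $\U$. Instead every step must be organized so that each error term is the integral of a \emph{bounded} integrand over a set of measure $O(r^n)$ or $O(tr^n\max|\n\U|)$ (the latter supplied by the Minkowski content bound), while the genuinely first--order-in-$t$ contributions are identified exactly. This is precisely why the PDE $-\triangle u_k=\l_k u_k$ must be invoked — through the Rellich identity — \emph{before} any estimation, rather than merely the Lipschitz bound $|\n u_k|\le C$: without passing to the boundary form, the difference $\n u_k(\phi_t(\cdot))-\n u_k(\cdot)$ survives, and it is only $o(1)$, not $O(t)$, in $L^1_{\mathrm{loc}}$.
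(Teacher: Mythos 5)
Your overall architecture coincides with the paper's: establish the first--variation identity
\[
\mathcal J(t)-\mathcal J(0)=-t\int_{\p^*\W}\Bigl(\sum_k\xi_k(u_k)_\nu^2-\xi_0\Bigr)(\U\cdot\nu)\,d\cH^{n-1}+O\bigl((t^2+tr^2)r^n\max|\n\U|\bigr),
\]
then observe that Property $E$ kills the surface term, and conversely recover Property $E$ by dividing by $t$, sending $t\to0$, and localizing at $\cH^{n-1}$-a.e.\ point of $\p^*\W$ with a cutoff vector field. The Taylor expansion of the Jacobian factors, the treatment of the volume term, and the converse localization are all essentially as in the paper (the paper uses a cutoff $q$ with $|\n q|\le 1$ and $q=r/4$ on $B_{r/2}$ to respect the normalization $\|\U\|_{C^1}\le 1$, but your rescaling works out). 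The Rellich identity you write for the bulk term is exactly the identity the paper needs (it is the statement $S(\U)=-\int_{\p^*\W}(u_k)_\nu^2\,\U\cdot\nu\,d\cH^{n-1}$ after integrating the $2\l_k u_k\U\cdot\n u_k$ term by parts).

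The genuine gap is in your proposed proof of that identity. You suggest exhausting $\W$ by smooth sets and passing to the limit, "controlling the surface terms by the Minkowski and density estimates and by the fact that the tangential gradient of $u_k$ along the approximating hypersurfaces tends to $0$." That last clause, together with the implicit claim that $(\p_\nu u_k)^2\,d\cH^{n-1}$ on the approximating hypersurfaces converges weak-$*$ to $(u_k)_\nu^2\,d\cH^{n-1}\mres\p^*\W$, is precisely the hard content, and it does not follow from the Minkowski content or density bounds: those control only the \emph{mass} of the surface measures, not the behavior of $\n u_k$ near $\p\W$, and the square prevents one from deducing the convergence from the distributional convergence of $\triangle u_k$. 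What is actually needed is that $\n u_k$ has (measure-theoretic, nontangential) limit $-(u_k)_\nu\nu$ at $\cH^{n-1}$-a.e.\ point of $\p^*\W$. The paper supplies this by a different device: it first shows, via the infimal-convolution truncation $\U_\d$ and the Minkowski bound, that the functional $S$ is represented by a vector measure $s$ on $\p\W$ with $|s|(B_r(x))\le Cr^{n-1}$, hence $s=g\,d\cH^{n-1}\mres\p\W$ for a bounded Borel density $g$, and then identifies $g=-(u_k)_\nu^2\nu$ by blowing up $u_k$ at simultaneous Lebesgue points of $g$ and $(u_k)_\nu$ on the reduced boundary (the blowup limit is forced to be $\a_k(x\cdot\nu)^-$ with $\a_k=-(u_k)_\nu(x)$, and passing to the limit in the rescaled identity for $S$ evaluates $g(x)$). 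Your exhaustion scheme could in principle be completed, but only by importing an argument of this type; as written, the step you label as "a fact" is the theorem.
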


\begin{proof}
	Let us begin with a technical aside. Consider the functional
	\[
	S: \U \mapsto \int_\W  |\n u_k|^2 \dvg \U - 2 \n u_k \n \U \n u_k + 2 \l_k u_k \U \n u_k
	\]
	defined on compactly supported $C^1$ vector fields $\U$. Now, for any $\U$ supported away from $\p \W$, it is easy to check by integrating by parts that $S$ vanishes. For any $\U$ with one component $\U^1 \geq 0$,  $\U^2,\ldots,\U^n=0$  take the functions
	\[
	\U_\d(x) = e_1 (\max_{y\in \p \W} \{ \U^1 (y) - |x-y|/\d \})^+,
	\]
	which for $\d<1/\max |\n \U|$ coincide with $\U$ on $\p \W$ and have $|\n \U_\d|\leq 1/\d$. We have then that $S(\U)=S(\U_\d)$, and (using the Minkowski content bound of Lemma \ref{lem:mink})
		\[
		|S(\U_\d)| \leq C \int_{d(x,\p \W)< \d \max |\U|} |\n \U_\d| + |\U_\d| \leq C \max |\U|.
		\]   
	We may do this for the positive and negative parts of each component of $\U$, giving $|S(\U)|\leq C\max |\U|$ for any $\U$ by linearity. Applying the Riesz representation theorem, we learn that $S$ is represented by a vector-valued Borel measure $s$ supported on $\p \W$. Another application of the argument just made reveals that $|s|(B_r(x))\leq Cr^{n-1}$ for each $x\in \p \W$, and hence $s$ admits a representation
	\[
	s(E) = \int_{E\cap \p \W} g d \cH^{n-1}.
	\]
	We claim that the bounded density $g$ is given by $g = - (u_k)^2_\nu \nu$ (where $\nu$ represents the measure-theoretic outward normal at points in $\p^* \W$). It suffices to check this at each point of the reduced boundary where
	\[
	\fint_{B_r(x)\cap \p \W} |g - g(x)|d \cH^{n-1} \rightarrow 0
	\]
	and
	\[
	 \fint_{B_r(x)\cap \p \W} |(u_k)_\nu - (u_k)_\nu(x)|d \cH^{n-1} \rightarrow 0.
	\]
	At such a point, perform a blow-up along a subsequence $r_i\searrow 0$
	\[
	v_i = \frac{u_k(x + r_i \cdot)}{r_i} \rightarrow v,
	\]
	with the convergence to a continuous $v$ guaranteed by the Lipschitz bound on $u_k$. As $x\in \p^* \W$, we have that $v$ vanishes on a half-space, is harmonic on the complementary half-space, and has $|\n v|\leq C$. It follows that in coordinates with $e_n = \nu$, $v = \a x_n^-$. Moreover, from the distributional convergence of $\triangle v_i\rightarrow \triangle v$, we have that $\a = - (u_k)_\nu (x)$.
	
	On the other hand, we have that for each vector field $\U$,
	\[
	\int_{(\W-x) /r_i}  |\n v_i|^2 \dvg \U - 2 \n v_i \n \U \n v_i + 2 \l_k r_i^2 v_i \U \n v_i = \int_{(\p \W-x)/r_i} g(x + r_i \cdot) \U d\cH^{n-1}. 
	\]
	 Taking the limit on each side (using dominated convergence theorem) gives
	 \[
	  \int_{x_n <0} |\n v|^2 \dvg \U - 2 \n v \n \U \n v = \int_{x_n = 0} g(x)\U d\cH^{n-1}.
	 \]
	 Entering our representation for $v$ and integrating by parts gives $g(x) = - (u_k)_\nu^2(x) e_n$, as claimed.
	 
	 We return to the proof of the lemma. Note that under the assumptions made on $\U$, we have $|\U|\leq r$ and $\phi_t^{-1}(x)=x$ outside $B_{2r}(x)$. Using the area formula,
	 \[
	 \int_{\phi_t^{-1} (\W)} |\n u_k^t|^2 = \int_{\W} |\n u_k \n \phi_t\circ \phi^{-1}_t |^2 |\det \n \phi_t^{-1}|.
	 \]
	 Using $\n \phi_t = I + t \n \U$ and expanding in a Taylor series,
	 \[
	 \left|\int_{\phi_t^{-1} (\W)} |\n u_k^t|^2 - \int_{\W}|\n u_k|^2 + 2 t \n u_k \n \U \n u_k - t \dvg \U |\n u_k|^2 \right| \leq Ct^2 r^n \max |\n \U|.
	 \]
	 Now using our representation for $S$,
	 \begin{align*}
	 & \left|\int_{\phi_t^{-1} (\W)} |\n u_k^t|^2 - \int_{\p^* \W}(u_k)_\nu^2 \U \nu d\cH^{n-1} - \int_{\W}|\n u_k|^2 + 2 t \l_k u_k \U \n u_k  \right| \\
	 &=\left|\int_{\phi_t^{-1} (\W)} |\n u_k^t|^2 + S(u)- \int_{\W}|\n u_k|^2 + 2 t \l_k u_k \U \n u_k  \right|\\
	 &\leq C t^2 r^n\max |\n \U|.
	 \end{align*}
	 Similarly, using that $\W$ has finite perimeter,
	 \begin{align*}
	 & |\int_{\phi_t^{-1} (\W)} f - g  + t\int_{\p^* \W} (f - g)\U \nu d\cH^{n-1} - \int_\W f-g|\\
	 & = |\int_{\W} [(f - g)\circ\phi_t^{-1} |\det \n \phi^{-1}_t| - (f-g)]   + t\int_{\W} \dvg (f-g)\U|\\
	 & \leq Ct^2 r^n \max |\n \U|
	 \end{align*}
	 Combining,
	 \begin{align*}
	 &\big| \int_\W \sum_k \xi_k |\n u_k|^2 + f - g  - \int_{\phi_t^{-1} (\W)} \sum_k \xi_k |\n u_k^t|^2 + f -g \\
	 &\qquad + t\int_{\p^* \W}(\sum_k \xi_k (u_k)_\nu^2 - \xi_0) \U\nu d\cH^{n-1}\big|\\
	 &\leq C(t^2 + r^2 t) r^n\max|\n \U|.
	 \end{align*}
	 
	 If property E holds, the surface term vanishes, and so we have just established \eqref{eq:Edomvar}. Conversely, say that \eqref{eq:Edomvar} holds: then we have
	 \[
	 |\int_{\p^* \W}(\sum_k \xi_k (u_k)_\nu^2 - \xi_0) \U\nu d\cH^{n-1}|\leq C(t + r^2) r^n\max|\n \U|.
	 \]
	 Taking $t$ to $0$,
	 \[
	 |\int_{\p^* \W}(\sum_k \xi_k (u_k)_\nu^2 - \xi_0) \U\nu d\cH^{n-1}|\leq C r^{n+2}\max|\n \U|.
	 \]
	 Assuming that at some $x\in \p^* \W$ we have
	 \[
	  \max_k \fint_{\p \W \cap B_r(x)} |(u_k)_\nu - u_k| \rightarrow 0
	 \]
	 and
	 \[
	  |\xi_k - \sum \xi_k (u_k)_\nu^2|>\e, 
	 \]
	 take $q$ a nonnegative smooth function radial about $x$, supported on $B_r(x)$, is $r/4$ on $B_{r/2}$, and has $|\n q|\leq 1$. Set $\U = \nu_x q$, and observe that for $r$ sufficiently small, we have
	 \[
	 |\int_{\p^* \W}(\sum_k \xi_k (u_k)_\nu^2 - \xi_0) \U\nu d\cH^{n-1}| \geq c\e r^{n}.  
	 \]  
	 On the other hand, the same quantity is bounded by $Cr^{n+2}$ by our estimate, which is a contradiction for small enough $r$.
\end{proof}

\begin{lemma}\label{lem:blowup} Let $\W$ be a minimizer, and $0\in \p \W$.
	\begin{enumerate}
		\item Then the blow-up sequence
		\[
		\3u_k^i(\cdot):=\frac{u_k(r_j \cdot)}{r_j}\4
		\]
		admits a subsequence converging locally uniformly to functions $\{v_k\}$, with $v_1\geq 0$. The $v_k$ are harmonic on $\{v_1>0\}$, have $|\n v_k|\leq C$, and vanish on $\{v_1=0\}$.
		\item If property E holds, then
		\[
		\sum_{k=1}^N (v_k)_\nu^2(x) \xi_k = \xi_0(0) \qquad \cH^{n-1}-\text{a.e. on }\p \{v_1>0\},
		\]
		where $(v_k)_\nu$ is a Borel function for which $\triangle v_k = - (v_k)_\nu d\cH^{n-1}\mres \p \{v_1>0\}$.
		\item If property S holds, then the $v_k$ are local minimizers of the functional
		\[
		F_*(\{v_k\};E) = \sum_{k=1}^N\int_E \xi_k |\n v_k|^2  + \xi_0|E|, 
		\]
		where the constants $\xi_k$ are given by $\p_{\l_k} F(\l_1,\cdots, \l_N)$ and $\xi_0 = f(0)-g(0)$ (among open sets $E$ and collections $v_k$ of Lipschitz continuous functions which are supported on $E$).
	\end{enumerate}  
\end{lemma}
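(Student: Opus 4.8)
\emph{Overview.} I would prove the three parts in order, using only the universal Lipschitz bound for the eigenfunctions (from \cite{BMPV}), the connectedness of $\W$ and the estimate $\bar u\le Au_1$ from Section \ref{sec:general}, the density bounds of Lemmas \ref{lem:mink} and \ref{lem:stronglb}, and — for parts (2) and (3) — the domain-variation computation of Lemma \ref{lem:domvar}. For part (1): since $u_k(0)=0$ and the $u_k$ are Lipschitz with a universal constant, the $u_k^i$ are uniformly Lipschitz and locally bounded, so a diagonal subsequence gives $u_k^i\to v_k$ locally uniformly (hence weakly in $H^1_{\mathrm{loc}}$), with $v_k$ Lipschitz and $|\n v_k|\le C$. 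By Lemma \ref{lem:con2} (recall $\eta<\eta_0$), $\W$ is connected and $u_1>0$ on $\W=\{u_1>0\}$, so $v_1\ge 0$; and by Lemma \ref{lem:stronglb} with connectedness, $\bar u\le Au_1$ on $\W$, so $|u_k^i|\le Au_1^i$ and $|v_k|\le Av_1$ in the limit, whence $v_k$ vanishes on $\{v_1=0\}$. Finally, $-\triangle u_k^i=\l_kr_j^2u_k^i$ on $\{u_1^i>0\}$, and if $v_1>0$ on a ball then $u_1^i>0$ there for $i$ large; letting $j\to\8$ in the weak formulation (right side $\to 0$) gives that $v_k$ is harmonic on $\{v_1>0\}$.

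\emph{Part (2).} Rescaling the estimate $|\triangle u_k|(B_s(x))\le Cs^{n-1}$ from the proof of Lemma \ref{lem:mink} gives $|\triangle u_k^i|(B_R)\le CR^{n-1}$ uniformly in $i$, so $\triangle u_k^i\rightharpoonup\triangle v_k$ as measures and $|\triangle v_k|(B_R(x))\le CR^{n-1}$; since $v_k$ is harmonic where $v_1>0$ and vanishes where $v_1=0$, $\triangle v_k$ is supported on $\p\{v_1>0\}$, and the density bound (together with the rescaled Ahlfors regularity of $\p\{v_1>0\}$ from Lemma \ref{lem:stronglb}) forces $\triangle v_k=-(v_k)_\nu\,\cH^{n-1}\mres\p\{v_1>0\}$ with $(v_k)_\nu$ a bounded Borel function. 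For the free boundary relation I would use that, by Lemma \ref{lem:domvar}, property E is equivalent to \eqref{eq:Edomvar}: rescaling that estimate by $r_j$ and dividing by $r_j^n$ turns the error $C(t^2+tr^2)r^n$ into $C(t^2+tr^2r_j^2)r^n$, whose lower-order piece vanishes as $j\to\8$, while $\fint_{B_{rr_j}}(f-g)\to\xi_0(0)$; passing to the limit yields that $(\{v_k\},\{v_1>0\})$ has vanishing first variation for $\sum_k\xi_k\int_E|\n v_k|^2+\xi_0(0)|E|$. Running the surface-density identification from the first half of the proof of Lemma \ref{lem:domvar} (one further blow-up at $\cH^{n-1}$-a.e.\ point of $\p^*\{v_1>0\}$ shows the density is $-(v_k)_\nu^2\nu$) then gives $\sum_k\xi_k(v_k)_\nu^2=\xi_0(0)$ a.e.\ on $\p\{v_1>0\}$, using $\cH^{n-1}(\p\{v_1>0\}\sm\p^*\{v_1>0\})=0$.

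\emph{Part (3).} Given a competitor $(E,\{w_k\})$ for $(\{v_1>0\},\{v_k\})$ on some $B_R$ (i.e.\ $E$ open, $E\triangle\{v_1>0\}\ss B_R$, $w_k$ Lipschitz supported on $E$ with $w_k=v_k$ near $\p B_R$), transplant it to scale $r_j$: let $\tilde\W_j$ equal $\W$ outside $B_{Rr_j}$ and $r_jE$ inside, with test functions $\tilde u_k^j$ obtained by gluing $r_jw_k(\cdot/r_j)$ to $u_k$ across a thin annulus (a standard correction costing $o(r_j^n)$). Two inputs drive the argument. First, the easy Rayleigh bound with these Lipschitz test functions gives $\l_k(\tilde\W_j)\le\l_k(\W)+Cr_j^n$, while minimality of $\W$ against $\W\cup\tilde\W_j$ together with $\p_{\xi_k}F\ge\l>0$ gives the reverse inequality; hence $|\l(\tilde\W_j)-\l(\W)|\le Cr_j^n$ and so $\w(|\l(\tilde\W_j)-\l(\W)|)\,|\l(\tilde\W_j)-\l(\W)|=o(r_j^n)$ for the modulus of continuity $\w$ of $\n F$. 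Second — the crux — applying the min--max characterization of $\l_k(\tilde\W_j)$ to $\mathrm{span}(\tilde u_1^j,\dots,\tilde u_k^j)$: for $v=\sum_{l\le k}\a_l\tilde u_l^j$ with $\sum\a_l^2=1$, the $O(r_j^n)$ contributions of all cross terms reorganize into the single squared quantity $r_j^n\big[\int_{B_R}|\n(\sum_l\a_lw_l)|^2-\int_{B_R}|\n(\sum_l\a_lu_l^i)|^2\big]$, and then the simplicity $\l_1<\dots<\l_N<\l_{N+1}$ (so $\sum_l\a_l^2\l_l\le\l_k-\theta(\l_k-\l_{k-1})$ with $\theta=\sum_{l<k}\a_l^2$) together with weak lower semicontinuity of the Dirichlet energy yields $\l_k(\tilde\W_j)\le\l_k(\W)+r_j^n\big[\int_{B_R}|\n w_k|^2-\int_{B_R}|\n v_k|^2\big]+o(r_j^n)$. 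Expanding $F$ to first order (using $\xi_k=\p_{\l_k}F(\W)>0$ and the $o(r_j^n)$ modulus error) and inserting this into $F(\tilde\W_j)+\int_{\tilde\W_j}f+\int_{\tilde\W_j^c}g\ge F(\W)+\int_\W f+\int_{\W^c}g$, the $r_j^n$-order terms give, after dividing by $r_j^n$ and letting $j\to\8$ (with the $L^1_{\mathrm{loc}}$ convergence $\{u_1^i>0\}\to\{v_1>0\}$ proved as in Lemma \ref{lem:connected}),
\[
\sum_k\xi_k\int_{B_R}|\n v_k|^2+\xi_0(0)\,|\{v_1>0\}\cap B_R|\le\sum_k\xi_k\int_{B_R}|\n w_k|^2+\xi_0(0)\,|E\cap B_R|,
\]
which is exactly local minimality of $(\{v_k\},\{v_1>0\})$ for $F_*$ with $\xi_k=\p_{\l_k}F(\l_1,\dots,\l_N)$ and $\xi_0=f(0)-g(0)$.

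I expect the eigenvalue perturbation estimate in part (3) to be the main obstacle: one must quantify the change of $\l_k$ under a simultaneous perturbation of the domain and of the test functions, and this is exactly where property S enters — the spectral gap from simplicity is what prevents an element of $\mathrm{span}(\tilde u_1^j,\dots,\tilde u_k^j)$ spread across several of the $\tilde u_l^j$ from producing a Rayleigh quotient exceeding $\l_k(\W)+r_j^n\int_{B_R}(|\n w_k|^2-|\n v_k|^2)$ by more than a negligible amount. Parts (1) and (2) are comparatively soft once the results of Section \ref{sec:general} and Lemma \ref{lem:domvar} are in hand.
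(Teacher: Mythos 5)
Your proposal is correct and follows essentially the same route as the paper: part (1) from the Lipschitz bound and the nondegeneracy/comparison with $u_1$, part (2) by passing the rescaled domain-variation identity of Lemma \ref{lem:domvar} to the limit and rerunning its surface-density identification, and part (3) by gluing the competitor with a cutoff, applying the min--max characterization to $\mathrm{span}(\tilde u_1^j,\ldots,\tilde u_k^j)$, and using the spectral gap from property S to force $\a_k^2\geq 1-Cr_j^n$ so that only the $k$-th Dirichlet energy difference survives at order $r_j^n$. The bookkeeping of the cross terms and of the Taylor expansion of $F$ differs only cosmetically from the paper's.
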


\begin{proof} The first property is immediate from the Lipschitz estimate on $u_k$ and the lower bound on $u_1$. The second one can be checked using the equivalent characterization in Lemma \ref{lem:domvar}, which easily passes to the limit and implies (2) by the same argument as is used in the proof. For the final one take any competitor $(E',v_k')$ with $E'-E, v_k'=v_k$ outside $B_R$. Set $\phi$ to be a smooth radially increasing cutoff which vanishes on $B_R$ and is $1$ on $B_{2R}$, and $w_k^i$ to be
\[
 w_k^i=\phi u_k^i + (1-\phi)v_k'.
\]
Let $\W^i=\cup \{w_k^i>0\}$, and use the dilated $\tilde{\W}^i=\W^i/r_i$, $\tilde{w}_k^i = r_i w_k^i(\cdot /r_i)$ as a competitor for $\W$. These have
\[
\int |\n \tilde{w}^i_k - u_k|^2 \leq Cr^n_i
\]
and
\[
\int |\tilde{w}^i_k - u_k|^2 \leq Cr^{n+2}_i.
\]
 We then have that
\begin{align*}
\l_k(\tilde{\W}^i) &\leq \max_{q = \sum_{j=1}^k \a_j \tilde{w}_j^i,\sum_{j=1}^k\a_j^2=1} \frac{\int |\n q|^2}{\int q^2}\\
& \leq  \max_{\a_j} \frac{\sum_{j=1}^k\int \a_j^2 |\n \tilde{w}_j^i|^2 + \sum_{l< j}2\a_j\a_l \n \tilde{w}_j^i\cdot \n \tilde{w}_l^i}{1 - Cr_i^{n+2}}. 
\end{align*}
Now, we have
\[
\int|\n \tilde{w}_j^i|^2 = \l_j(\W) + \int |\n \tilde{w}_j^i|^2 - |\n u_j|^2.
\]
We thus have the right-hand side above is at most $Cr^n_i$ away from
\[
\sum_{j=1}^k \a_j^2 \l_j(\W).
\]
Using property S, we see that for the maximal $\a_j$, we must have $\a_k^2 \geq 1 - C r^n_i$, and $\a_j^2 \leq C r^n_i$ for $j<k$ (the constant depends on $\l_k-\l_j)$. Going back to the original form, we now have (seeing as how only one term has no copies of $\a_j$, $j<k$) that
\[
\l_k(\tilde{\W}^i)\leq \l_k (\W) + \int |\n \tilde{w}_k^i|^2 - |\n u_k|^2 + C(r_i^{n+2} + r_i^{3n/2}).
\]
We learn, then, that
\begin{align*}
F(\W) +\int_{\W}f + \int_{\W^c}g &\leq F(\tilde{\W}^i) \int_{\tilde{\W}^i}f + \int_{\tilde{\W}^i}g \\
&\leq F(\W) + \sum_{k=1}^N \xi_k \int |\n \tilde{w}_k^i|^2 - |\n u_k|^2 + [f(0)-g(0)][|\tilde{\W}^i|-|\W|] + o(r_i^n).
\end{align*}
Here $\xi_k$ are the partial derivatives as defined in the statement of the lemma. Rescaling and passing to the limit (notice that all terms pass to the limit strongly) gives the conclusion.
\end{proof}

\begin{remark}\label{rem:nonS}
	Let us examine what would have happened in (3) if we did not assume property (S). For each $\l_k$, let $E_k$ be the space of corresponding eigenfunctions for $\W$, and $m(k)$ the smallest index for which $\l_{m(k)}=\l_k$. In fact, the computation would still be the same, except we would find that
	\[
	\l_k(\tilde{\W}^i)\leq \l_k (\W) + \min_{A\ss E_k, k = \dim A - 1 + m(k)} \max_{u_*\in A} \int |\n \tilde{w}^i[u_*]|^2 - |\n u_*|^2 + C(r_i^{n+2} + r_i^{3n/2}),
	\]
	where if $u = \sum_j \b_j u_j$, then $w^i[u]= \sum_j \b_j w_j^i$. In the limit, we would then get that $\{v_k\}$ (we abuse the notation, saying that $v\in A\ss E_k$ if it is given by $v= \sum_j \b_j v_j$ and $u=\sum_j \b_j u_j\in A$) have the property that
	\begin{align*}
	\sum_{E_k  \text{distinct}}& \sum_{j=1}^{\dim E_k} \min_{A\ss E_k, \dim A=j}\max_{v\in A}\xi_{j+m(k)-1} \int |\n v'[v]|^2 - |\n v|^2\\
	&  \geq  \xi_0[|\{v_1>0\}\cap B_R|-|E\cap B_R|]  
	\end{align*}
	for all competitors $\{v'_k\}$ with $v_k'-v_k$ compactly supported on $B_R$. Another way of writing the term on the left is
	\[
	\sum_{E_k  \text{distinct}} \sum_{j=1}^{\dim E_k} \xi_{j+m(k)-1} \mu_j( \{\int \n v'_s\cdot \n v'_t - \n v_s \n v_t\}_{t,s=m(k),\ldots,m(k)+\dim E_k -1}),
	\]
	where $\mu_j$ is the $j-th$ smallest eigenvalue of the $\dim E_k$ by $\dim E_k$ matrix in the parentheses. In all of the above, $\xi_j=\p_{\l_k} F(\l_1,\cdots, \l_N)$, as in the last part of the lemma's statement (they are unrelated to constants from Property E, which we do not assume here).
\end{remark}

\begin{lemma}\label{lem:StoE} Property S implies property E with the constants $\xi_k$ given by $\p_{\l_k} F(\l_1,\cdots, \l_N)$.
 
\end{lemma}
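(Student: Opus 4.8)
The plan is to verify the hypothesis of Lemma \ref{lem:domvar}: it suffices to establish the domain variation estimate \eqref{eq:Edomvar} with the constants $\xi_k=\p_{\l_k}F(\l_1(\W),\ldots,\l_N(\W))$ — or, more honestly, the slightly weaker version in which the error is allowed an extra $o(t)r^n$, which is all the second half of the proof of Lemma \ref{lem:domvar} actually uses. Conceptually the content of Property S is exactly that, when $\l_1<\cdots<\l_N<\l_{N+1}$ are all simple, each $\l_k$ with $k\le N$ is a differentiable function of the domain under the perturbations $\phi_t$, so that $F(\W)+\int_\W f+\int_{\W^c}g$ is differentiable with respect to domain variations and \eqref{eq:Edomvar} merely asserts that its derivative vanishes at a minimizer.

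The main estimate is an upper bound on $\l_k(\phi_t^{-1}\W)$. Fix an admissible $\U$ supported on $B_r(x)$, $x\in\p\W$, and set $u_j^t=u_j\circ\phi_t\in H^1_0(\phi_t^{-1}\W)$. Using $\mathrm{span}\{u_1^t,\ldots,u_k^t\}$ as a test space in the Courant--Fischer characterization of $\l_k(\phi_t^{-1}\W)$ — exactly as in the proof of part (3) of Lemma \ref{lem:blowup}, but applied to $\W$ itself rather than to a blow-up — one expands the Rayleigh quotient using the change-of-variables identities already recorded in the proof of Lemma \ref{lem:domvar}: $\int_{\phi_t^{-1}\W}|\n u_j^t|^2=\l_j+O(tr^n)$, $\int_{\phi_t^{-1}\W}\n u_j^t\cdot\n u_l^t=O(tr^n)$ and $\int_{\phi_t^{-1}\W}u_j^t u_l^t=\d_{jl}+O(tr^{n+2})$ for $j\neq l$ (all errors proportional to $\|\n\U\|_\infty$; the extra $r^{2}$ on the mass term uses $|u_j|\le Cr$ on $B_{2r}(x)$). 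Simplicity of the eigenvalues forces a unit combination $\sum_{j\le k}\a_j u_j^t$ with maximal Rayleigh quotient to have $\sum_{j<k}\a_j^2\le C_* tr^n$, with $C_*$ depending on the spectral gaps; hence every off-diagonal contribution, every $j<k$ diagonal correction, and the normalization defect is of higher order, and one obtains
\[
\l_k(\phi_t^{-1}\W)\le\l_k(\W)+\int_{\phi_t^{-1}\W}|\n u_k^t|^2-\int_\W|\n u_k|^2+C_*(t^2+tr^2)r^n .
\]
The non-universal $C_*$ multiplies only the $t^2r^n$ term, so it disappears after dividing by $t$ and sending $t\to0$; this is also where the full strength of Property S enters, since $\l_{N+1}>\l_N$ is what permits $F$ — which may genuinely depend on $\l_N$ — to be differentiated. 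Because $F$ is $C^1$ and increasing in each slot, this upgrades to
\[
F(\phi_t^{-1}\W)\le F(\W)+\sum_{k=1}^N\xi_k\Big(\int_{\phi_t^{-1}\W}|\n u_k^t|^2-\int_\W|\n u_k|^2\Big)+o(t)r^n+C_*(t^2+tr^2)r^n ,
\]
the $o(t)$ coming from the modulus of continuity of $\n F$ applied to the $O(tr^n)$-sized eigenvalue increments (this is the only reason we settle for the weaker form of \eqref{eq:Edomvar}, and it is immaterial since we divide by $t$ below).

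Now insert $\phi_t^{-1}\W$ as a competitor into the minimality inequality; reorganizing the $f$ and $g$ terms by the change-of-variables computations from the proof of Lemma \ref{lem:domvar}, the displayed bound shows that the expression inside the absolute value in \eqref{eq:Edomvar} is bounded above by $o(t)r^n+C_*(t^2+tr^2)r^n$. Applying the same with $-\U$ in place of $\U$, and using that — unconditionally, by the Taylor expansion carried out in the proof of Lemma \ref{lem:domvar} — this expression equals $-t\int_{\p^*\W}\big(\sum_k\xi_k(u_k)_\nu^2-\xi_0\big)\U\cdot\nu\,d\cH^{n-1}+O\big((t^2+r^2t)r^n\|\n\U\|_\infty\big)$, we deduce
\[
\Big|\int_{\p^*\W}\Big(\sum_{k=1}^N\xi_k(u_k)_\nu^2-\xi_0\Big)\U\cdot\nu\,d\cH^{n-1}\Big|\le o(1)r^n+C(t+r^2)r^n
\]
after dividing by $|t|$; letting $t\to0$ leaves the bound $Cr^{n+2}\|\n\U\|_\infty$ with a now universal $C$. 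Choosing $\U=\nu_x q$ for a bump $q$ concentrated near a point $x$ at which every $(u_k)_\nu$ and $\xi_0$ are close to their values at $x$ — exactly the test field used at the end of the proof of Lemma \ref{lem:domvar} — forces $\sum_k\xi_k(u_k)_\nu^2(x)=\xi_0(x)$ there. Since $\cH^{n-1}$-a.e. point of $\p^*\W$ is such a Lebesgue point and $\cH^{n-1}(\p\W\setminus\p^*\W)=0$, Property E holds with $\xi_k=\p_{\l_k}F$.

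The heart of the matter, and the only genuinely new computation, is the Courant--Fischer upper bound: one must expand the Rayleigh quotient carefully enough to see that, thanks to simplicity, everything besides the $k$-th diagonal term $\int|\n u_k^t|^2-\l_k$ — the mass normalization and all cross terms included — is $O(t^2+tr^2)$ times $r^n$. This is the same estimate as in Lemma \ref{lem:blowup}(3); the only difference is that the small parameter is now the perturbation size $t$ and the errors must be uniform in $\U$, which they are since each is controlled by $\|\n\U\|_\infty$ through the Minkowski-content bound of Lemma \ref{lem:mink}.
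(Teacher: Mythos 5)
Your proof is correct, but it takes a genuinely different route from the paper's. The paper proves Lemma \ref{lem:StoE} by blowing up at each point of $\p^*\W$: it invokes the minimality of the blow-up limit from Lemma \ref{lem:blowup}(3), performs a domain variation on the resulting half-plane solutions $v_k=\a_k x_n^-$ to get $\sum_k\xi_k\a_k^2=\xi_0(0)$, and then transfers this back to $\W$ through the weak-$*$ convergence of the measures $\triangle u_{k,j}$ and the Radon--Nikodym theorem; the existence of the limit of the averaged normal derivatives along every subsequence is what yields Property E. You instead perform the first variation directly on $\W$ at scale $t$, establish the criterion \eqref{eq:Edomvar} (in the slightly weakened form with an extra $o(t)r^n$, which you correctly observe is all the converse half of Lemma \ref{lem:domvar} uses), and then simply quote that converse. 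Both arguments ultimately rest on the same Courant--Fischer computation in which simplicity forces the maximizing coefficients onto the $k$-th slot — the paper runs it with small parameter $r_i$ at the blow-up scale, you run it with small parameter $t$ on $\W$ itself. Your route is more direct in that it bypasses the blow-up and measure-convergence machinery, at the cost of leaning on Lemma \ref{lem:domvar}'s converse direction and on the harmlessness of $o(t)$ errors there; the paper's route has the side benefit of producing the classification of blow-ups along the way. One small imprecision: the cross terms in your Rayleigh quotient expansion contribute $O(t^{3/2}r^{3n/2}\|\n\U\|_\infty^{3/2})$ (a product of $\a_j=O((tr^n)^{1/2})$ for $j<k$ with an off-diagonal gradient integral of size $O(tr^n)$), not $O(t^2r^n)$ as your displayed error suggests; this is still $o(t)$ for fixed $r$, so it is absorbed by the $o(t)r^n$ allowance you already built in, but the stated error bound should be adjusted accordingly.
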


\begin{proof} 
We show that for every $x\in \p^* \W$, we have that the limit
\[
\lim_{r\searrow 0} \sum_{k=1}^N\xi_k \1\frac{1}{\cH^{n-1}(\p \W\cap B_r(x))}\int_{B_r(x)\cap \p \W} (u_k)_\nu (x) d\cH^{n-1}\2^2
\]
exists and equals $\xi_0(x)$. Once this is established, the conclusion follows from the Radon-Nikodym theorem.

Set $x=0$. Take any sequence $r_j\searrow 0$; then passing to a further subsequence we have that the blow-ups
\[
u_{k,j}(x):= \frac{u_k(r_j x)}{r_j}\rightarrow v_k(x)
\]
locally uniformly for Lipschitz functions $v_k$. Moreover, we have $\p \W/r_k \rightarrow \p \{v_1>0\}$ locally in the Hausdorff sense, which implies that $\{v_1>0\}=\{x_n<0\}$ in a suitable coordinate system. We also have (up to a further subsequence, using distributional convergence and the fact that the limit measures vanish on $\p B_1$)
\[
 \cH^{n-1}(B_1 \cap \W/r_k)\rightarrow \cH^{n-1}(B_1\cap \{x_n=0\})
\]
and
\[
 \triangle u_{k,j}(B_1) + r_k^2 \l_k \int_{B_1} u_{k,j} \rightarrow  \triangle v_k(B_1).
\]
The odd reflection of each of the $v_k$ across $\{x_n=0\}$ is an entire harmonic function with bounded gradient; it follows that it is affine and of the form $v_k = \a_k x_n^-$. From the minimality property of Lemma \ref{lem:blowup}, it is simple to check (using domain variations) that
\[
 \sum_{k=1}^N \xi_k \a_k^2 = \xi_0(0).
\]
It follows that
\[
 \sum_{k=1}^N \xi_k\1\triangle u_{k,j}(B_1) + r_k^2 \l_k \int_{B_1} u_{k,j}\2^2 \rightarrow \sum_{k=1}^N \xi_k \int_{\{x_n=0\}\cap B_1} \a_k^2 = \xi_0(0)\cH^{n-1}(B_1\cap \{x_n=0\}),
\]
which gives 
 \[
  \lim_{r_j} \sum_{k=1}^N\xi_k \1\frac{1}{\cH^{n-1}(\p \W\cap B_{ r_j})}\int_{B_{ r_j}\cap \p \W} (u_k)_\nu d\cH^{n-1}\2^2= \xi_0(0)
 \]
for a subsequence of every sequence. It follows that the limit exists. 
\end{proof}

\begin{remark}\label{rem:nonS2} Continuing with Remark \ref{rem:nonS}, we examine if we might learn something even if property S is not known. Choose a point $x\in \p^* \W$ to perform blow-ups, and deduce that in this case too we have $v_k=\a_k x_n^-$. Take any $E_k$, and notice that we can find an orthonormal basis for $E_k$ such that $\a_{m(k)}^2 \geq 0$, and $\a_j=0$ for $j>m(k)$. Indeed, we have
	\begin{align*}
	T:&E_k \rightarrow \R\\
	& u = \sum_{j=m(k)}^{m(k)+\dim E_k -1} \b_j \mapsto \sum \b_j \a_j	 
	\end{align*}
	is a linear map, and has a kernel of dimension at least $\dim E_k -1$. Choose the maximum number of the new basis vectors in this kernel, and the final one orthonormal to them.
	
	Note that we are not assuming property E here, and the basis of eigenfunctions selected in this way will have nothing to do with the basis used to write property E in diagonal form. Indeed, unlike that basis, this one will depend on the point $x$ and (a priori, at least) the blow-up sequence being considered.
	
	Let us examine what happens if the image of this map $T$ is $\{0\}$ along every blow-up sequence at every $x\in \p^* \W$. Arguing as above, this implies that the eigenfunction $w=u_{m(k)}$ has the measure $\D w=0$ on $\p \W$; i.e. it satisfies the Neumann condition. This is a contradiction: for example, from integration by parts,
	\begin{align*}
	0&= \int_{\W} \l_k u_1 w - \l_1 u_1 w \\
	 &= \int_{\W} w \D u_1 - u_1 \D (w+1)\\
	 &= \int_{\p^*\W} (w+1) (u_1)_\nu - u_1 w_\nu d\cH^{n-1}\\ 
	 &= - \l_1 \int_{\W} u_1\neq 0.
	\end{align*}
	Thus at least at some $x\in \p^*\W$, there is a blow-up sequence with $\a_{m(k)}\neq 0$.
	
	Let us now consider a competitor where (on each distinct eigenspace $E_k$) only $v_{m(k)}$ is modified to $v'_{m(k)} = v_{m(k)}(x+t\phi(x)e_n)$ (with $\phi$ a nonnegative compactly supported smooth function ), and the others are left unchanged. The matrix in each term of the "minimization property" then takes the particularly simple form
	\[
	\int |\n v'_{m(k)}|^2 -|\n v_{m(k)}|^2 \d_{s,t=m(k)} = O(t^2) - \int_{\{x_n=0\}} \a_{m(k)}^2 t\phi \d_{s,t=m(k)}. 
	\]
	Then we see that
	\[
	-\sum_{E_k \text{distinct}} \xi_{m(k)} \int_{\{x_n=0\}} \a_{m(k)}^2 t\phi \geq - \xi_0 \int_{\{x_n=0\}} t \phi + O(t^2),
	\]
	and sending $t\searrow 0$shows
	\[
	\sum_{E_k \text{distinct}}\xi_{m(k)}\a_{m(k)}^2\leq \xi_0.
	\]
	On the other hand, using negative $t$ gives
	\[
	\sum_{E_k \text{distinct}} \xi_{m(k)+\dim E_k -1}\a_{m(k)}^2\geq \xi_0.
	\] 
	In particular, if $\xi_{m(k)} >\xi_{m(k)+\dim E_k -1}$ for every $E_k$ of dimension greater than one, and we are considering a blow-up sequence where $\a_{m(k)}\neq 0$ for at least one such $E_k$, we obtain a contradiction. 
	
	We have just shown the following: if $F$ has the property that
	\[
	\p_{\l_k} F(\l_1,\cdots, \l_N) > \p_{\l_{k+1}} F(\l_1,\cdots, \l_N)
	\] 
	whenever $\l_k=\l_{k+1}$, then minimizers for $F$ will always enjoy property S.
	\end{remark}

\begin{lemma}\label{lem:blowupreg} Let $\{v_k\}$ be a blow-up limit as above, and assume that either $\{v_1>0\}$ contains or is contained in $\{x\cdot \nu <0\}$. Then there exists a (possibly different) blow-up limit $\{\tilde{v}_k$ with $\tilde{v}_k = \a_k (x\cdot \nu)^-$ for constants $\a_k$ with $c\leq \sum_{i=1}^N \a_i^2\leq C$. Assume, moreover, that property E holds: then
\[
 \sum_{i=1}^N \xi_i \a_i^2 = \xi_0.
\]
\end{lemma}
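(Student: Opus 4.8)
Normalize $\nu=e_n$ and set $H=\{x_n<0\}$. From Lemma~\ref{lem:blowup} and the nondegeneracy of Lemma~\ref{lem:lowerbd} (which passes to blow-ups exactly as in Lemma~\ref{lem:connected}), the $v_k$ are globally Lipschitz with $v_k(0)=0$, harmonic on $\{v_1>0\}$, $v_1\geq 0$, $v_k\equiv 0$ on $\{v_1=0\}$, nondegenerate, and the two-sided density bounds of Lemma~\ref{lem:stronglb} hold for $\{v_1>0\}$. The first point is that it is enough to produce \emph{some} further blow-up $\tilde v_k(x)=\lim_j v_k(\rho_j x)/\rho_j$ ($\rho_j\searrow 0$) of the required form: such further blow-ups — and even further blow-ups of them at other centres, since the effective centres still tend to $0\in\partial\W$ — are again blow-up limits of $\W$ in the sense of Lemma~\ref{lem:blowup}, by a diagonal argument, so they qualify as the ``possibly different blow-up limit'' in the statement. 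I will treat the case $\{v_1>0\}\ss H$ first and then reduce the case $\{v_1>0\}\supseteq H$ to it.

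\emph{Case $\{v_1>0\}\ss H$.} Every $v_k$ vanishes on $\{x_n\geq 0\}$, so the Lipschitz bound and $v_k(x',0)=0$ give $|v_k|\leq C(x_n)^-$ on $\R^n$. Apply the Alt--Caffarelli--Friedman monotonicity formula (as in Lemma~\ref{lem:connected}) to the nonnegative subharmonic pair $(v_1,(x_n)^+)$, which has disjoint supports; the Lipschitz bound makes the ACF functional finite, and it is monotone. On a further blow-up $\{\tilde v_k\}$ at $0$ the ACF functional of $(\tilde v_1,(x_n)^+)$ is constant in $r$, so the equality case forces $\tilde v_1$ to be $1$-homogeneous; then $\tilde v_1=r\phi(\theta)$ with $\phi$ a first Dirichlet eigenfunction, of eigenvalue $n-1$, of $\{\phi>0\}\ss S^{n-1}\cap H$. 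Since the open lower hemisphere already has first eigenvalue $n-1$, strict monotonicity of the eigenvalue under inclusion forces $\{\phi>0\}$ to be the whole lower hemisphere, i.e.\ $\tilde v_1=\a_1(x_n)^-$ with $\a_1>0$. Each remaining $\tilde v_k$ is then harmonic on $H$, vanishes on $\partial H$, and is Lipschitz, so its odd reflection across $\{x_n=0\}$ is an entire harmonic function with bounded gradient, hence affine, hence (being odd and vanishing at $0$) of the form $-\a_k x_n$ for some $\a_k\in\R$; thus $\tilde v_k=\a_k(x_n)^-$. The bound $\sum\a_k^2\leq C$ follows from $|\n\tilde v_k|\leq C$, and $\sum\a_k^2\geq c$ from the nondegeneracy $\sup_{B_1}\sum|\tilde v_k|\geq c_0$.

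\emph{Case $\{v_1>0\}\supseteq H$.} Now $v_1$ is a positive harmonic function on $H$, Lipschitz, with $v_1(0)=0$. Pass to a further blow-up $\tilde v_1$ at $0$: by the Harnack inequality $\tilde v_1>0$ on $H$, so $\{\tilde v_1>0\}\supseteq H$, while Lemma~\ref{lem:stronglb} gives $|\{\tilde v_1=0\}\cap B_r|\geq \d|B_r|$, so $\{\tilde v_1=0\}$ has nonempty interior. Taking a maximal ball $B^*\ss\{\tilde v_1=0\}$, it touches $\partial\{\tilde v_1>0\}$ at some $z^*$, and blowing $\tilde v_1$ up at $z^*$ yields a blow-up limit whose positivity set is contained in a half-space (the one tangent to $B^*$ at $z^*$), so we are reduced to the previous case; the density lower bound, together with $H\ss\{\tilde v_1>0\}$, is what pins the relevant normal to $\nu$. (Matching the normal is the main technical point; alternatively one applies the boundary Harnack inequality to $\tilde v_1$ and $(x_n)^-$ to obtain $\tilde v_1\leq C(x_n)^-$ near $\partial H$ and then blows up once more.) This is the step I expect to be the main obstacle: unlike the ``$\ss$'' case there is no obvious subharmonic competitor supported in the complement of $\{v_1>0\}$, so the ACF formula does not apply directly and the reduction must be carried out by hand.

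Finally, if property E holds for $\W$, then the blow-up limit $\{\tilde v_k\}$ satisfies property E by Lemma~\ref{lem:blowup}(2), i.e.\ $\sum_k\xi_k(\tilde v_k)_\nu^2=\xi_0(0)$ on $\partial\{\tilde v_1>0\}=\{x_n=0\}$. Since $\triangle(\a_k(x_n)^-)=\a_k\,\cH^{n-1}\mres\{x_n=0\}$, the Borel density $(\tilde v_k)_\nu$ of $-\triangle\tilde v_k$ equals $-\a_k$, and therefore $\sum_k\xi_k\a_k^2=\xi_0(0)=\xi_0$, as claimed.
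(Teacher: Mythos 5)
Your treatment of the case $\{v_1>0\}\ss H$ is a legitimate alternative to the paper's citation of \cite[Lemma 11.17]{CS}: the ACF functional of the disjointly supported subharmonic pair $(v_1,(x_n)^+)$ is bounded and monotone, hence constant on a further blow-up at $0$, and the rigidity in the equality case (Friedland--Hayman) forces $\tilde v_1=\a_1(x_n)^-$; the remaining components then follow by odd reflection and Liouville exactly as in the paper, and your closing deduction of $\sum_k\xi_k\a_k^2=\xi_0$ from property E matches the paper's.

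The case $H\ss\{v_1>0\}$, however, contains a genuine gap --- the one you flag yourself. Blowing up at the touching point $z^*$ of a maximal ball $B^*\ss\{\tilde v_1=0\}$ produces a limit centered at $z^*\neq 0$, i.e.\ (after unwinding the diagonal argument) a limit of the $u_k$ along \emph{converging centers} tending to $0$, not a blow-up at $0$ in the sense of Lemma \ref{lem:blowup}; such a limit does not qualify as the ``possibly different blow-up limit'' required by the statement, and in the application (Lemma \ref{lem:goodcfg}) the center must remain the original contact point. Moreover the half-space obtained at $z^*$ has the inward normal of $B^*$, which has no a priori relation to $\nu$. Your proposed repair via boundary Harnack comparing $\tilde v_1$ with $(x_n)^-$ in $H$ is circular: that comparison requires both functions to vanish on $\p H$ near $0$, whereas you only know $\tilde v_1$ vanishes on a positive-density subset of $\{x_n\geq 0\}$ --- the vanishing of $\tilde v_1$ on $\p H$ is essentially what must be proved. (A smaller issue: positive Lebesgue density of $\{\tilde v_1=0\}$ does not by itself give nonempty interior; one must add that $\p\{\tilde v_1>0\}$ has locally finite $\cH^{n-1}$-measure, hence Lebesgue measure zero, so the zero set cannot equal its own boundary.) The paper's route avoids all of this: by \cite[Remark 11.18]{CS} one first obtains the one-sided expansion $v_1=\a x_n^-+o(|x|)$ \emph{on $H$}; a second blow-up at $0$ then yields a $v^*$ equal to $\a x_n^-$ on $H$, and the positive density of $\{v_1=0\}$ together with the linear growth forces $v^*=\a x_n^-$ globally (\cite[Lemma 12.8]{CS}), whence $v_1=\a x_n^-+o(|x|)$ also outside $H$ and $|B_r\cap(\{v_1>0\}\sm H)|=o(r^n)$. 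It is this measure estimate, not a tangent-ball argument at a displaced center, that feeds the diagonal construction of $\tilde v_k$.
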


\begin{proof}
 Denote by $\W_\8=\{v_1>0\}$ and $H=\{x\cdot\nu <0\}$. Then $v_1$ is a nonnegative, globally Lipschitz continuous harmonic function with $v_1=0$ on $\p \W_\8$, and satisfies $\max_{B_r(x)} v_1(x)\geq c r$ for any $x\in \W_\8$.
 
We now observe that this implies that $|B_r \cap (\W_\8 \triangle H)| = o(r^n)$. For $\W_\8\ss H$, this follows from Lemma 11.17 in \cite{CS} applied to $v_1$. In the case of $H\ss \W_\8$, we have from Remark 11.18 in \cite{CS} that $v_1 = \a x_n^- + o(|x|)$ on $H$. Performing a second blow-up at the origin, the function $v^*$ to which $v_1(r \cdot)/r$ converges along a subsequence has the same properties as $v$, and in addition vanishes on $\p H$. This, the linear growth estimate on $v_1$, and the positive density of $\{v_1 = 0\}$ implies that $v^* = \a x_n^-$ (see Lemma 12.8 in \cite{CS}, for example). This, in turn, gives that $v_1 = \a x_n^- + o(|x|)$ near $x$ (including outside of $H$). Together with the linear growth estimate imply $|B_r \cap (\W_\8 \sm H)| = o(r^n)$

Let $r_j$ be the sequence for which $v_j = \lim u_k(x r_j)/r_j:=u_{k,r_j}$. For each $\r<1$, we have that $|B_\r \cap (\{v_1>0\}\triangle \{u_{1,j}>0\}|$ goes to $0$. Thus for each $i$, we may find $j(i)$ such that $j(i)\geq j(i-1)$ and for all $j\geq j(i)$
\[
 |B_{2^{-l}} \cap (\{v_1>0\}\triangle \{u_{1,j}>0\}| \leq |B_{2^{-l}}| /i
\]
for each $l=1,\ldots,i$. Then the sequence $u_{k,r_{j(i)}/i}\rightarrow \tilde{v}_j$, which inherits the property that
\[
 |B_{2^{i}} \cap \{\tilde{v}_1>0\}\triangle H|=0.
\]
As the functions $\tilde{v}_k$ are harmonic on $H$ and vanish on $\p H$, their odd reflections are entire harmonic functions; combined with the Lipschitz bound and Liouville's theorem this implies the first conclusion.

The second conclusion is immediate from Lemma \ref{lem:blowup}.
\end{proof}

\begin{remark} A consequence of our main theorem is that if one blow-up is a half-plane, then all of the blow-ups are unique, even without property S. Hence a posteriori, we could take $\tilde{v}_k=v_k$ in the above.
\end{remark}

\begin{lemma}\label{lem:startcfg} Let $\W$ be a minimizer with $0\in \p \W$. Then for every $\e>0$, there are $\d,\r$ (depending on $\e$ and universal quantities) such that if $r<\d$ and
\[
 B_r \cap \p \W \ss \{|x\cdot \nu|<\d r\}\qquad  r \nu/2 \notin \W
\]
for a unit vector $\nu$, then there are numbers $\{\a_k\}$ with $c\leq \sum_{i=1}^N \a_i^2\leq C$ such that
\[
 \sup_{B_{\r r}}|\a_k (x\cdot \nu)_- -u_k|\leq \r r \e.
\]
\end{lemma}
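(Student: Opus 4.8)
The plan is to argue by compactness, extracting a half-plane blow-up from a hypothetical sequence of counterexamples and using the results already established for blow-up limits. Suppose the statement fails for some fixed $\e>0$: then there is a sequence of minimizers $\W_m$ (with associated functionals $F_m$, but by Lemma \ref{lem:con2} we may take $\eta=0$, or at least pass to a limiting functional $F$ as in the proof of Lemma \ref{lem:connected}), points $0\in\p\W_m$, radii $r_m\to 0$, unit vectors $\nu_m\to\nu$ (up to rotating we may fix $\nu=e_n$), and parameters $\d_m\to 0$ such that
\[
B_{r_m}\cap \p\W_m\ss\{|x_n|<\d_m r_m\},\qquad r_m e_n/2\notin\W_m,
\]
yet for \emph{every} choice of numbers $\{\a_k\}$ with $c\le\sum\a_k^2\le C$ and \emph{every} scale $\r$ (ranging over the countable set of $\r$ we will eventually want), the conclusion $\sup_{B_{\r r_m}}|\a_k(x_n)_- - u_k|\le \r r_m\e$ fails. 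Rescale by setting $u_k^m(x):=u_k(r_m x)/r_m$; by the Lipschitz bound $|\n u_k|\le C$ (from \cite{BMPV}) these are uniformly Lipschitz, so along a subsequence $u_k^m\to v_k$ locally uniformly, with $v_1\ge 0$, the $v_k$ harmonic on $\{v_1>0\}$, vanishing on $\{v_1=0\}$, $|\n v_k|\le C$, and satisfying the nondegeneracy $\sup_{B_\r(x)}\sum|v_k|\ge c_0\r$ (Lemma \ref{lem:lowerbd} / Remark \ref{rem:disconlb} survive the rescaling and limit, using the $L^1_{\rm loc}$ convergence of the positivity sets as in the proof of Lemma \ref{lem:connected}).

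Next I would identify the limiting domain. The flatness hypothesis $B_{r_m}\cap\p\W_m\ss\{|x_n|<\d_m r_m\}$ with $\d_m\to 0$ forces $\p\{v_1>0\}\cap B_1\ss\{x_n=0\}$ in the Hausdorff limit, so $\{v_1>0\}\cap B_1$ is a union of connected components of $B_1\sm\{x_n=0\}$, i.e. it is $\{x_n<0\}\cap B_1$, $\{x_n>0\}\cap B_1$, $B_1$, or $\emptyset$. Nondegeneracy rules out $\emptyset$. The condition $r_m e_n/2\notin\W_m$ gives, after rescaling, $e_n/2\notin\{u_1^m>0\}$ for a set of positive density around $e_n/2$ (using the density lower bound of Corollary \ref{cor:diam}), hence $v_1(e_n/2)=0$; this rules out $\{x_n>0\}$ and $B_1$. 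Therefore $\{v_1>0\}\cap B_1=\{x_n<0\}\cap B_1$. Now each $v_k$ is harmonic on $\{x_n<0\}$, vanishes on $\{x_n=0\}$, and is globally Lipschitz; its odd reflection across $\{x_n=0\}$ is then an entire harmonic function with bounded gradient, hence affine by Liouville, so $v_k=\a_k (x_n)^-$ for constants $\a_k$. Nondegeneracy gives $\sum\a_k^2\ge c$, and the Lipschitz bound gives $\sum\a_k^2\le C$; this is exactly the argument used in Lemma \ref{lem:blowupreg}, which I would cite.

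Finally I would derive the contradiction. Fix any $\r\in(0,1/2)$ (or run over a countable dense set of such $\r$, choosing the subsequence diagonally beforehand). By the locally uniform convergence $u_k^m\to v_k=\a_k(x_n)^-$ on $\overline{B_{1/2}}$, for $m$ large we have $\sup_{B_\r}|\a_k (x_n)^- - u_k^m|<\r\e$ for each $k=1,\dots,N$; rescaling back (multiplying by $r_m$ and noting $u_k^m(x)=u_k(r_m x)/r_m$) gives $\sup_{B_{\r r_m}}|\a_k (x\cdot\nu)^- - u_k|<\r r_m\e$ with these particular $\a_k$, which lie in $[c,C]$ in the required sense. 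This contradicts the assumption that the conclusion failed for $\W_m$ at scale $\r$, for all large $m$. Hence the statement holds, with $\r$ any fixed fraction (say $1/4$) and $\d$ the threshold extracted from the compactness argument.

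The main obstacle is the careful handling of the two geometric side conditions in the limit: ensuring that $\d_m\to 0$ really does collapse $\p\{v_1>0\}$ onto the hyperplane (this needs the density estimates and the nondegeneracy to prevent the positivity set from degenerating or the boundary from escaping the slab), and that the pointwise condition $r_m e_n/2\notin\W_m$ passes to $v_1(e_n/2)=0$ — the latter is not automatic from Hausdorff convergence of boundaries alone and genuinely uses the uniform positive-density lower bound $|B_\rho(x)\cap\W|\ge c_0\rho^n$ from Corollary \ref{cor:diam} to convert "$x\notin\W$" into a statement stable under the $L^1_{\rm loc}$-convergence of domains. Everything else is routine given the blow-up machinery of Section \ref{sec:blowup}.
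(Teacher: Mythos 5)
Your overall architecture (contradiction, rescale by $r_m$, pass to a limit $v_k$ that vanishes on $\{x_n\geq 0\}$ and is harmonic on $\{x_n<0\}$, reflect, read off linear behavior) is the same as the paper's. But there is one genuine error in the step where you identify the limit: you claim that the odd reflection of $v_k$ is an \emph{entire} harmonic function with bounded gradient, invoke Liouville, and conclude $v_k=\a_k(x_n)^-$ exactly. That is not available here. Unlike in Lemma \ref{lem:blowupreg}, where one blows up a single minimizer along $r\to 0$ and the limit lives on all of $\R^n$, in this lemma each minimizer $\W_m$ contributes a \emph{single} scale $r_m$, and the flatness hypothesis controls $\p\W_m$ only inside $B_{r_m}$. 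After rescaling, the limit functions are controlled only on a fixed ball: the reflection is harmonic on $B_1$, not on $\R^n$, and there is no reason for $v_k$ to be exactly linear (it can be $\a_k(x_n)^-$ plus any higher-order harmonic correction odd in $x_n$). The paper instead uses the interior Taylor expansion of a bounded harmonic function on $B_1$, namely $|\tilde v_k(x)-x\cdot\n\tilde v_k(0)|\leq C|x|^2$ on $B_{1/2}$ with $\n\tilde v_k(0)=\a_k e_n$, and then the quadratic error $C\r^2$ beats $\e\r$ only if $\r<\e/C$.

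This matters for your final step: your choice ``$\r$ any fixed fraction (say $1/4$)'' is wrong, because without exact linearity the uniform convergence $u_k^m\to v_k$ on $B_{1/2}$ only yields $\sup_{B_\r}|\a_k(x_n)^- - u_k^m|\leq C\r^2 + o(1)$, which is below $\e\r$ only for $\r$ small depending on $\e$. The lemma statement does allow $\r=\r(\e)$, so the fix is exactly the paper's: run the contradiction for a fixed $\r$, obtain the Taylor expansion of the reflected limit, and take $\r<\e/C$ at the end. With that replacement your argument goes through; the remaining points (how $\d_m\to 0$ and $r_m\nu/2\notin\W_m$ force $v_k\equiv 0$ on $\{x_n\geq 0\}$, and the bounds $c\leq\sum\a_k^2\leq C$ from nondegeneracy and the Lipschitz estimate) are handled correctly, although the passage from $r_m e_n/2\notin\W_m$ to $v_k=0$ on the upper half-ball is more simply seen from connectivity of $B_{r_m}\cap\{x_n>\d_m r_m\}$, which contains no boundary points, than from the density estimates.
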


\begin{proof}
 We argue by contradiction. Assume that the conclusion fails; then (for any $\r\in (0,1)$) there is a sequence $F_j$ of functionals and $\W_j$ of $F_j$-minimizers, such that $0\in \p \W_j$, along with $r_j,\d_j\searrow 0$ such that (choosing coordinates $x=(x',x_n)$ and performing a rotation to make $\nu_j = e_n$, and $(0,-r_j/2)\in \W_j$)
 \[
  B_{r_j} \cap \p \W_j \ss \{|x_n|<\d_j r_j\},
 \]
but
\[
 \sup_{B_{\r r_j}}\frac{|\a_k (x_n)^- -u_{k,j}|}{r_j\r}> \e.
\]
for any choice of constants $\a_k$ (it is clear from our universal upper and lower bounds on the eigenfunctions that this is trivially the case if the condition $c\leq \sum_{i=1}^N \a_k^2 \leq C$ is violated, for universal $c,C$).

Consider then the functions $v_{k,j}(x) = u_{k,j}(r_k x)/r_k$. These are defined on $B_2$, and have $|\n v_{k,j}|\leq C$ independently of $j$, while $v_{k,j}=0$ outside $\W_k/r_k$. Passing to a subsequence, we have that $v_{k,j}\rightarrow v_k$ uniformly on $\bar{B}_1$, and $v_k=0$ on $B_1 \cap \{x_n\geq 0$. Furthermore, as $\l_N(\W_k)$ is universally bounded, we have for $x\in B_1\cap \W_k/r_k$
\[
 |\triangle v_{k,j}(x)| = r_k |\triangle u_{k,j}(r_k x)|\leq C r_k^2.
\]
From this it follows that $v_k$ is harmonic on $B_1 \cap \{x_n<0\}$. Finally, we have that
\[
 \sup_{B_{\r}}\frac{|\a_k (x_n)^- -v_k|}{\r}\geq \e
\]
for every set of constants $\{\a_k\}$.

Now, let $\tilde{v}_k$ be the odd reflections of $v_k$:
\[
 \tilde{v}_k (x',x_n)=\begin{cases}
              v_k(x',x_n) &x_n\leq 0\\
              v_k(x',-x_n) &x_n>0.
             \end{cases}
\]
These are harmonic functions on $B_1$ which are bounded by a universal constant $C$, and so admit the estimate
\[
 |\tilde{v}_k(x) - x\cdot \n \tilde{v}_k|\leq C|x|^2
\]
for $x\in B_{1/2}$, with $\n \tilde{v_k} =\a_k e_n$ for some $\a_k$. If $\r <\e/C$, we have arrived at a contradiction.
\end{proof}

The next lemma, while requiring Property E, does not depend quantitatively on it.

\begin{lemma}\label{lem:goodcfg} Let $\W$ be as in Lemma \ref{lem:startcfg}. If property E holds, then we may take the values $\a_k$ to satisfy
\[
 \sum_{i=1}^N \xi_k \a_k^2 = f(0)-g(0)
 \]
 The values $\d,\r$ remain universal. 
\end{lemma}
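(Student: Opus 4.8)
The plan is to combine the conclusion of Lemma \ref{lem:startcfg} with the free boundary condition from property E, encoded in the blow-up limit via Lemma \ref{lem:blowupreg}, using the same contradiction-compactness scheme that proved Lemma \ref{lem:startcfg}. The point is that Lemma \ref{lem:startcfg} already gives, after fixing $\e$, universal $\d,\r$ and constants $\a_k$ with $c\leq \sum_i \a_i^2\leq C$ and $\sup_{B_{\r r}}|\a_k(x\cdot\nu)^- - u_k|\leq \r r \e$. The only missing piece is that these particular $\a_k$ satisfy $\sum_i \xi_i \a_i^2 = f(0)-g(0)$; in general the $\a_k$ produced by that lemma satisfy this only approximately.

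First I would argue that if this approximate normalization could be made to fail by a definite amount along a sequence, then one can extract a blow-up limit that violates property E. Concretely: suppose the lemma fails. Then there is a sequence of functionals $F_j$, minimizers $\W_j$ with $0\in\p\W_j$, radii $r_j\to 0$ and flatness parameters $\d_j\to 0$ with $B_{r_j}\cap\p\W_j\ss\{|x_n|<\d_j r_j\}$ and $(0,-r_j/2)\in\W_j$, but such that \emph{every} choice of $\a_k$ achieving $\sup_{B_{\r r_j}}|\a_k(x_n)^- - u_{k,j}|\leq \r r_j\e$ (this set is nonempty by Lemma \ref{lem:startcfg} once $\e$ is the given tolerance and $\r,\d$ are shrunk) fails $\sum_i\xi_i^j\a_i^2 = f_j(0)-g_j(0)$ by at least some fixed $\e_0>0$. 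Rescaling $v_{k,j}(x)=u_{k,j}(r_j x)/r_j$ on $B_2$, one passes (along a subsequence) to uniform limits $v_k$ which are harmonic on $B_1\cap\{x_n<0\}$, vanish on $B_1\cap\{x_n\geq 0\}$ (from flatness plus nondegeneracy, exactly as in Lemma \ref{lem:startcfg}), and by odd reflection plus Liouville are of the form $v_k=\a_k(x_n)^-$ with $c\leq\sum\a_k^2\leq C$. Also $\xi_k^j\to\xi_k$, $f_j(0)-g_j(0)\to\xi_0$, with the $\xi_k$ still bounded above and below.

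The key step is then to show $\sum_i\xi_i\a_i^2=\xi_0$ for this limiting configuration. Here I would invoke Lemma \ref{lem:blowupreg} (or rather its proof): since the limiting positivity set is exactly $H=\{x_n<0\}$, which both contains and is contained in $H$, property E for $\W_j$ passes to the blow-up, giving $\sum_i\xi_i(v_k)_\nu^2=\xi_0$ on $\p H$; but $(v_k)_\nu=\a_k$, so $\sum_i\xi_i\a_i^2=\xi_0$. (One has to be slightly careful that property E is being used only qualitatively — it holds for each $\W_j$ — and that the Borel densities $(u_k)_\nu$ for $\W_j$, whose $L^2$-averages over $\p\W_j\cap B_{r_j}$ converge to $\a_k$ by the distributional convergence $\triangle v_{k,j}\to\triangle v_k$ as in Lemma \ref{lem:StoE}, carry the relation to the limit.) Now $\{v_k\}=\{\a_k(x_n)^-\}$ is a configuration satisfying the desired normalization \emph{and} approximating $u_{k,j}$ to order $\e$ at scale $\r r_j$ for $j$ large — so the $\a_k$ witness the conclusion for $\W_j$, contradicting the assumed failure by $\e_0$.

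The main obstacle is the bookkeeping around which set of $\a_k$'s is being quantified over: the statement asserts that \emph{some} valid choice of approximating $\a_k$ also satisfies the exact normalization, so one must check that the blow-up limit's coefficients are genuinely attainable as the $\a_k$ in Lemma \ref{lem:startcfg}'s conclusion for finite $j$, i.e. that the approximation bound $\sup_{B_{\r r_j}}|\a_k(x_n)^- - u_{k,j}|\leq \r r_j\e$ survives passing from the limit back to large finite $j$ (which it does, by uniform convergence $v_{k,j}\to v_k$ on $\bar B_1$ and choosing $\r$ slightly smaller than in Lemma \ref{lem:startcfg} to absorb the error, exactly as $\r<\e/C$ was chosen there). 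A secondary subtlety is ensuring the limiting $\xi_k$ inherit the uniform two-sided bounds so that the final configuration still lies in the admissible class $c\leq\sum\a_k^2\leq C$; this follows from the standing assumption $\l\leq\p_{\xi_k}F\leq\l^{-1}$, which is stable under the uniform convergence $F_j\to F$.
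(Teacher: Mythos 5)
Your strategy is genuinely different from the paper's. The paper does not run a compactness argument at all: it takes the single given minimizer, assumes $\sum_k \xi_k\a_k^2 > \xi_0+\e/2$ for the $\a_k$ produced by Lemma \ref{lem:startcfg}, builds an explicit sliding family of barriers $\phi_t$, locates a touching point $x\in\p\{w_1>0\}$ where the rescaled eigenfunctions are bounded below by $\a_k\phi$ with $\phi$ smooth and $\p_\nu\phi = 1+O(\e_*/\e)$, and only \emph{then} blows up at that touching point, where Lemma \ref{lem:blowupreg} applies (the positivity set contains a tangent ball) and yields $\xi_0=\sum\xi_i|\n v_i|^2\geq\sum_{i}\xi_i\a_i^2(1-C\e_*/\e)$, a contradiction.

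The central step of your argument has a genuine gap. You pass property E to your diagonal limit by citing the distributional convergence $\triangle v_{k,j}\to\triangle v_k$, i.e.\ weak convergence of the normal-derivative densities, so that $\fint_{B_{r_j}\cap\p\W_j}(u_k)_\nu\to\a_k$. But the pointwise a.e.\ identity $\sum_k\xi_k^j((u_k)^j_\nu)^2=\xi_0^j$ combined with \emph{weak} convergence of $(u_k)^j_\nu$ gives, by Jensen's inequality, only
\[
\xi_0=\lim_j \fint \sum_k\xi_k^j\bigl((u_k)^j_\nu\bigr)^2 \;\geq\; \sum_k\xi_k\Bigl(\lim_j\fint (u_k)^j_\nu\Bigr)^2=\sum_k\xi_k\a_k^2,
\]
i.e.\ one of the two required inequalities. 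The reverse inequality requires excluding oscillation or concentration of the normal derivatives along $\p\W_j$ near the origin, and that is precisely what the paper's barrier/touching-point construction (equivalently, the viscosity conditions (4)--(5) in the definition of $\cQ$, or the domain-variation characterization \eqref{eq:Edomvar} together with strong $H^1_{\mathrm{loc}}$ convergence of the gradients) is there to supply. Lemma \ref{lem:blowupreg}, which you invoke, cannot be applied off the shelf either: it concerns blow-ups of a \emph{single} minimizer, whereas your limit is a diagonal limit over a sequence of different minimizers of different functionals, so its proof (which rests on Lemma \ref{lem:blowup}(2)) would have to be re-run uniformly in $j$. A secondary error: your limit $v_k$ is defined only on $B_1$ (or $B_2$), so odd reflection plus Liouville does not force $v_k=\a_k(x_n)^-$ globally; as in Lemma \ref{lem:startcfg} one only gets a first-order Taylor expansion at the origin, which is repairable but means the free boundary condition must be extracted at the single point $0$ rather than read off from an exact half-plane solution.
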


\begin{proof}
Fix $\e>0$. Take a minimizer $\W$ as stated, with 
\[
 B_r \cap \p \W \ss \{|x\cdot \nu|<\d r\}
\]
for $r<\min\{\d,r_*\}$, with $\d=\d(\e_*)$ as in Lemma \ref{lem:startcfg} for $\e_*<\e/2$ and $\r_*$ depending only on $\e$ and universal quantities and to be chosen shortly. Now extract the values $\a_k$ given by that lemma, and assume that
\[
  \sum_{i=1}^N \xi_k \a_k^2 > \xi_0 +\e/2.
\]
We will now show this leads to a contradiction. The other inequality
\[
  \sum_{i=1}^N \xi_k \a_k^2 < \xi_0 -\e/2.
\] 
results in a contradiction by an analogous argument, and this will establish the 
conclusion.

From Lemma \ref{lem:startcfg}, we have that on $B_{\r r}$,
\[
 |u_k - \a_k (x\cdot \nu)^-| \leq \r r\e_*.
\]
First, we distinguish between indices $A = \{k: |\a_k|\leq \frac{\e}{2N}$ and $B = \{1,\ldots,N\}\sm A$. Note that for $\e$ small enough, $B$ is nonempty. After switching the sign of some of the $u_k$, we may assume that for every $k\in B$, $\a_k>\e/2N$. Let $w_k(x) = u_k(\r r x)/r\r$ be rescalings of the eigenfunctions, which satisfy
\[
 |\triangle w_k| \leq C (\r r)^2\leq C r_*^2
\]
on $\W\cap B_1$. We have
\begin{equation}\label{eq:ELcfg1}
 w_k(x) \geq \a_k (x\cdot \nu)^-  - \e_* \geq \a_k \1(x\cdot \nu)^-  - 2N\frac{\e_*}{\e}\2 
\end{equation}
on $B_1$ for each $k$.

Choose coordinates $(x=(x',x_n)$ such that $x\cdot \nu=x_n$. Fix a smooth function $\psi: \R^{n-1}\rightarrow [0,1]$ with $\psi(0)=1$ and $\psi$ supported on $B_{1/2}$, and $|\psi|+|\n \psi|+|\D^2\psi|\leq C_*$. Let $\phi_t$ be constructed as follows:
\[
 \phi_t(x) = (1 - \frac{20N\e_*}{\e}) (x_n - t\psi(x'))^- - 3N\frac{\e_*}{\e} - \frac{10 N C_*\e_*}{\e}(x_n - \frac{10N\e_*}{\e})^- (1-x_n) .
\]
Let us summarize the relevant properties of this family of functions. First, $w_k > \a_k \phi_0$ on $B_1$, as $\phi_0$ is strictly smaller than the right-hand side of \eqref{eq:ELcfg1}. Next, if $t\leq \frac{10 N \e_*}{\e}$ and $x_n<-\frac{1}{2}$ we have that
\[
 \phi_t(x)< (1 - \frac{20N\e_*}{\e}) [(x_n)^- + t] - 2N\frac{\e_*}{\e}
 \leq (x_n)^- -  \frac{20N\e_*}{\e}t - 2N\frac{\e_*}{\e},
\]
from which it follows that $w_k > \a_k \phi_t$ on $\p B_{1}$. Still for $t\leq \frac{10 N \e_*}{\e}$, we have that
\[
 \triangle \phi_t \geq -t C_* + 20N C_* \frac{\e_*}{\e} \geq 10 N C_*\frac{\e_*}{\e}
\]
on $x_n <\frac{10 N \e_*}{\e}$. For $x_n\geq t$, we easily see that $\a_k \phi_t < w_k$. When $t=\frac{10N\e_*}{\e}$, we have
\[
 \phi_t (0,2N\e/\e_*) = 5 N \e_*/\e - O(\e_*^2/\e^2)>0,
\]
so $\a_k \phi_t>w_k$ at that point. Finally, we have
\[
 \n \phi_t = e_n + O(\e_*/\e).
\]

Let $\bar{t}$ be
\[
 \bar{t}=\sup\{t: w_k >\a_k \phi_t \text{ on } B_1 \quad \forall k\in B\}.
\]
From the above observations, we have that $\bar{t}\in (0,\frac{10N\e_*}{\e})$. Take a point $x\in \{\a_k \phi_{\bar{t}} = w_k\}$ for some $k\in B$. By the discussion above, $x\notin \p B_1$. Moreover, provided we take $r_*^2 < C(\e_*/\e)^2$, $x$ cannot lie in $\{w_1>0\}$, for we would then have $\D (\a_k \phi_{\bar{t}} - w_k)>0$ at a local maximum. If $x\notin \p \{w_1 >0\}$, we also run into a contradiction, as then $w_k\equiv 0$ in a neighborhood of $x$, while $\n \phi_{\bar{t}}(x)\neq 0$. To summarize, we have discovered a point $x\in \p\{w_1>0\}$ such that on a small neighborhood of $x$, $w_k \geq \phi \a_k$ for a smooth function $\phi$ with $\p_{\nu_x} = 1 + O(\e_*/\e)$, where $\nu_x$ is the normal vector to $\{\phi=0\}$ at $x$.

Now perform a blow-up of $\{w_k\}$ at $x$, as per Lemma \ref{lem:blowup}. We have
\[
 \frac{w_k(x+s\cdot}{s}\rightarrow v_k
\]
along a subsequence, and $v_k(y)\geq \n \phi(x)\cdot y $ for $k\in B$. Applying Lemma \ref{lem:blowupreg} (and possibly changing sequences), we learn that
\[
 \xi_0 = \sum_{i=1}^N \xi_i |\n v_i|^2 \geq \sum_{i\in B} \xi_i \a_i^2 \geq \xi_0 +\e/2 - C\e_*/\e,
\]
which gives a contradiction if $C\e_*/\e^2$ is chosen small enough.
%
\end{proof}

\section{Verifying Property E} \label{sec:propE}

In this section we show that minimizers of $F$ satisfy property $E$, by some approximation arguments. The regularity arguments to follow will only rely on the following properties:

\begin{definition} For a natural number $N$, a constant $Q>1$, positive numbers $\{\xi_k\}_{k=1}^N$ with $|\xi_k|\leq Q$, and a Lipschitz function $\xi_0$  with $|\n \xi_0|\leq Q$ and $|\xi_0 -1|\leq Q^{-1}$, we say that an open set $\W\ss B_1$ with $0\in \p \W$ and a collection of functions $\{u_k\}_{k=1}^N$ are in $\cQ(N,Q,\{\xi_k\}_{k=0}^N)$ if the following hold:
\begin{enumerate}
 \item The functions $u_k$ satisfy $u_k=0$ outside $\W$ and $|\n u_k|\leq Q$.
 \item The functions $u_k$ satisfy
 \[
  \sup_{B_r(x),k}|u_k|\geq Q^{-1}r
 \]
for every $B_r(x)\ss B_1$ with $x\in \W$.
 \item On $\W$, we have $|\triangle u_k|\leq Q$.
 \item For every $x\in  B_1\cap \p\W$ for which there is a ball $B_r(y)\ss \W$ with $\{x\}=\p B_{r}(y)\cap \p \W$, and any $\b_k\geq 0$ for which
 \[
  |u_k(z)|\geq \b_k (\frac{y-x}{|x-y|}\cdot (z-x))^+ + o(|z-x|),
 \]
 we have that $\sum_{k=1}^N \xi_k \b_k^2 \leq \xi_0(x)$
 \item For every $x\in  B_1\cap \p\W$ for which there is a ball $B_r(y)\ss B_1\sm \W$ with $\{x\}=\p B_{r}(y)\cap \p \W$, and any $\b_k\geq 0$ for which
 \[
  |u_k(z)|\leq \b_k (\frac{y-x}{|x-y|}\cdot (z-x))^- + o(|z-x|),
 \]
 we have that $\sum_{k=1}^N \xi_k \b_k^2 \geq \xi_0(x)$.
\end{enumerate}
We say that $(\W,\{u_k\}_k=1^N)\in \cQ(N,Q)$ if there are $\{\xi_k\}_{k=0}^N$ with the above properties for which  $(\W,\{u_k\}_k=1^N)\in \cQ(N,Q,\{\xi_k\}_{k=0}^N)$.
\end{definition}

\begin{remark} Our results would be equally valid if $\xi_0$ was merely H\"older continuous, the coefficients $\xi_k$ depend in a H\"older manner on $x$, and doubtless under other generalizations. As we discuss below, however, the assumptions given already cover all of our desired applications.
\end{remark}

Note that we do not even assume that $u_k$ is an eigenfunction (nor do we assume any of the $u_k$ have a sign). It is elementary to check that for any element $(\W, \{u_k\})$ in $\cQ(N,Q)$, the dilates $(\W/r\cap B_1, \{u_k(r\cdot)/r\})\ss \cQ(N,Q)$. By itself, this set of hypotheses will not be particularly useful, however; we will usually wish to be in a \emph{flat configuration}:

\begin{definition} Let $(\W,\{u_k\})\ss \cQ(N,Q,\{\xi_k\})$.   Let the \emph{flatness} $f$ be given by 
\[
 f = \inf\{ \sup_{B_1\cap \W,k}| u_k + \a_k (x\cdot\nu)|:\nu\in S^{n-1}, \sum_{i=1}^k \xi_k \a_k^2 -\xi_0 (0)\}.
\]
\end{definition}

Provided $f<c(Q)$, $\nu/2 \in \W$, and $\nu/2\notin \W$, we have that for the optimal $\a_k$,
\[
 c Q^{-1}\leq \sum_{k=1}^N\a_k^2 \leq CQ.
\]
Such an element $(\W,\{u_k\})$ will be described as \emph{flat}.

Let $\W$ be a minimizer which satisfies property E. Then take any $x\in \p \W$ and $r$ small enough; we claim that $B_1\cap (\W-x)/r$ and the rescaled eigenfunctions are in $\cQ(N,Q,\{\xi_k\})$ with $Q$ universal. Indeed, the first three properties were established in Section \ref{sec:general}, while the last two are easy consequences of Lemma \ref{lem:blowupreg}. Moreover, if  $x\in \p^*\W$ we also have that the flatness goes to $0$ with $r$, by Lemma \ref{lem:goodcfg}, and so the configuration is flat.

Now, take any functional $F(\W) = G(\l_1(\W),\ldots,\l_N(\W)) + |\W|$  satisfying our original assumptions, and let $V$ be a minimizer of $F$. Set
\begin{align*}
 F_p(\W) &= G(\l_1, (\sum_{i=1}^2 \l_i^p)^{1/p},\ldots, (\sum_{i=1}^N \l_i^p)^{1/p}) \\
 &\qquad + \int_\W (1 + s \max\{d(x,V),1\}) +\int_{\W^c} s\max\{d(x,V^c),1\}.
\end{align*}

\begin{lemma} For $s>0$ small and fixed and each $p$ large enough, $F_p$ admits (at least one) bounded, open, connected minimizer $V_p$. Moreover, $V_p\rightarrow V$ in $L^1$ and the Hausdorff sense. If (for $k\leq N$) $v_k^p$ is an eigenfunction for $V_p$ with eigenvalue $\l_k(V_p)$, then $v_k^p \rightarrow v_k$ uniformly, with $v_k$ an eigenfunction for $V$ with eigenvalue $\l_k(V)$.
\end{lemma}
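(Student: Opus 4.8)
The plan is to run the direct method for $F_p$ with all the estimates of Section \ref{sec:general} made uniform in $p$, then to let $p\to\8$ and identify the limit, exploiting that the volume weights in $F_p$ are tailored to single out $V$.

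\emph{The problem for a fixed $p$.} Write $F_p(\W)=G_p(\l_1(\W),\ldots,\l_N(\W))+\int_\W f+\int_{\W^c}g$, where $G_p$ is $G$ precomposed with the maps $\Phi_j:\l\mapsto(\sum_{i\le j}\l_i^p)^{1/p}$ ($j=1,\ldots,N$) and $f,g$ are the Lipschitz weights in the statement. This is a functional of exactly the shape treated in Section \ref{sec:general}: $G_p$ is $C^1$ and increasing, $|\n G_p|$ is bounded, and --- the key point --- $\p_{\l_k}G_p$ is bounded \emph{below} by a positive constant independent of $p$. This is where the large exponent is used: on the universally bounded range of eigenvalue tuples that can occur, $\l_j$ is the largest argument of $\Phi_j$, so $\p_{\l_j}\Phi_j\ge j^{-(p-1)/p}\ge 1/N$ there, while all other partials $\p_{\l_k}\Phi_j$ lie in $[0,1]$; hence $\p_{\l_k}G_p\ge\p_{\l_k}\Phi_k\,\p_{\xi_k}G\ge\l/N$. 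The weights $f,g$ are Lipschitz, $f\ge1$, and $\|g\|_\8$ is as small as we wish by shrinking $s$, so the small-$\eta$ hypotheses of Section \ref{sec:general} hold with room to spare. Thus the existence theory of \cite{DB,Bucur,MP} produces a bounded open minimizer $V_p$; its first $N$ eigenfunctions are Lipschitz with a $p$-independent constant by \cite{BMPV}; $V_p$ is connected with $A_{V_p}$ uniformly bounded by Lemma \ref{lem:con2}; and $\diam V_p$ is uniformly bounded by Corollary \ref{cor:diam}. A translation (permitted since $g$ is supported in a fixed ball, and anyway the $f$-term keeps $V_p$ near $V$) places all the $V_p$ in one ball $B_R$.

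\emph{Passing to the limit.} Along a subsequence $\l_k(V_p)\to\mu_k$ for $k\le N$ and, by Arzel\`a--Ascoli applied to the uniformly Lipschitz $v_k^p$, $v_k^p\to v_k$ uniformly. Setting $\W_\8:=\bigcup_k\{v_k\ne0\}$, the argument of Lemma \ref{lem:connected} --- the positivity sets of nondegenerate Lipschitz functions have boundaries of measure zero --- gives $V_p\to\W_\8$ in $L^1$ and in the Hausdorff sense, with $-\triangle v_k=\mu_k v_k$ on $\W_\8$ and $v_k\in H^1_0(V)$ in the limit (continuity plus vanishing of $v_k$ on $V^c$ force $\W_\8\ss V$, hence $\l_k(\W_\8)\le\mu_k$ and $\l_k(\W_\8)\ge\l_k(V)$). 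Since $\Phi_j(\l)\downarrow\l_j$ uniformly on bounded tuples while $G_p\ge G$, one gets $G_p\to G$ locally uniformly, so $F_p(\W)\to F_*(\W):=F(\W)+s\int_\W\max\{d(\cdot,V),1\}+\int_{\W^c}g$ for each fixed $\W$, and $\liminf_pF_p(V_p)\ge F_*(\W_\8)$ by lower semicontinuity. Comparing with $V$ as a competitor, $F_*(\W_\8)\le\limsup_pF_p(V_p)\le\lim_pF_p(V)=F_*(V)$, so $\W_\8$ minimizes $F_*$; but the weights are chosen precisely so that $V$ is the \emph{unique} minimizer of $F_*$ (this is the purpose of the truncations $\max\{\cdot,1\}$): $F(\W)\ge F(V)$ always, and the remaining weighted terms in $F_*(\W)-F_*(V)$ are strictly positive once $|\W\triangle V|>0$, the $f$-term charging the mass of $\W$ lying outside $V$ and the $g$-term charging the mass of $V$ not contained in $\W$. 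Hence $|\W_\8\triangle V|=0$, so $\int_{V_p}f\to\int_Vf$ and $\int_{V_p^c}g\to\int_{V^c}g$; feeding this and $G_p(\l(V))\to G(\l(V))$ into $F_p(V_p)\le F_p(V)$ yields $\limsup_pG(\l(V_p))\le\limsup_pG_p(\l(V_p))\le G(\l(V))$, so $G(\mu)\le G(\l(V))$, which with $\mu_k\ge\l_k(V)$ and the strict monotonicity of $G$ forces $\mu_k=\l_k(V)$ for $k\le N$. Thus each $v_k$ is an eigenfunction of $V$ with eigenvalue $\l_k(V)$; as every subsequence has a further subsequence along which all of this holds, the full sequence converges.

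\emph{Main difficulty.} The two delicate points are (i) the \emph{uniform-in-$p$} lower bound $\p_{\l_k}G_p\ge c\l$, which hinges on $\l_j$ being the dominant argument of $\Phi_j$ so that $\p_{\l_j}\Phi_j$ does not degenerate as $p\to\8$; and (ii) the \emph{selection} step, i.e.\ checking that the particular weights $f$ and $g$ make $V$ itself --- rather than some other, possibly distinct, minimizer of $F$ --- the unique minimizer of the limit functional $F_*$, which is what upgrades subsequential compactness to convergence of the whole sequence.
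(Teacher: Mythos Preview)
Your proof is correct and follows essentially the same route as the paper's: check that $F_p$ satisfies the hypotheses of Section~\ref{sec:general} uniformly in $p$ via the explicit lower bound on $\p_{\l_k}G_p$, obtain a minimizer $V_p$, pass to a subsequential limit $\W_\8$, and use the tailored volume weights to force $\W_\8=V$ by uniqueness of the minimizer of the limiting functional.

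Two minor remarks. First, the paper is more explicit about existence: since \cite{DB} only furnishes minimizers in a prescribed ball $B_S$, one must show these can be taken to lie in a fixed $B_{R_1}$ independent of $S$ (using the density estimate and translation-invariance outside $B_{R_0}$) before sending $S\to\8$; you compress this into a citation and a parenthetical. Second, your parenthetical ``$v_k\in H^1_0(V)$ \ldots\ $\W_\8\ss V$'' is misplaced: at that stage you only know $v_k\in H^1_0(\W_\8)$ and hence $\l_k(\W_\8)\le\mu_k$, which is all that is needed for the lower-semicontinuity step $\liminf_p F_p(V_p)\ge F_*(\W_\8)$. The containment $\W_\8\ss V$ (in fact equality) and the inequality $\mu_k\ge\l_k(V)$ that you invoke at the end follow only \emph{after} the uniqueness argument identifies $\W_\8$ with $V$. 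Neither point affects the validity of the argument.
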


\begin{proof}
 Let 
 \[
F_\8(\W) = F(\W) +  \int_\W  s \max\{d(x,V),1\} +\int_{\W^c} s\max\{d(x,V^c),1\},  
 \]
and note that $V$ is the unique minimizer of $F_\8$ over open sets (it is a minimizer of the first term and a unique minimizer of the second). Select an $R$ large enough that $V\ss B_R$. Choose $R_0 = R+1$,  and observe that  $F_p$ satisfy our standard assumptions uniformly in $p$ (with $\eta=s$): indeed, the partial derivative with respect to $\l_k$ is given by 
 \[
 \sum_{j=k}^N \p_{\l_j} G (\l_1, (\sum_{i=1}^2 \l_i^p)^{1/p},\ldots, (\sum_{i=1}^N \l_i^p)^{1/p})(\sum_{i=1}^j \l_i^p)^{1/p-1} \l_k^{p-1}.
 \]
Each of the terms is bounded uniformly in $p$ and nonnegative, while the $j=k$ term is uniformly bounded from below. Fix $s<\eta_0$, so that Lemma \ref{lem:con2} applies. 

By the results of \cite{DB}, we may find a set $V_p^S$ which minimizes $F_p$ over all quasiopen sets contained in $B_{S}$, with any $S$. We wish to show that there is a sequence of such minimizers $V_p^S$ which remain in some fixed ball $B_{R_1}$ uniformly in $S$. If that were the case, we could pass to the limit in the $\g$ sense of \cite{DB}, and obtain a minimizer of $F_p$ over all quasiopen sets.

Note that if $\bar{u}^S= \sum_{k=1}^N |u_k^S|$, where $u_k^S$ are the eigenfunctions of $V_p^S$, then $|u^S|\leq C=C(\l_N(V_p^S))$ uniformly bounded and $\D \bar{u}^S \geq - \bar{u}^S 2\l_N(V_p^S)\geq -C$.  We fix $r<r_0$ as in Lemma \ref{lem:lowerbd}, and take any point $x$ with $|B_{r/8}(x)\cap V_p^S|>0$; then $\sup_{B_{r}(x)} |u_k^S|\geq c_0 r$, so there is some $y\in B_{r}(x)$ with $\bar{u}^S(y)\geq c_0 r$.  Using the mean value property and the bound on $\bar{u}^S$ gives
\[
c_0 r \leq \bar{u}^S(y) \leq \fint_{B_{r/2}(y)} \bar{u}^S  - C r^2 \leq C \frac{|V_p^S \cap B_{r/2(y)}|}{r^n} - Cr^2.
\]
Then if $r$ is small enough, the second term is reabsorbed, and we have
\[
|V_p^S \cap B_{r/2}(y)|\geq c r^{n+1}\geq c_1.
\]
It follows that there can only be $|V_p^S|/c_1$ (a number bounded uniformly in $S$) disjoint balls of the type described; this means that up to a set of $0$ Lebesgue measure, $V_p^S$ is contained in a uniformly bounded number $K$ of balls $\{B_T(x_i)\}_{i=1}^K$, with fixed $T$. Removing this Lebesgue-negligible portion of $V_p^S$ has no effect on the eigenvalues, so we may assume $V_p^S$ is truly contained in these balls. Finally, consider the connected components of $\cup_i B_T(x_i)$: if any of them lies outside of, say, $B_{R_0 + 2TK}$, we may translate it so that it at least intersects this ball while preserving the minimality (here we use that $F_p$ is translation-invariant outside of $B_{R_0}$).  This establishes the claim.

At this point, we have produced a minimizer $V_p$ for the functional $F_p$. From the results of Section \ref{sec:general}, it follows that $V_p$ is open, bounded, and connected, and that any basis of eigenfunctions $u_k^p$ satisfies $|\n u_k^p|\leq C$ as well as the nondegeneracy property of Lemma \ref{lem:lowerbd}. From this, along a subsequence we have $\l_k(V_p) \rightarrow \l_k$ for some numbers $\l_k$,  $u_k^p \rightarrow u_k$ uniformly for some Lipschitz functions $u_k$, and $V_p \rightarrow V_\8$ in $L^1$ and the Hausdorff sense. Here $u_k$ are supported on $V_\8$, and satisfy $-\triangle u_k = \l_k u_k$ on that set.

For any fixed open set $E$ we have $F_p(E)\rightarrow F_\8(E)$ as $p\rightarrow \8$, so this gives
\begin{align*}
 \inf_E F_\8 &\geq \lim_p \inf_E F_p(E)\\
 & = \lim_p F_p(V_p) \\
 &= F(\l_1,\ldots,\l_N) + \int_{V_\8}  s \max\{d(x,V),1\} +\int_{V_\8^c} s\max\{d(x,V^c),1\}.
\end{align*}
On the other hand, $\l_k \leq \l_k(V_\8)$ (as the $\l_k$ are eigenvalues, coming with corresponding eigenfunctions), so
\[
\inf_E F_\8 \leq F_\8(V_\8) \leq F(\l_1,\ldots,\l_N) + \int_{V_\8}  s \max\{d(x,V),1\} +\int_{V_\8^c} s\max\{d(x,V^c),1\}.
\] 
From this it follows that $V_\8$ is a minimizer of $F_\8$, and $\l_k = \l_k(V_\8)$. By the uniqueness of minimizers to $F_\8$, this means $V_\8 = V$. Finally, this argument may be applied to any subsequence, which gives the convergence claimed.
\end{proof}

\begin{lemma}\label{lem:Eforall} The following hold for the $V_p$ from above:
\begin{enumerate}
 \item $V_p$ enjoys property S.
 \item $V_p$ enjoys property E, with coefficients $\xi_{k,p}=\p_{\l_k}F_p(V_p)$ uniformly bounded in $p$.
 \item $V$ satisfies property E, with $\xi_0 =1$ and
 \[
 \xi_k = \lim_p \xi_{k,p}
 \]
 along a subsequence of $\{p\}$.
\end{enumerate}
\end{lemma}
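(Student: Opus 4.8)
The plan is to obtain (1) from the criterion isolated in Remark \ref{rem:nonS2}, to read off (2) from Lemma \ref{lem:StoE} combined with the explicit formula for $\p_{\l_k}F_p$, and to prove (3) by pushing property E from the $V_p$ to $V$ through the domain-variation characterization of Lemma \ref{lem:domvar}.

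For (1) and (2): by Remark \ref{rem:nonS2} it suffices to check that $F_p$ satisfies $\p_{\l_k}F_p>\p_{\l_{k+1}}F_p$ whenever $\l_k(V_p)=\l_{k+1}(V_p)$. Writing $S_j=(\sum_{i=1}^j\l_i^p)^{1/p}$, the formula from the previous lemma reads $\p_{\l_k}F_p=\sum_{j=k}^N\p_{\l_j}G(\l_1,S_2,\ldots,S_N)\,S_j^{1-p}\l_k^{p-1}$, so
\[
\p_{\l_k}F_p-\p_{\l_{k+1}}F_p=\p_{\l_k}G\cdot S_k^{1-p}\l_k^{p-1}+\sum_{j=k+1}^N\p_{\l_j}G\cdot S_j^{1-p}(\l_k^{p-1}-\l_{k+1}^{p-1}).
\]
When $\l_k=\l_{k+1}$ the sum vanishes and the first term is strictly positive since $\p_{\l_k}G\geq\l>0$; hence $V_p$ has property S, and Lemma \ref{lem:StoE} gives property E for $V_p$ with $\xi_{k,p}=\p_{\l_k}F_p(V_p)>0$. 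For the uniform bounds, write the $j$-th summand of $\p_{\l_k}F_p$ as $\p_{\l_j}G\cdot(\l_k/S_j)^{p-1}$; since $\l_1\leq\cdots\leq\l_N$ we have $\l_k\leq\l_j\leq S_j$ for $j\geq k$, so that factor lies in $(0,1]$ and $\xi_{k,p}\leq N\|\n G\|_\infty\leq N\l^{-1}$, while the $j=k$ term alone gives $\xi_{k,p}\geq\l(\l_k/S_k)^{p-1}\geq\l\,k^{-(p-1)/p}\geq\l/N$, using $S_k=\l_k(\sum_{i\leq k}(\l_i/\l_k)^p)^{1/p}\leq\l_k\,k^{1/p}$. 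Thus $c\leq\xi_{k,p}\leq C$ uniformly in $p$.

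For (3): pass to a subsequence along which $\xi_{k,p}\to\xi_k\in[c,C]$. First note that the volume-perturbation density $f_p-g_p$ equals $1$ on the fixed neighborhood $\{d(\cdot,\p V)<1\}$ of $\p V$, because there both $d(\cdot,V)$ and $d(\cdot,V^c)$ are at most $1$ and the two $s$-terms cancel; since $V_p\to V$ in the Hausdorff sense, $\p V_p$ lies in this neighborhood for $p$ large, so property E for $V_p$ reads $\sum_k\xi_{k,p}(u_k^p)_\nu^2=1$ near $\p V$. Next I would upgrade the uniform convergence $u_k^p\to u_k$ to strong convergence in $H^1$: the gradients are bounded in $L^2$ and converge weakly to $\n u_k$, while $\|\n u_k^p\|_{L^2}^2=\l_k(V_p)\to\l_k(V)=\|\n u_k\|_{L^2}^2$ (the last equality because $u_k$ is a normalized eigenfunction of $V$ for $\l_k(V)$), so the norms converge and the convergence is strong. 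Now fix $x\in\p V$ and a vector field $\U$ with $\|\U\|_{C^1}\leq 1$ supported on a small ball $B_r(x)$, and pick $x_p\in\p V_p$ with $x_p\to x$; since property E makes the surface term in Lemma \ref{lem:domvar} vanish, the inequality \eqref{eq:Edomvar} holds for $V_p$ on $B_{2r}(x_p)\supseteq\supp\U$ with a universal constant. Every integral there passes to the limit: the Dirichlet integrals and their $\phi_t$-transforms (rewritten via the area formula as in the proof of Lemma \ref{lem:domvar}) by the strong $H^1$ convergence together with the $L^1$ convergence of $V_p\to V$, the volume terms by $L^1$ convergence, and the $f_p-g_p$ contributions because $f_p-g_p\equiv 1$ on $B_{2r}(x_p)$. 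Hence \eqref{eq:Edomvar} holds for $V$ with coefficients $\xi_k$ and $\xi_0=1$, and the converse implication of Lemma \ref{lem:domvar} yields property E for $V$.

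The main obstacle is precisely this passage to the limit in (3). The free boundary condition $\sum_k\xi_k(u_k)_\nu^2=\xi_0$ is quadratic in the normal derivatives $(u_k)_\nu$, which exist only as Radon--Nikodym densities of the measures $-\triangle u_k-\l_k u_k$ and converge at best weakly-$*$, so one cannot square through a weak limit of measures. The way around this is to work instead with the equivalent energy/domain-variation formulation of Lemma \ref{lem:domvar}, which is continuous under strong $H^1$ convergence; the real content is therefore establishing that strong $H^1$ convergence, and this is exactly where the eigenvalue identity $\|\n u_k^p\|_{L^2}^2=\l_k(V_p)\to\l_k(V)$ is used. A second point that must be handled --- and is, in (2) --- is that the $\xi_{k,p}$ stay bounded away from $0$ and $\infty$ uniformly in $p$, so that the limiting coefficients are admissible; this uses essentially that every $\p_{\l_j}G$ is both bounded and bounded below.
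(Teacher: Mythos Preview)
Your proposal is correct and follows essentially the same strategy as the paper: (1) via the criterion of Remark \ref{rem:nonS2}, (2) via Lemma \ref{lem:StoE} together with the explicit formula for $\p_{\l_k}F_p$, and (3) by passing the domain-variation inequality \eqref{eq:Edomvar} of Lemma \ref{lem:domvar} to the limit. The only difference is in the justification of the limit in (3): the paper invokes dominated convergence (using the uniform Lipschitz bound together with pointwise a.e.\ convergence of $\n u_k^p$ from interior elliptic regularity on $V$ and vanishing on $V^c$), while you argue via strong $H^1$ convergence from the identity $\|\n u_k^p\|_{L^2}^2=\l_k(V_p)\to\l_k(V)=\|\n u_k\|_{L^2}^2$; both are valid and essentially interchangeable here. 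Your observation that $f_p-g_p\equiv 1$ near $\p V$ and your explicit two-sided bounds on $\xi_{k,p}$ are useful clarifications that the paper leaves implicit (the bounds were in fact already noted in the preceding lemma).
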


To clarify the third conclusion, property E will hold with values $\{\xi_k\}$ which are any limit point of the collection $\{\xi_{k,p}\}_p$; we do not show that there is a unique limit point.

\begin{proof}
 To see that $V_p$ has property S, let us compute the $\l_k$ partial derivative of $F_p$. This is given by:
 \[
 \sum_{j=k}^N \p_{\l_j} G (\l_1, (\sum_{i=1}^2 \l_i^p)^{1/p},\ldots, (\sum_{i=1}^N \l_i^p)^{1/p})(\sum_{i=1}^j \l_i^p)^{1/p-1} \l_k^{p-1}.
 \]
Assume that for some $k$, $\l_k=\l_{k+1}$. Then we clearly have
\[
\p_{\l_k} F_p > \p_{\l_{k+1}}F_p,
\]
for the two expressions share $N-k$ identical terms, but then $\p_{\l_k} F_p$ has one additional strictly positive term. Applying Remark \ref{rem:nonS2}, we have that $V_p$ satisfies property S. By Lemma \ref{lem:StoE}, we also have (2).

To show (3), apply the criterion \eqref{eq:Edomvar} from Lemma \ref{lem:domvar}; this is satisfied uniformly in $p$ by each $V_p$, and the inequality passes to the limit using the dominated convergence theorem.
\end{proof}

\section{Harnack Inequality}\label{sec:harnack}

We now prove an estimate near flat points of the free boundary. Estimates of this type were introduced by \cite{DeSilva} for the Alt-Caffarelli one-phase problem, and have since been used in several related works \cite{DFS1,DFS2,DFS3,DFS4}.

\begin{lemma}\label{lem:harnack} There is a constant $\e_H=\e_H(N.Q)>0$ such that for any $\e<\e_H$, flat $(\W,\{u_k\})\in \cQ(N,Q)$ and nonnegative numbers $\{\a_k\}_{k=1}^N$ and $a\leq b$ satisfying
\begin{enumerate}
 \item $\sum_{i=1}^N \a_k^2 \xi_k = \xi_0$.
 \item Either (A) $\a_k [-x_n + a] <u_k< \a_k [-x_n + b]$ and $|\triangle u_k|\leq \e^2 \a_k$  in $\W$ or (B) $|u_k|\leq \e^2$ and $|\triangle u_k|\leq \e^4$ in $\W$ for each $k$.  
 \item $b-a\leq \e$.
 \item $\osc_{B_1} \xi_0 \leq \e^2$.
\end{enumerate}
we have that for some $a',b'$ with $a\leq a'\leq b'\leq b$, 
\[
\a_k [-x_n+a'] <u_k< \a_k [-x_n + b']
\]
on $B_{1/10}\cap \W$ for every $k$ for which alternative $(A)$ held, and moreover $b'-a'<(1-\g)\e$ with $\g\in (0,1)$ universal.
\end{lemma}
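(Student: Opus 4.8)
\section*{Proof proposal}

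\textbf{Strategy.} The plan is to adapt De Silva's ``improvement of flatness via Harnack'' scheme to the vectorial, sign-changing setting, using the free boundary relation in hypothesis (1) to couple the eigenfunctions. Throughout I take $\nu=e_n$ (so $\W\approx\{x_n<0\}$ and $u_k\approx\a_k(-x_n)^+$ near $0$), and I dispose first of the trivial case $b-a\le \e/2$: then $b'-a'\le b-a\le\e/2<(1-\g)\e$ already holds with $a'=a$, $b'=b$ provided $\g<1/2$, so from now on $b-a>\e/2$, in particular $b-a\simeq\e$. I would begin by recording the geometry: from nondegeneracy (2) and flatness one checks in the usual way (cf.\ Section \ref{sec:flat}) that $\p\W\cap B_{1/2}\subseteq\{|x_n|\le C\e\}$, that in fact $\p\W$ lies in $\{a\le x_n\le b\}$ (since $u_k=0=\a_k(-x_n+a)\wedge\a_k(-x_n+b)$ must be consistent with (2)(A) in the limit to $\p\W$), and that the point $\bar{x}:=-\tfrac15 e_n$ satisfies $\bar{x}\in\W$, $B_{1/20}(\bar{x})\subseteq\W$. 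Reordering, let $\a_1=\max_k\a_k$, so that $\a_1^2\ge c(Q)$ by (1) and $|\xi_k|\le Q$; for $\e$ small this index must be in alternative (A), since (B) would give $|u_1|\le\e^2$ on $\W$, contradicting $u_1(\bar{x})>\a_1(\tfrac1{10})$.

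\textbf{Dichotomy and interior Harnack.} Either $u_1(\bar{x})\ge\a_1\big(\tfrac15+\tfrac{a+b}{2}\big)$ (``the free boundary sits low'') or $u_1(\bar{x})<\a_1\big(\tfrac15+\tfrac{a+b}{2}\big)$ (``it sits high''); the two cases are mirror images, so I treat the first. The function $w:=u_1-\a_1(-x_n+a)$ is nonnegative on $\W$, satisfies $|\triangle w|=|\triangle u_1|\le\e^2\a_1$ on $B_{1/20}(\bar{x})\subseteq\W$, and $w(\bar{x})\ge\a_1\tfrac{b-a}{2}$. The Harnack inequality with right-hand side then gives $w\ge c_1\a_1\tfrac{b-a}{2}-C\e^2\a_1\ge c_2\a_1(b-a)$ on $B_{1/40}(\bar{x})$, where $b-a>\e/2$ was used to absorb the error term. (For the indices in alternative (B) the analogous error $\e^4$ is even more negligible; these indices need no improvement and are simply carried along.)

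\textbf{Propagation to the free boundary.} Now I would push this interior gain down to $\p\W$ through a specialized barrier family, exactly in the spirit of Lemma \ref{lem:goodcfg}. One builds functions $\phi_t$ of the form $(1-O(\e))(-x_n+a+t)^- + O(\e)$-corrections, with: $\phi_0$ lying strictly below $u_k/\a_k$ for every $k$ in (A) (by (2)(A)); $\phi_t<u_k/\a_k$ on $\p B_{1/10}$; $\phi_t$ strictly superharmonic on $\{x_n<a+\g\e\}$; and $\phi_t$ ``capped'' below $\tfrac15+a+c_2(b-a)$ near $\bar{x}$, so that the Harnack gain keeps $\phi_t<u_1/\a_1$ there. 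Set $\bar{t}=\sup\{t:\ u_k>\a_k\phi_t\text{ on }\W\cap B_{1/10}\ \forall k\in(A)\}$. If $\bar{t}$ is small, there is a touching point $x_*$; it cannot be interior to $\W$ (superharmonicity of $\phi_{\bar t}$ against $|\triangle u_k|\le\e^2\a_k$), nor on $\p B_{1/10}$ (strict inequality), nor at $\bar{x}$ (the cap), so $x_*\in\p\W\cap B_{1/10}$. There the smooth surface $\{\phi_{\bar t}=0\}$ supplies a ball inside $\W$ tangent to $\p\W$ at $x_*$, and blowing up as in Lemma \ref{lem:blowupreg} produces half-plane limits $\tilde v_k=\b_k(x\cdot\nu_*)^-$ with $\b_k\ge\a_k(1-O(\e))$ for $k\in(A)$; condition (4) of $\cQ$ together with $\osc\xi_0\le\e^2$ and hypothesis (1) then yields $\sum_{k\in(A)}\xi_k\a_k^2(1-O(\e))\le\sum_k\xi_k\a_k^2+\e^2$, which -- combined with a quantitative version of the same inequality that also accounts for the ``room'' created by the interior gain -- forces a contradiction unless $\bar{t}\ge\g(b-a)$ for a universal $\g\in(0,1)$. (In the mirror case one uses condition (5) instead, touching from $\W^c$.) Reading off $u_k>\a_k(-x_n+a+\g(b-a))$ on $\W\cap B_{1/10}$ from $u_k>\a_k\phi_{\bar t}$, and keeping the old upper bound, gives the claim with $a'=a+\g(b-a)$, $b'=b$, so $b'-a'\le(1-\g)(b-a)<(1-\g)\e$; localizing to $B_{1/10}$ absorbs the $O(\e)$ corrections.

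\textbf{Main obstacle.} The crux is the vectorial coupling: the dichotomy is decided by inspecting a \emph{single} eigenfunction $u_1$, yet the conclusion demands a \emph{common} pair $(a',b')$ valid for every alternative-(A) eigenfunction. This is precisely why the barrier argument must be run simultaneously for all $u_k$ and closed by a blow-up at the touching point, where the free boundary relation $\sum_k\xi_k\b_k^2=\xi_0$ is available -- it is hypothesis (1), not any individual control on a particular $u_k$, that prevents $\bar t$ from being small, and hence transfers the one detected improvement to all of them. The secondary technical difficulty is calibrating the family $\{\phi_t\}$ (the $O(\e)$ corrections, as in Lemma \ref{lem:goodcfg}) so that it is at once a legitimate comparison function for every $u_k$ (same geometry, different slopes $\a_k$), superharmonic where needed, capped near $\bar{x}$, and correctly vanishing along the relocated free boundary; this is the ``specialized family of barriers'' alluded to in the introduction, and is where I expect most of the real work to lie.
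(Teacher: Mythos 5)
Your overall architecture matches the paper's proof almost exactly: dispose of the case $b-a\le\e/2$, single out one index with $\a_1^2\xi_1\ge Q^{-1}$ (necessarily in alternative (A)), run the dichotomy at $z=(0,-1/5)$, apply the interior Harnack inequality to $u_1-\a_1(a-x_n)$, and then slide a family of barriers simultaneously under all the alternative-(A) functions until a touching point $y\in\p\W$ appears, where condition (4) of the definition of $\cQ$ (resp.\ (5) in the mirror case) is invoked. Your identification of the main difficulty -- that a gain detected in one component must be transferred to all of them through the scalar relation $\sum_k\xi_k\b_k^2\le\xi_0$ -- is also the right one.

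There is, however, a genuine gap at the decisive quantitative step, which you leave as a black box (``a quantitative version of the same inequality that also accounts for the room created by the interior gain''). Your ``cap'' on $\phi_t$ near $\bar x$ only prevents the touching point from occurring at $\bar x$; it does not feed the Harnack gain into the free boundary inequality. If at the contact point $y$ you can only assert $\b_k\ge\a_k(1-C(\bar t+\e^2))$ for $k\in(A)$, then condition (4) gives $\sum_{k\in(A)}\xi_k\a_k^2(1-C(\bar t+\e^2))^2\le\xi_0(y)\le\xi_0(0)+\e^2$, which is satisfied already at $\bar t=0$ and yields no contradiction, hence no lower bound on $\bar t$. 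The paper's device is an \emph{additional} comparison function $\z_s$, harmonic on $Q_s\cap\{\phi_s>0\}$, equal to the Harnack gain $c_0\e\,q(|x'|)$ on the bottom face $\{x_n=-1/5\}$ and zero on the rest of the boundary of $\{\phi_s>0\}$, extended by zero so as to be subharmonic; by the Hopf lemma $(\z_s)_\nu\le-c_2\e$ along $\{\phi_s=0\}$ away from the lateral edge. The sliding is then performed with $u_1\ge\a_1(\phi_s+\z_s)$ for the distinguished index and $u_k\ge\a_k\phi_s$ for the others, so that at the contact point one gets the \emph{strictly improved} slope $\b_1\ge\a_1(1-C_1(\bar s+\e^2)+c_2\e)$. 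Substituting into $\sum_k\xi_k\b_k^2\le\xi_0(y)$ and using hypotheses (1) and (4) produces the extra first-order term $2\a_1^2\xi_1c_2\e$ on the left, whence $c_2\e\le C(\bar s+\e^2)$ and $\bar s\ge c_3\e$. Without this corrector (or an equivalent mechanism converting the interior gain into a strict improvement of the normal derivative at the touching point), the argument does not close, so this step needs to be supplied explicitly.
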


\begin{proof} 
We categorize the indices $k$ into either $\cA$ (for which alternative (A) holds) and $\cB$ (for which (B) holds). The set $\cA$ is nonempty, for otherwise we contradict the nondegeneracy property; indeed, $\a_k^2 \xi_k \geq Q^{-1}$ for at least one $k$ (say without loss of generality  for $k=1$). Note also that as $0\in \p \W$, we must have $a<0<b$, and in particular $|a|,|b|\leq \e$. Set $\e' = b-a$; we may as well assume that $\e'\geq \e/2$, for otherwise the conclusion holds automatically with $\g=1/2$. We have that for $k\in \cA$, $u_k \geq -\a_k \e'$.

We have that for every $k\in \cA$,
\[
 \a_k [-x_n+a] \leq u_k \leq \a_k [-x_n+a+\e']
\]
in $\W$. Set $z = (0,-1/5)$ (we use the notation $x=(x',x_n)$); let us first consider the case of
\[
 \a_1 (a+\e'/2-z_n) \leq u_1 (z).
\]
The opposite case can then be treated analogously. From the Harnack inequality applied to
\[
 \frac{u_1 - \a_k (a-x_n)}{\a_k \e'}, 
\]
we have that $u_1 > \a_k (a+c_0 \e + 1/5)$ on $\{(x,x'): x_n=-1/5, |x'|\leq 9/10\}$ for some universal $c_0$.  

We will require some barrier functions, which we construct now. First, let $q:[0,1]\rightarrow [0,1]$ be a smooth nonincreasing function with $q=1$ on $[0,1/2]$ and $q=0$ on $[3/4.1]$. Let $q_1 : [0,1]\rightarrow [-1.0]$ be a smooth nondecreasing function which is $-1$ on $[0,1/10]$ and $0$ on $[3/20,1]$. Let $Q_s$ be the region
\[
 Q_s = \{(x',x_n): |x'|\leq \frac{9}{10}, -1/5<x_n< b + [q(|x'|)+\k q_1(9/10-|x'|)] s\}.
\]
for a $\k$ universal to be chosen shortly. We write $\p^f Q_s$ for the part of $\p Q_s$ where $x_n =[q(|x'|)+\k q_1(9/10-|x'|)] s$.  Note that $Q_s\ss B_1$ provided $s<\e$. Now set $\phi_s$ to be the solution to the Dirichlet problem
\[
 \begin{cases}
  \triangle \phi_s = \e^2 & x\in Q_s\\
  \phi_s = -\e'  & x\in \p^f Q_s\\
  \phi_s = - x_n + a + s \k q_1(-x_n) \in \p Q_s\sm \p^f Q_s.
 \end{cases}
\]
Consider the function $v_s = \phi_s - a + x_n$. We claim that for $\k$ suitably small, we have that $v_s \geq c_1 s -C_0 \e^2$ on $B_{1/10}\cap Q_s$, for all $s$. Indeed,set $v_s^\pm$ to be the solutions to
\[
 \begin{cases}
  \triangle v^+_s = \e^2 & x\in Q_s\\
  v_s^+ =  (x_n - b)^+ & x\in \p Q_s
 \end{cases}
\]
and
\[
 \begin{cases}
  \triangle v^-_s = 0 & x\in Q_s\\
  v_s^- = -(x_n - b)^-  & x\in \p^f Q_s\\
  v_s^- = s \k q_1(-x_n) & x\in \p Q_s \sm \p^f Q_s.
 \end{cases}
\]
(The support of $v_s^+$ on $\p^f Q_s$ is near the center, where $|x'|\leq 1/2$, while the support of $v_s^-$ is near the sides, where $|x'|\geq 6/10$). Then $v_s = v_s^+ +v_s^-$. Now, for $v_s^+$,  we have $v_s^+ \geq c s -C\e^2$ on $B_{1/10}\cap Q_s$. Indeed, for a harmonic function with the same boundary data as $v_s^+$, we would have that it is nonnegative and at least $s$ on a piece of the boundary $\p^f Q_s\cap B_{1/2}$. It follows that this harmonic function is bounded from below by $c s$ on $Q_s \cap B_{1/10}$; meanwhile, the solution the the Poisson problem 
\[
\begin{cases}
\triangle \theta_s = \e^2 & x\in Q_s\\
\theta_s =  0 & x\in \p Q_s
\end{cases}
\]
is bounded by $C\e^2$. On the other hand, clearly $|v_s^-|\leq \k s$ by the maximum principle, and so selecting $\k$ small enough, our original claim is valid. 

Once we have chosen $\k$, we check that for any point $y$ with $|y'|=9/10$ and $y \in \p^f Q_s$, $v_s$ is negative on $B_\r(y)$ for $\r$ universal. Indeed, on $B_{1/10}(y)\cap \p Q_s$, we have that $v_s = - \k s$, while on $\p B_{1/10}(y) \cap Q_s$, $v_s \leq s$ by the maximum principle. Another application of the maximum principle will then reveal that $v_s$ is less than an affine function which takes the value $-\k s$ on $y$ and is at least $s$ on $\p B_{1/10}\cap Q_s$. We may find such an affine function whose negativity set contains $B_\r(y)$, uniformly in $s$.

Also, from standard estimates on $v_s$, we have that $|\n \phi_s + e_n|\leq C(s+\e^2)$, with the constant depending only on $q,q_1$ and hence universal. From this we see that $\phi_s$ is strictly decreasing in $x_n$, and $\{\phi_s=0\}$ can be expressed as a smooth graph $(x', \t_s (x'))$. From the above argument, we know that $\t\leq a$ for $|x'|>9/10-\r$. Moreover, it follows that the normal derivative $(\phi_s)_\nu$, $\nu = -\n u/|\n u|$ along this graph satisfies
\[
 |(\phi_s)_\nu + 1|\leq C_1(s + \e^2).
\]

Next, let $\z_s$ solve the Dirichlet problem
\[
 \begin{cases}
  \triangle \z_s = 0 & x\in Q_s\cap \{\phi_s>0\}\\
  \z_s = 0 & x\in \p (\{\phi_s>0\})\sm \{x_n=-1/5\}\\
  \z_s = c_0 \e q(|x'|) & x\in \p Q_s\cap \{x_n=-1/5\}.
 \end{cases}
\]
It is straightforward to check using the Hopf lemma that for all $s\in [0,\e)$, we have
\[
 (z_s)_\nu \leq - c_2 \e
\]
for points with $|x'|\leq 9/10 -\r$ on $\{\phi_s=0\}$. We extend $z_s$ by $0$ to $Q_s$; note that this makes it subharmonic on $Q_s$.

Armed with these barriers, we now establish our main claim. By definition, we have that $u_k > \a_k (-x_n + a)> \a_k \phi_0$ for each $k\in \cA$ on $Q_0$. Furthermore, from the maximum principle we have $u_1 > \a_1 (\phi_0 +\z_0)$. Let $\bar{s}$ be the smallest $s$ for which either $u_k = \a_k \phi_s$ for some $k$ or $u_1 = \a_1 (\phi_s +\z_s)$ on $Q_s$, and assume $\bar{s}<\e$ (if not, proceed to the next step). Let $y$ be a point at which one of the equalities above hold. Then $y\notin \p Q_{\bar{s}}$: on $\p Q_{\bar{s}} \sm \p^f Q_{\bar{s}}$ we have $\phi_{\bar{s}}\leq \phi_0$ and $\z_{\bar{s}}=\z_0$, and the inequality was strict at $s=0$. Meanwhile on $\p^f Q_{\bar{s}}$ $\phi_{\bar{s}} = -e' $, and $u_k > \a_k \e'$.

We may also dispense with the case of $y\in \W$, for $|\triangle u_k|< \e^2 \a_k$, while $\triangle \a_k \phi_{\bar{s}} = \e^2 \a_k$ and $\triangle z_{\bar{s}} \geq 0 $. The same argument applies if $y \notin \p \W$. As $y\in \p \W$, we have $u_k(y)=\phi_{\bar{s}}(y)=\z_{\bar{s}}(y)=0$, so there is equality in \emph{every one} of the inequalities. From the properties of the barrier, this implies that $|y'|\leq 1-\r$. Set $\nu = -\n \phi_{\bar{s}}/|\n \phi_{\bar{s}}|$. Now, for each $k\in \cA$, we have
\[
 u_k(z)\geq \a_k \phi_{\bar{s}}(z) \geq \a_k (y-z)\cdot \nu (1-C_1 (\bar{s}+\e^2)) + o(|z-y|) 
\]
on any non-tangential cone contained in $\{\phi_k>0\}$. For $k=1$, we have the stronger inequality
\[
 u_1(z)\geq \a_1 (\phi_{\bar{s}}(z) +\z_{\bar{s}}(z)) \geq \a_1 (y-z)\cdot \nu (1-C_1 (\bar{s}+\e^2) + c_2 \e) + o(|z-y|).
\]
Applying the definition of $\cQ$ (with $\b_k=0$ for $k\in \cB$), we learn that
\[
2\a_1^2 \xi_1 (c_2 \e)(1-C_1(\bar{s}+\e^2)) + \sum_{k\in \cA} \xi_k \a_k^2 (1-C_1 (\bar{s}+\e^2))^2 \leq \xi_0(y). 
\]
Reorganizing and using assumptions (1) and (4),
\[
CQ^{-1} (c_2 \e)(1-C_1(\bar{s}+\e^2)) +  [\xi_0(0)-C\e^4](1-C_1 (\bar{s}+\e^2))^2 \leq \xi_0(0) + \e^2.
\]
Reabsorbing terms higher order in $\e$,
\[
 c_2 \e \leq C (C_1+1) (\bar{s}+\e^2).
\]
We see that provided $\e_H$ is small enough, this gives $\bar{s}\geq c_3 \e$.

Recall that $\phi_s \geq - x_n + a + c_1 s - C_0\e^2$ on $B_{1/10}\cap Q_s$. It follows that $u_k \geq \a_k ( -x_n + a + c_4 \e)$ on this set for each $k\in \cA$. Now, $Q_s$ contains $\W\cap B_{1/10}\cap \{x_n<b\}\cap B_{1/10}$ so we have our conclusion with $a'=a +c_4 \e$ and $b'=b$.

Let us briefly explain how to proceed if the opposite inequality holds,
\[
 \a_1 (a+\e'/2-z_n) > u_1 (z).
\]
The argument is essentially the same, using analogous barrier constructions. Once a contact point $y$ is identified, a comparison argument can be used to show that $u_k(z)\leq C\e^2 d(y,z)$ on a non-tangential region near $y$ for $k\in \cB$, so this contributes negligibly to the free boundary condition. A similar argument excludes the possibility that $(u_k)_\nu$ is large and positive for some $k\in \cA$.
\end{proof}

The purpose of the previous lemma is the following estimate:

\begin{corollary}\label{cor:holderest} There are universal constants $C_1,\e_1,\s>0$ such that the following holds: if  $(\W,\{u_k\})\in \cQ(N,Q)$ is flat and there are nonnegative numbers $\{\a_k\}_{k=1}^N$ and $a\leq b$ satisfying for $\e<\e_1$
\begin{enumerate}
 \item $\sum_{i=1}^N \a_k^2 \xi_k = \xi_0$.
 \item Either (A) $\a_k [-x_n + a] <u_k< \a_k [-x_n +b]$  and $|\triangle u_k|\leq \e^2 \a_k$ in $\W$ or (B) $|u_k|\leq \e^2$ and $|\triangle u_k|\leq \e^4$ in $\W$ for each $k$.  
 \item $b-a\leq \e$.
 \item $\osc_{B_1} \xi_0 \leq \e^2$,
\end{enumerate}
then for any $x,y\in B_{1/2}\cap \bar{\W}$ with $|x-y|\geq \e/\e_1$ and $k$ such that (A) holds, we have
\[
 |v_k(x)-v_k(y)|\leq C_1 |x-y|^\s,
\]
where
\[
 v_k (z) = \frac{u_k(z) + \a_k z_n}{\a_k \e}.
\]
\end{corollary}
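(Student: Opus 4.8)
The plan is to derive this H\"older estimate from the Harnack-type improvement of Lemma \ref{lem:harnack} by the standard De Giorgi--Nash--Moser iteration, exactly as in the corollary to the Harnack inequality in \cite{DeSilva}: iterate Lemma \ref{lem:harnack} along the scales $10^{-j}$ to get geometric decay of the oscillation of $v_k$ at free-boundary points, and then transfer this to interior points of $\W$ using interior elliptic estimates for the nearly harmonic function $u_k+\a_k x_n$ (which has $\triangle(u_k+\a_k x_n)=\triangle u_k$ small).

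The core is an induction. Replacing $\g$ by $\min\{\g,1-10^{-1/2}\}$ --- which only weakens the conclusion of Lemma \ref{lem:harnack} and so is harmless --- we may assume $(10(1-\g))^2\ge 10$, so in particular $\s:=\log_{10}\big(1/(1-\g)\big)<1$. Fix $x_0\in\p\W\cap B_{1/2}$; translating $x_0$ to the origin and rescaling by a fixed harmless factor, we get a flat element of $\cQ(N,Q)$ for which hypotheses (1)--(4) still hold with $\e$ replaced by a comparable quantity (call it $\e$ again). Set $\e_j=(10(1-\g))^j\e$ and consider the dilates $(\W_j,\{u_k^{(j)}\})=(10^j\W\cap B_1,\{10^j u_k(10^{-j}\,\cdot)\})\in\cQ(N,Q)$. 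I claim that for every $j$ with $\e_j<\e_H$ there are $\alpha_j\le 0\le\beta_j$ with $\beta_j-\alpha_j\le(1-\g)^j\e$ such that on $B_{10^{-j}}\cap\W$,
\[
\a_k[-x_n+\alpha_j]<u_k<\a_k[-x_n+\beta_j]\quad(k\in\cA),\qquad |u_k|\le\e^2\quad(k\in\cB).
\]
The base case is hypotheses (2)--(3). For the step one applies Lemma \ref{lem:harnack} to $(\W_j,\{u_k^{(j)}\})$ with flatness parameter $\e_j$; the four hypotheses persist under the dilation: (1) since $\a_k,\xi_k,\xi_0(0)$ are unchanged; (3) since $10^j(\beta_j-\alpha_j)\le\e_j$; (4) since $\osc_{B_1}\xi_0(10^{-j}\,\cdot)=\osc_{B_{10^{-j}}}\xi_0\le\osc_{B_1}\xi_0\le\e^2\le\e_j^2$; and (2) since for $k\in\cA$ the dilated trapping is the inductive hypothesis and $|\triangle u_k^{(j)}|=10^{-j}|\triangle u_k|\le\e_j^2\a_k$, while for $k\in\cB$, $|u_k^{(j)}|\le 10^j\e^2\le\e_j^2$ and $|\triangle u_k^{(j)}|\le 10^{-j}\e^4\le\e_j^4$ --- this last is exactly why $\e_j$ must not decay, hence the growth factor $10(1-\g)\ge\sqrt{10}$. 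Rescaling back the conclusion of the lemma on $B_{1/10}\cap\W_j$ gives the claim at level $j+1$. As $\e_j$ grows geometrically, the induction runs for all $j$ up to some $J$ with $10^{-J}$ at most a fixed multiple of $\e/\e_H$; so, choosing $\e_1$ a small multiple of $\e_H$, it covers all scales down to $\e/\e_1$. Since $(1-\g)^j=(10^{-j})^{\s}$, we conclude that for each $k\in\cA$, each $x_0\in\p\W\cap B_{1/2}$ and each $r\in(\e/\e_1,1/10)$,
\[
\osc_{B_r(x_0)\cap\W}(u_k+\a_k x_n)\le C\a_k\e\,r^{\s},
\]
and for $r\ge 1/10$ this bound is trivial since $\osc_{B_1\cap\W}v_k\le 1$.

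Now take $x,y\in B_{1/2}\cap\bar\W$ with $r:=|x-y|\ge\e/\e_1$, which we may assume $<1/10$. If $\dist(x,\p\W)\le 2r$, choose $x_0\in\p\W$ with $|x-x_0|\le 2r$; then $x,y\in B_{4r}(x_0)\cap\bar\W$ and the last display gives $|v_k(x)-v_k(y)|=(\a_k\e)^{-1}\big|(u_k+\a_k x_n)(x)-(u_k+\a_k x_n)(y)\big|\le C r^{\s}$. If instead $d:=\dist(x,\p\W)>2r$, then $B_d(x)\subset\W$, the function $w:=u_k+\a_k x_n$ has $|\triangle w|\le\e^2\a_k$ on $B_d(x)$, and picking $\zeta\in\p\W\cap\bar B_d(x)$ the previous estimate gives $\osc_{B_d(x)}w\le\osc_{B_{2d}(\zeta)\cap\W}w\le C\a_k\e(2d)^{\s}$; the interior gradient estimate on $B_d(x)$ then yields $\osc_{B_r(x)}w\le Cr\big(d^{-1}\osc_{B_d(x)}w+\e^2\a_k d\big)\le C\a_k\e\big(r\,d^{\s-1}+\e r\big)\le C\a_k\e\,r^{\s}$, using $d>2r$ and $\s<1$. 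Either way $|v_k(x)-v_k(y)|\le C_1|x-y|^{\s}$, which is the conclusion (routine adjustments of radii handle the cases where $d$ is bounded below or $\zeta$ is near $\p B_{1/2}$).

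The main obstacle is the bookkeeping of the second paragraph: checking that every dilated configuration still verifies all four hypotheses of Lemma \ref{lem:harnack}. The two delicate points are (i) that the flatness parameter fed into the lemma grows only like $(10(1-\g))^j\e$ --- slowly enough that the Laplacian bound and the bound on $\osc\xi_0$, which actually improve under zooming in, stay satisfied with room to spare --- and (ii) that the partition $\cA\cup\cB$ of the indices is genuinely preserved: an index can be kept in the ``flat'' alternative (B) precisely as long as $|u_k|$ stays below the square of the current flatness parameter, and the geometric growth of $\e_j$ guarantees exactly this, so no index is ever forced from (B) into the ``linear growth'' alternative (A) at a finer scale. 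This is what makes the iteration close, and it is why we shrink $\g$ at the outset (which also gives $\s<1$, as needed in the interior-to-boundary transfer). The remaining steps are routine.
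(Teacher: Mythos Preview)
Your proof is correct and follows essentially the same route as the paper's: iterate Lemma~\ref{lem:harnack} at boundary points to get geometric oscillation decay of $v_k$, then combine with interior elliptic estimates away from $\partial\W$. Your bookkeeping is in fact more explicit than the paper's---you carefully verify that all four hypotheses of Lemma~\ref{lem:harnack} persist under rescaling (in particular the (B) alternative, which the paper glosses over), and you make explicit the adjustment of $\g$ ensuring $\s<1$, which the paper uses tacitly in the interior step $|x-y|\,r^{\s-1}\le |x-y|^\s$.
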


\begin{proof}
 Assume first that $x\in \p \W$, and set $\e_1= \e_H/2$. Then applying Lemma \ref{lem:harnack} iteratively, we obtain that on $B_{10^{-i}/2}(x)$,
\[
 \a_k [=(z-x)_n + a_i]\leq u_k \leq \a_k [-(z-x)_n + b_i]
\]
with $b_i-a_i\leq 2(1-\g)^i \e$, provided that $2\cdot 10^i \e \leq \e_H$. We may rewrite
\[
 v_k (x) - v_k (y) = \frac{\a_k [x_n - y_n] -u_k(y) }{\a_k \e},
\]
and from the above inequality with $i$ such that $10^{-i-1}/2 \leq |x-y|\leq 10^{-i}/2$, we learn that
\[
 |v_k(x)-v_k(y)|\leq \frac{\a_k (b_i-a_i)}{\a_k \e}\leq 2 (1-\g)^{i} \leq C |x-y|^\s.
\]
Note that the assumption that $|x-y|\geq \e/\e_1$ ensures that $2\cdot 10^i \e \leq \e_H$.

If neither of $x,y$ are in $\p \W$, but either $d(x,\p \W)$ or $d(y, \p \W)$ is less than $2 |x-y|$ (say that $d(x, \p \W)\leq d(y, \p \W)$), find a $z\in \p \W$ with $|x-z|\leq 4 |x-y|$ and yet $\min\{|x-z|,|y-z|\} \geq |x-y|$. To see that such a $z$ exists, consider the annulus $W=\{w:|x-y|\leq |x-w|\leq 4 |x-y|\}$. From the assumptions, the free boundary is contained in the strip $\{a<x_n<b\}$ of thickness at most $\e$, and projects surjectively onto the plane $\{x_n=0\}$. On the other hand, $|x-y|\geq \e/\e_1 \gg \e$, and so (provided $\e_1$ is small) $W\cap \p \W$ is a set of diameter at least $4|x-y|$. Within this set we choose any point $z$ outside of $B_{|x-y|}(y)$. Then from the triangle inequality,
\[
 |v_k(x) - v_k(y)|\leq |v_k(x) - v_k(z)|+|v_k(z) - v_k(y)|\leq C|x-y|^\s.
\]

Finally, we have the case that both $d(x,\p \W)$ and $d(y,\p \W)$ are at least $2|x-y|$. Select $r=d(x,\p \W)$; then we have that
\[
 \osc_{B_r(x)} v_k \leq Cr^\s.
\]
Moreover, $v_k$ satisfies $|\triangle v_k|\leq \e$ on $B_r(x)$, so a standard estimate for the Laplace equation gives
\[
 |v_k(x)-v_k(y)|\leq C \frac{|x-y|}{r}[r^\s +r^2 \e] \leq C|x-y|^\s.
\]
\end{proof}

\section{Flatness Improvement}\label{sec:flat}

We now show that if all of the $u_k$ are either close to a nontrivial one-dimensional profile or extremely small, then on a smaller ball, those $u_k$ in the first category are even closer to some (slightly modified) profile. In the next section, we will deal with the propagation of smallness for the other $u_k$.

\begin{theorem}\label{thm:flatimp} There is a universal constant $C_2$ such that for every $\r<\r_0$ there is an $\e_F(\r)>0$ for which the following holds: if  $(\W,\{u_k\})\in \cQ(N,Q)$ is flat and there are nonnegative numbers $\{\a_k\}_{k=1}^N$ satisfying for $\e<\e_F$
\begin{enumerate}
 \item $\sum_{i=1}^N \a_k^2 \xi_k = \xi_0(0)$.
 \item Either (A) $\a_k [-x_n -\e] <u_k< \a_k [-x_n + \e]$  and $|\triangle u_k|\leq \e^2 \a_k$ in $\W$ or (B) $|u_k|\leq \e^2$ and $|\triangle u_k|\leq \e^4$  in $\W$ for each $k$.
 \item $\osc_{B_1} \xi_0 \leq \e^2$,
\end{enumerate}
then there are numbers $\{\a_k'\}$ also satisfying (1) and a direction $\nu$ such that for each $k$ for which (A) holds,
\[
 \a'_k [- x\cdot \nu -\frac{\r\e}{2}] <u_k< \a'_k [-x \cdot \nu + \frac{\r\e}{2}] \qquad \text{ in } \W \cap B_\r,
\]
and $|\nu -e_n|\leq C_2 \e$, $|\a_k - \a'_k|\leq C_2 \a_k \e$.
\end{theorem}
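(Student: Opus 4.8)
The plan is to argue by contradiction and compactness, using the Harnack-type estimate of Lemma \ref{lem:harnack} and its Hölder consequence Corollary \ref{cor:holderest} to extract a linearized limiting problem. Suppose the conclusion fails for some fixed $\r < \r_0$: then there is a sequence $(\W_j, \{u_k^j\}) \in \cQ(N,Q)$, flat, with coefficients $\{\a_k^j\}$ satisfying (1), and $\e_j \searrow 0$ such that alternative (2) holds with $\e = \e_j$, $\osc_{B_1}\xi_0^j \leq \e_j^2$, but no admissible $\{\a_k'\}$ and $\nu$ give the improved trapping on $B_\r$. Partition the indices into $\cA_j$ (those for which (A) holds) and $\cB_j$ (those for which (B) holds); passing to a subsequence we may assume these partitions are constant, say $\cA, \cB$. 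Since $0 \in \p\W_j$ and $\a_1^{j2}\xi_1^j$ is bounded below by nondegeneracy, $\cA$ is nonempty. For $k \in \cA$ define the normalized functions
\[
v_k^j(z) = \frac{u_k^j(z) + \a_k^j z_n}{\a_k^j \e_j},
\]
which by construction satisfy $|v_k^j| \leq C$ on $B_1 \cap \bar\W_j$ (from assumption (2A)) and, by Corollary \ref{cor:holderest}, are uniformly Hölder continuous on $B_{1/2} \cap \bar\W_j$ at scales down to $\e_j/\e_1$. Extending each $v_k^j$ continuously (e.g. using the Hölder bound across the thin free-boundary strip), we obtain via Arzelà–Ascoli a subsequence with $v_k^j \to v_k$ uniformly on $B_{1/2}$, and after a further subsequence $\a_k^j \to \a_k$ (bounded above and below), $\xi_k^j \to \xi_k$, and $\nu$-normals stabilize; since $\W_j$ is trapped in $\{|x_n| \leq \e_j\}$, the sets $\W_j \cap B_{1/2}$ converge in Hausdorff distance to the half-ball $B_{1/2}^- = B_{1/2} \cap \{x_n < 0\}$.

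The next step is to identify the limiting equation for the $v_k$, $k \in \cA$. Away from the free boundary, $|\triangle v_k^j| \leq \e_j / \a_k \to 0$, so each $v_k$ is harmonic in $B_{1/2}^-$. The key point is the linearized boundary condition on $\{x_n = 0\} \cap B_{1/2}$: expanding the free-boundary relation $\sum_k \xi_k \b_k^2 = \xi_0$ from the definition of $\cQ$ around the background profile $\a_k(-x_n)$ to first order in $\e_j$, and using the barriers from Lemma \ref{lem:harnack} to get two-sided control of the normal derivatives at free-boundary points, one shows that $w := \sum_{k \in \cA} (2\xi_k \a_k) v_k$ satisfies a homogeneous Neumann condition $\p_{x_n} w = 0$ on $\{x_n = 0\}$ — in fact this single scalar combination is exactly what the free-boundary condition sees at first order, while the contributions of $k \in \cB$ are negligible by the comparison argument sketched at the end of Lemma \ref{lem:harnack}. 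Thus $w$ extends by even reflection to a harmonic function on $B_{1/2}$, and moreover each individual $v_k$ with $k \in \cA$ satisfies a Neumann condition as well (since the relation must hold for the independently-varying profiles; this is where it is important that the $\xi_k$ are fixed positive constants). By interior estimates for harmonic functions with Neumann data, $|v_k(x) - v_k(0) - x' \cdot \n' v_k(0)|\leq C |x|^2$ on $B_{1/4}$ for a universal $C = C_2$, where $\n'$ denotes the tangential gradient; relabeling $p_k = \n' v_k(0)$, and using $v_k(0) \in [-1,1]$ absorbed into the constant, we get $|v_k(x) - p_k \cdot x'| \leq C_2 |x|$ up to an additive constant of size $\e_j$-order after rescaling back. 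Choosing the new direction $\nu$ to tilt $e_n$ by the increment determined by the common normal component $-\frac{1}{2\sum_{\cA}\xi_k\a_k^2}\sum_{\cA} 2\xi_k\a_k p_k$ — which automatically preserves (1) to the required order — and the new $\a_k'$ by the corresponding first-order correction, one checks that $\a_k'[-x\cdot\nu - \tfrac{\r\e}{2}] < u_k < \a_k'[-x\cdot\nu + \tfrac{\r\e}{2}]$ on $B_\r$ for all large $j$, provided $\r_0$ (hence the quadratic error) was chosen small relative to the universal constant. This contradicts the assumption that the conclusion failed, completing the proof.

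The main obstacle, and the step requiring genuine care, is the derivation of the linearized boundary condition: one must show that the viscosity-type free boundary relation in the definition of $\cQ$ passes to the limit as a \emph{linear} (Neumann) condition on the limit functions $v_k$, and crucially that the correct scalar combination of the $v_k$ — not each $v_k$ separately in general, but in our setting with fixed positive $\xi_k$ one does get each separately — is the one that is controlled. This requires carefully tracking, at free-boundary contact points, the barrier estimates $|(\phi_s)_\nu + 1| \leq C(s + \e^2)$ from Lemma \ref{lem:harnack} applied in both the $+e_n$ and $-e_n$ directions to sandwich the normal derivatives, handling the $\cB$-indices via the comparison argument that shows $|u_k| \leq C\e^2 d(\cdot, \p\W)$ near such points so they drop out of the first-order relation, and ruling out the possibility that an $\cA$-index transitions to having vanishing normal derivative within $B_\r$ (which cannot happen here because all $\a_k$ with $k \in \cA$ stay bounded below). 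A secondary technical point is the construction of the continuous extension of $v_k^j$ across the free-boundary strip so that the compactness argument applies cleanly; this is routine given the Hölder and Lipschitz bounds already in hand, but should be stated explicitly.
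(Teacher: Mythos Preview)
Your overall strategy---contradiction/compactness, normalized functions $v_k^j$, Harnack/H\"older to pass to a limit, linearized boundary problem, second-order Taylor expansion to contradict---is exactly the De Silva scheme the paper uses. There is, however, a genuine gap in the identification of the limiting boundary conditions.

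You assert that ``each individual $v_k$ with $k\in\cA$ satisfies a Neumann condition as well (since the relation must hold for the independently-varying profiles).'' This is false, and the reasoning behind it is wrong: there is only \emph{one} free boundary relation $\sum_k \xi_k\beta_k^2=\xi_0$, so linearizing it yields exactly \emph{one} scalar condition on the normal derivatives of the $v_k$, not $|\cA|$ separate ones. (Also, the correct weight in that single combination is $\xi_k\alpha_k^2$, not $2\xi_k\alpha_k$; linearize $(u_k)_\nu^2\approx\alpha_k^2(1-2\e\,\partial_n v_k)$.) The paper's $h=\xi_0^{-1}\sum_k\xi_k\alpha_k^2 v_k$ satisfies $\partial_n h=0$ on $\{x_n=0\}$; the individual $v_k$ in general do not.

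What replaces your missing $|\cA|-1$ conditions is an observation you omit entirely: since $u_k^j$ vanishes on $\partial\W_j$ for every $k$, at any free boundary point $x^j$ one has $v_k^j(x^j)=x_n^j/\e^j$ \emph{independently of} $k$, and hence in the limit $v_k=v_1$ on $\{x_n=0\}$ for all $k\in\cA$. These $|\cA|-1$ Dirichlet-type matching conditions, together with the single Neumann condition on $h$, give a well-posed system. The matching conditions are what force all the $v_k$ to share the \emph{same} tangential gradient $\tau=\nabla' v_k(0)$, and it is this common $\tau$ that determines the new direction $\nu$; the (generally distinct) normal derivatives $p_k=\partial_n v_k(0)$, constrained by $\sum_k\xi_k\alpha_k^2 p_k=0$, determine the corrections $\alpha_k'\approx\alpha_k(1-\e p_k)$. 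Without $v_k|_{\{x_n=0\}}=v_1|_{\{x_n=0\}}$, you have no reason for the tangential gradients to agree, and then no single $\nu$ can give the improved trapping for all $k$ simultaneously---so your construction of $\nu$ from a weighted average of the $p_k$ does not close.
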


\begin{proof}
\textbf{I: Compactness}
 We argue by contradiction. If the conclusion fails for some $\r<\r_0$, we may find a sequence $(\W^i,\{u_k^i\})$ and $\e^i\searrow 0$ satisfying (1)-(3) but not the conclusion. Up to passing to a subsequence and rearranging, we may assume that for all $i$, $k=1,\ldots,M$ satisfy (A) and $k=M+1,\ldots, N$ satisfy (B), with $1\leq M\leq N$. Again passing to a subsequence, we may further assume that $\a_k^i \rightarrow \a_k\geq 0$ for each $k$, with $\a_k=0$ for $k>M$. We may also assume that $\xi_k^i \rightarrow \xi_k$ constant, and from (1)
\[
 \sum_{i=1}^N \a_k^2 \xi_k = \sum_{i=1}^M \a_k^2 \xi_k= \xi_0.
\]
In particular, $\a_k^2\xi_k\geq Q^{-1}$ for some $k$, say $k=1$. 

For each $k=1,\ldots,M$, set $v_k^i$ to be
\[
 v_k^i(x) = \frac{u_k^i(x) + \a_k^i x_n }{\a_k^i \e^i}.
\]
From (2), we know that $v_k^i \in (-1,1)$ on $\bar{\W}^i$. We also have $\bar{\W}^i\rightarrow \bar{B}_1^-= \{x\in B_1: x_n \leq 0\}$ in Hausdorff distance, and the graph of $v_k^{i}$ over $\bar{\W}^i$ converges to a relatively closed set $K_k\ss \bar{B}_1^- \times [-1,1]$ in Hausdorff distance. Applying Corollary \ref{cor:holderest}, we see that for $i$ large,
\[
 |v_k^i(x) - v_k^i(y)|\leq C|x-y|^\s
\]
for all $x,y\in B_{1/2}\cap \bar{\W}^i$ with $|x-y|\geq \e_i/\e_1$. Passing to the limit, it follows that $K_k$ is in fact the graph of a H\"older continuous function $v_k$ on $\bar{B}^-_{1/2}$. Furthermore, we have that
\[
 |\triangle v_k^i|\leq \e^i
\]
on $\W^i$, so $v_k^i \rightarrow v_k$ locally uniformly on the open set $B_{1/2}^-$, and the $v_k$ are harmonic there. Note also that as $0\in \p \W^i$ for each $i$, we have $v_k^i(0)=v_k(0)=0$.

We now study the $v_k$ near the boundary $H=\{x\in B_{1/2}: x_n=0\}$. First of all, take any $x\in H$ and $x^i\in \p \W$ with $x^i \rightarrow x$. we have that $v_k^i(x^i)\rightarrow v_k(x)$ from the discussion above. As $u_k^i(x^i)=0$, though, we see that $v_k^i(x^i)=v_1^i(x^i)$ for each $k$. We thus obtain the $M-1$ distinct relations
\[
 v_k = v_1 \qquad \text{ on } H.
\]
We now seek one additional boundary condition on $H$ which will imply that the $v_k$ solve a well-posed boundary value problem. Indeed, set $h=\xi_0^{-1}\sum_{i=1}^M \a_k^2 \xi_k v_k$; we claim that $h$ satisfies the Neumann condition $\p_n h =0 $ on $H$. We will check this condition in the viscosity sense: for each $z\in H$, $\r$ sufficiently small, and $\phi(x) =a + q x_n + \s_1 x_n^2 + \s_2 [(n-1)x_n^2 - |x'-z'|^2]  $ (with $\s_1,\s_2>0$), $a=\phi(z)=h(z)$, and $\phi<h$ on $\bar{B}_\r^-(z)\sm \{z\}$, we must establish that $q \geq 0$ (and also the analogous property for functions touching $h$ from above, which will follow similarly).

\textbf{II: Neumann Condition}
Let $\phi$, $z$, and $\r$ be fixed as above, and yet $q<0$. We begin by studying the functions $v_k-h$, which are harmonic on $B_1^-$ and vanish on $H$. This means they are $C^2$ on the closed half-ball, and in particular admit the Taylor expansion
\[
 (v_k-h)(y) = q_k y_n + O(|y-z|^2)
\]
on $B_\r^+(z)$. Then, up to replacing $q$ by $q/2$, increasing $\s_2$, and decreasing $\r$, we have that for every $k$, $v_k \geq q_k y_n + \phi$ on $\bar{B}_\r^-(z)$ with equality only at $z$.

Next, let $\w^i$ bound the distance between the graphs of $v_k^i$ and $v_k$ (noting that $\w^i\rightarrow 0$). In particular, this means that
\[
 v_k^i \geq q_k y_n + \phi - \w^i
\]
for each $k\leq M$ on $\W^i\cap B_\r^-(z)$. For a constant $C$ depending on $\phi$, we still have
\[
 v_k^i \geq q_k y_n + \phi - C\w^i
\]
on $\W^i\cap B_\r(z)$. Rewriting, this implies
\[
 u_k^i(y) \geq \a_k^i(-1 + q_k \e^i)y_n + \a_k^i \e^i \phi -C\a_k^i \e^i \w^i.
\]
We then also have
\[
 u_k^i(y) > \a_k^i(1 - q_k \e^i)[-y_n + \e^i \phi - C_0 \e^i (\w^i +\e_i)]
\]
(where $C_0$ is a larger constant depending on $\phi$).

Set $\phi^i_t$ to be
\[
 \phi^i_t = -y_n + \e^i \phi + t \e^i (\w^i +\e_i).
\]
For $t=-C_0$, we have that $u_k^i > \a_k^i(1 - q_k \e^i)\phi^i_t$ on $B_\r(z)\cap \W^i$. However, when $t>2$ and $\e^i q_k\ll 1$, we have (from the definition of $\w^i$) that $u_k(z) < \a_k^i(1 - q_k \e^i)\phi^i_t$. Let $t_*$ be the largest $t$ for which (for all $k$)
\[
 u_k^i \geq \a_k^i(1 - q_k \e^i)\phi^i_t \qquad k=1,\ldots,M_1; y\in B_\r(z), 
\]
and assume that we have equality at a point $x\in \bar{B}_\r(z)$ for some index $k$. Let us check that $x$ must lie in the interior of $B_\r(z)$, at least for large $i$. Indeed, we have that $v_k \geq \eta + \phi + q_k y_k$ for some $\eta>0$, on $\p B_\r^- (z)$. Hence on $\p B_\r(z)\cap \bar{\W}^i$, we have 
\[
 u_k^i \geq \a_k^i(1 - q_k \e^i)[-y_n + \e^i \phi -C_0 \e^i (\w^i +\e_i) +\e^i \eta].
\]
For large $i$, $\eta > (t+C_0)(\w^i+\e^i)$ for the entire range of $t$ considered, and so
\[
 u_k^i > \a_k^i(1 - q_k \e^i)[-y_n + \e^i \phi -t \e^i (\w^i +\e_i)]
\]
on $\p B_\r(z)$. Hence $x\in B_\r(z)$. Furthermore, we have that 
\[
\triangle \a_k^i(1 - q_k \e^i) \phi_t^i \geq \frac{1}{2} \a_k^i \s_1 \e^i 
\]
while by assumption 
\[
 |\triangle u_k^i| \leq  \a_k^i (\e^i)^2.
\]
This means $x$ cannot lie in $\W^i$ (or in the complement of $\bar{\W}^i$), and so must be in the free boundary $\p \W^i$. Then notice that we have equality at $x$ for \emph{every} $k=1,\ldots,M$, as both sides are $0$ there. The normal derivative $\p_\nu \phi_t^i(x) = - |\n \phi_t^i(x)|$ in the direction $\nu = -\n \phi_t^i(x)/|\n \phi_t^i(x)|$ has $|\p_\nu \phi_t^i(x) - (-1 + \e^i q)|\leq C\e^i \r$ (here $C$ depends on $\s_1,\s_2$). Up to selecting $\r$ smaller in terms of $\phi$, this gives $\p_\nu \phi_t^i(x) \leq -1 + \e^i/2 q$ (recall that we are assuming for contradiction that $q<0$).

Applying the definition of $\cQ$ with $\b_i = \a_k^i(1 -q_k \e^i)|-1 +\e^i q/2|  $ for $k\leq M$ and $\b_k=0$ otherwise, we see that
\[
 \sum_k=1^{M} \xi_k^i (\a_k^i)^2[1  -2 \e^i q_k -\e^i q ]\leq \xi_0^i + C (\e^i)^2; 
\]
all terms with $(\e^i)^2$ were moved to the right and combined. Using the definition of $M$, this is equivalent to
\[
 \sum_k=1^{M} \xi_k^i(\a_k^i)^2 [1 -2 \e^i  q_k -\e^i  q ] + \sum_{k=M+1}^N \xi_k^i (\a_k^i)^2 \leq \xi_0^i + C (\e^i)^2, 
\]
and so
\[
 \sum_k=1^{M} \xi_k^i (\a_k^i)^2 [ -2  q_k - q ] \leq C \e^i.
\]
Now take the limit in $i$, to get
\[
 \sum_k=1^{M} \xi_k \a_k^2 [ -2 q_k -q ] \leq 0.
\]
From the definition of $h$ and $q_k$, we easily see that
\[
 \sum_{k=1}^M \xi_k \a_k^2 q_k = 0,
\]
which implies that $q\geq 0$, contradicting our assumption. The other viscosity inequality may be verified similarly.

\textbf{III: Conclusion}
We have just shown that $h$ satisfies the Neumann condition on $H$ in the viscosity sense. In addition, each $v_k$ has $|v_k|\leq 1$ on $B_{1/2}^-$, and so $|h|\leq 1$ as well. It follows that $h$ is actually a classical solution (see \cite{DeSilva}), and that $h$ is $C^2$ on $\bar{B}_{3/8}^-$ with an estimate
\[
 \|h\|_{C^2(\bar{B}_{3/8}^-)} \leq C=C(n).
\]
Recalling that $v_k -h$ is harmonic, bounded by $2$, and vanishes on $H$, we have the standard estimate for the Dirichlet problem:
\[
 \|v_k\|_{C^2(\bar{B}_{1/4}^-)} \leq C=C(n).
\]
In particular, it follows that $v_k$ admits the Taylor expansion
\[
 v_k(x) = v_k(0) + \n v_k(0)\cdot x + O(|x|^2)= p_k x_n + \t \cdot x' + O(|x|^2),
\]
(recalling that $v_k(0)=0$). The lack of subscript on $\t$ is very much intentional: as $v_k -v_1 =0$ on $H$, we have that all of the tangential derivatives of the $v_k$ are identical. Using the Neumann condition on $h$, we see that
\[
 \sum_{k=1}^M \xi_k \a_k^2 p_k = \p_n h = 0.
\]

Let $\w^i$ bound the Hausdorff distance between the graphs of $v_k^i$ and $v_k$ for all $k=1,\ldots,M$, as well as $\max_k |\xi_k\a^2_k - \xi^i_k(\a_k^i)^2|$; we have that $\w^i\rightarrow 0$. Then on $\W^i \cap B_{1/4}^-$, we have that
\[
 |v_k^i - v_k|\leq \w^i,
\]
and so
\[
 |v_k^i(x) - p_k x_n - \t \cdot x'|\leq \w^i + C|x|^2.
\]
This implies that
\[
 |v_k^i(x) - p_k x_n - \t \cdot x'|\leq C(\w^i + |x|^2)
\]
on $B_{1/4}\cap \W^i$: indeed, for any $x\in B_{1/4}^+\cap \W^i$, we have that
\[
 \inf_{y\in B_{1/4}^-} |v_k^i(x) - v_k(y)| + |x-y|\leq C\w^i,
\]
while
\[
 |v_k(y) - p_k x_n -\t \cdot x'|\leq C|x-y| +C|y|^2.
\]
Combining implies the inequality above.

Rewriting, we have
\[
 |u_k^i(x) + \a_k^i ( x_n - \e^i p_k x_n - \e^i \t \cdot x')|\leq C\a_k^i \e^i (\w^i + |x|^2).
\]
Up to an additional error term of order $(\e^i)^2$, this may be rewritten as
\[
 |u_k^i(x) + \a_k^i (1-\e^i p_k) ( x_n - \e^i \t \cdot x')|\leq C\a_k^i \e^i (\w^i + \e^i + |x|^2).
\]
Set $\nu = (-\e^i \t,1)/|(-\e^i \t,1)|$; then we have $|\nu -e_n|\leq C(n)\e^i$ and $|\nu - (-\e^i,\t)|\leq C(n)\e^i$. As this means that $|x_n-\e^i\t \cdot x' -x\cdot \nu|\leq C\e^i|x|\leq C((\e^i)^2 +|x|^2)$, our estimate gives
\[
 |u_k^i(x) + \a_k^i (1-\e^i p_k) x\cdot \nu |\leq C\a_k^i \e^i (\w^i + \e^i + |x|^2).
\]

Define $\a_k^{i,*} = \a_k^i (1-\e^i p_k)$ for $k = 2,\ldots,M$, $\a_K^{i,*}=\a_k^i$ for $k>M$, and $\a^{i,*,t}_1 = \a_1^i (1-\e^i p_1 + t)$. Then
\begin{align*}
 \xi_1^i &(\a_k^{1,*,t})^2 + \sum_{i=2}^N (\a_k^{i,*})^2 \xi_k^i\\
&= \sum_{k=1}^M \xi_k^i (\a_k^i)^2 [ 1 - 2\e^i p_k +(\e^i p_k)^2] +\sum_{k=M+1}^N \xi_k^i (\a_k^i)^2 + \xi_1^i (\a_1^i)^2 t (2-2\e^i p_1 +t)  \\
&= \xi_0^i + \sum_{k=1}^M \xi_k^i (\a_k^i)^2 [ - 2\e^i p_k +(\e^i p_k)^2] + \xi_1^i (\a_1^i)^2 t (2-2\e^i p_1 +t)\\
&= \xi_0^i + S +  \xi_1^i (\a_1^i)^2 t (2-2\e^i p_1 +t).
\end{align*}
Let us show that $S$ is small. Indeed, we know that
\[
 |\sum_{k=1}^M \xi_k^i (\a_k^i)^2 p_k| =  |\sum_{k=1}^M [\xi_k^i (\a_k^i)^2 - \xi_k \a_k^2]  p_k|\leq C\w^i. 
\]
It follows that
\[
 |S|\leq C\e^i \w^i.
\]
Recall that both $\xi_1^i$ and $\a_1^i$ are bounded above and below by universal constants; it follows that we may select a $t$ with $|t|\leq C\e^i \w^i$ such that 
\[
 \xi_1^i (\a_1^i)^2 t (2-2\e^i p_1 +t) = -S.
\]
We name $\a_1^{i,*}=\a_1^{i,*,t}$ for this $t$. 

Our previous estimate may now be written as
\[
 |u_k^i(x) + \a_k^{i,*} x\cdot \nu |\leq C_* \a_k^{i,*} \e^i (\w^i + \e^i + |x|^2),
\]
where $\nu$ is a unit vector with $|\nu-e_n|\leq C_2\e^i$, while the $\a_k^{i,*}$ satisfy (1) and have $|\a_k^{i,*} - \a_k^i|\leq C_2 \a_k^i \e^i$ (the constants $C_2,C_*$ are universal). Now select $\r_0<1/4$ so that $\r_0 C_* <1/8$, and then $i$ so large that $C_*(\w^i + \e^i) < \r/8$; we have shown that
\[
 |u_k^i(x) + \a_k^{i,*} x\cdot \nu |\leq \r/4 \a_k^{i,*} \e^i,
\] 
on $B_{\r}\cap \W^i$, which implies the theorem.
\end{proof}

\section{Iteration Procedure}\label{sec:iteration}

We begin with a lemma about solutions to a Poisson equation.

\begin{lemma}\label{lem:pullup} There exists $\r_D$ small such that for any $\r<\r_D$, there is an $\e_D(\r)$ such that for any $\e<\e_D$ the following holds: let $\W$ be an open subset of $B_1$ with $0\in \p \W$,  $\p \W\ss \{|x_n|\leq \e\}$, and $(0,-1/2)\in \W$. Let $v$ be a continuous function on $\W$, vanishing on $\p \W$, and satisfying $|v|\leq 1$ and $|\triangle v|\leq \e^2$. Assume that for some $x_0\in B_\r$, $v(x_0)\geq \frac{1}{4}\r$. Then there is an $\a$ with $c_D\leq \a \leq C_D$ such that on $B_{\sqrt{\e}}$, we have that 
\[
 |v(y) + \a y_n| \leq C_D\e.
\]
The constants may depend on $\r$ and universal quantities.
\end{lemma}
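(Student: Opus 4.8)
The plan is to argue by compactness: blowing the picture up so that in the limit $\W$ becomes a half-ball and $v$ a bounded harmonic function vanishing on the bounding hyperplane, for which the conclusion is immediate from odd reflection and the interior second-order estimate. The scale $\sqrt{\e}$ enters precisely because the quadratic remainder $O(|y|^2)$ in the Taylor expansion of the limit matches the slab width $\e$ when $|y|\sim\sqrt\e$.

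Before the contradiction argument I would record two reductions. Geometrically, since $\p\W\ss\{|x_n|\le\e\}$ and $B_1\cap\{x_n<-\e\}$ is connected, disjoint from $\p\W$, and contains the point $(0,-1/2)\in\W$, the whole lower region $B_1\cap\{x_n<-\e\}$ lies in $\W$; likewise $B_1\cap\{x_n>\e\}$ is either contained in $\W$ or disjoint from $\overline\W$, so near the slab $\W$ is trapped between two hyperplanes $\e$ apart. Analytically, replacing $v$ by the harmonic function $\bar v$ on $\W\cap B_{1/2}$ with the same boundary values (equal to $v$ on $\p B_{1/2}\cap\W$ and to $0$ on $\p\W$) costs only $|v-\bar v|\le C\e^2$ by the maximum principle, since the domain has diameter $\le1$ and $|\triangle v|\le\e^2$; so it suffices to prove the estimate for $\bar v$, which is genuinely harmonic, bounded, vanishes on $\p\W$, satisfies $\bar v(0)=0$, and still has $\bar v(x_0)\ge\rho/8$.

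Now suppose the lemma fails for the fixed $\rho$: there are $\e_i\searrow0$, open sets $\W_i$ and harmonic functions $\bar v_i$ as above for which, for \emph{no} $\a\in[c_D,C_D]$, one has $\sup_{B_{\sqrt{\e_i}}}|\bar v_i+\a x_n|\le C_D\e_i$, with $c_D,C_D$ to be fixed at the end. Interior elliptic estimates control $\bar v_i$ well inside $\W_i$; near the slab, comparison with harmonic functions vanishing on $\{x_n=-\e_i\}$, together with $\bar v_i=0$ on $\p\W_i$ and the nondegeneracy built into $\bar v_i(0)=0$ versus $\bar v_i(x_0)\ge\rho/8$, yields a uniform modulus of continuity for $\bar v_i$ on $B_{3/8}$ up to $\{x_n=0\}$. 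Passing to a subsequence, $\bar v_i\to v_\infty$ locally uniformly, where $v_\infty$ is harmonic on $B_{3/8}\cap\{x_n<0\}$, vanishes on $B_{3/8}\cap\{x_n=0\}$, is bounded, has $v_\infty(0)=0$, and inherits $v_\infty(x_\infty)\ge\rho/8$ for some $x_\infty\in\overline{B_\rho}$ (in the case $\{x_n>\e_i\}\ss\W_i$ one first checks $v_\infty$ is odd in $x_n$, using that the limit of $\p\W_i$ fills $\{x_n=0\}$). The odd reflection of $v_\infty$ is harmonic on $B_{3/8}$, bounded, and vanishes on $\{x_n=0\}$, so all its tangential derivatives at $0$ vanish and $v_\infty(y)=-\a y_n+O_n(|y|^2)$ for a single $\a$; boundedness forces $\a\le C(n)$, while $v_\infty(x_\infty)\ge\rho/8$ with $|x_\infty|\le\rho$ forces $\a\ge1/16$ once $\rho$ is small, so we set $c_D=1/16$, $C_D=C(n)$. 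Feeding this expansion back and evaluating on $B_{\sqrt{\e_i}}$ gives $|\bar v_i+\a x_n|\le o(1)+C_n\e_i$ there; the $o(1)$ is upgraded to $O(\e_i)$ by repeating the compactness step after rescaling by dyadic factors down to scale $\sqrt{\e_i}$ (now permitted, since the flatness of $\p\W_i$, hence the effective slab width, improves at each step), contradicting the choice of $\bar v_i$.

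\textbf{Main obstacle.} The delicate point is the compactness input up to the thin free boundary $\p\W$: no regularity of $\p\W$ is assumed, so one must both produce the uniform modulus of continuity and, crucially, rule out sublinear cusp-type growth of $v$ near $\p\W$, so that the blow-up limit is genuinely the linear half-plane profile with $\a$ bounded below rather than, say, a square-root singularity. This is where the one-sided structure of $\p\W$ (it sits in a thin slab with $\W$ essentially below it) together with the nondegeneracy of $v$ must be used in an essential way, via carefully chosen barriers; the remaining ingredients — the harmonic comparison, the odd reflection, the Taylor expansion, and the bookkeeping of the $\sqrt\e$-scale — are routine.
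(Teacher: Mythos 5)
Your overall architecture (reduce to a harmonic $\bar v$, blow up, odd reflection, Taylor expansion, and the heuristic that $\sqrt\e$ is where the quadratic remainder meets the slab width) is a genuinely different route from the paper, and several pieces are fine: the $O(\e^2)$ harmonic replacement, the identification of the limit profile $-\a y_n$, and the bounds $c_D\leq\a\leq C_D$ from $v_\8(x_\8)\geq\r/8$ together with boundedness. But the quantitative heart of the lemma is missing. A single compactness argument only yields $|\bar v_i - v_\8|=o(1)$ uniformly on a \emph{fixed} ball, with no rate; evaluating on $B_{\sqrt{\e_i}}$ then gives $|\bar v_i+\a x_n|\leq o(1)+C\e_i$, and the $o(1)$ dominates. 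Your proposed repair --- iterate the compactness at dyadic scales down to $\sqrt{\e_i}$ --- is asserted rather than proved, and the stated justification is backwards: there is no free boundary condition here, so $\p\W$ does not improve; the slab width is \emph{fixed} at $\e$, hence relative to scale $2^{-j}$ it is $\e 2^j$, which \emph{grows} as you zoom in. The iteration is viable only down to scale $\sqrt\e$ precisely because that is where the relative slab width (equivalently, the $O(\a\e)$ error in the Dirichlet data of the rescaled function on $\p\W$) catches up with the geometrically decaying flatness. To make your scheme rigorous you would need a uniformly quantified improvement-of-flatness lemma for a single function vanishing on a thin set with only a Dirichlet condition, with inductive hypotheses tracking the competition between $\s_j$ (the flatness) and $\e2^j$ (the relative slab width), plus the uniform modulus of continuity up to the slab at every scale; none of this is set up, and you yourself flag the compactness-up-to-$\p\W$ input as an unresolved ``main obstacle.''

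For comparison, the paper avoids iteration entirely. After a preliminary compactness step that only needs to produce the one-sided qualitative bound $v\geq-\d\r$ on $B_{10\r}$ (much weaker than two-sided locally uniform convergence), it sandwiches $v$ between explicit sub- and supersolutions $g+v_*\leq v\leq f+v_*$ on the fixed smooth domain $(B_{10\r}\cap\{x_n<\e\})\sm B_{c_0\r}(x_0)$, where the inner sphere carries the nondegeneracy $v\geq\r/8$. Because the barriers live on a fixed domain with the flat top $\{x_n=\e\}$, their Taylor expansions at $(0,\e)$ are classical, their linear coefficients $a_\pm$ differ by $O(\e)$, and the conclusion on $B_{\sqrt\e}$ drops out in one step from the quadratic remainder. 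Two smaller points: the modulus of continuity up to the slab that you defer to ``carefully chosen barriers'' is actually the easy part (comparison with $q_1$ solving $\triangle q_1=-\e^2$ below $\{x_n=\e\}$ gives $|v|\leq C(x_n-\e)^-$); and in your case $\{x_n>\e\}\ss\W$, the claim that $v_\8$ is odd because ``the limit of $\p\W_i$ fills $\{x_n=0\}$'' is unjustified --- $\p\W_i$ is merely contained in the slab and its limit can be a small subset of the hyperplane --- though the paper's own proof also implicitly assumes $\W\cap B_{1/2}\ss\{x_n<\e\}$.
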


\begin{proof} 
We will always assume that $\e_D \ll \r^2$. First, by comparison with a function $q_1$ solving the Dirichlet problem
\[
 \begin{cases}
  \triangle q_1 = -\e^2 & \text{ on } \{x\in B_1 : x_n<  \e\}\\
  q_1 = 0 & \text{ on } \{x\in B_1 : x_n= \e\}\\
  q_1 = 1 & \text{ on } \{x\in \p B_1 : x_n <\e\},
 \end{cases}
\]
we have that $|v|\leq C(x_n -\e)^-$ on $B_{1/2}$. One consequence is that $(x_0)_n \geq c \r$, so that $B_{c \r}(x_0)\ss \W$. Note that for any $y\in B_{1/4}$ with $y_n<-2\e$, we have $B_{|y_n|/2}(y)\ss \W$ and that $|v|\leq C |y_n|$ on that ball, so from standard regularity this gives $|\n v(y)|\leq C$. In particular, $|\n v|\leq C$ on $B_{c\r}(x_0)$, and so $v\geq 1/8 \r$ on some smaller ball $B_{c_0\r}(x_0)$ (for convenience, we assume $c_0<1$).

Our first objective will be to show that for any $\d>0$, there is a $\r_\d$ such that for $\r<\r_\d$, the assumptions of the lemma imply that  on $B_{10\r}$ we have $v\geq -\d \r$. We argue by contradiction, assuming that for some $\d>0$ no such $\r_\d$ can be found. This means there exists a sequence of functions $v^i$ and sets $\W^i$ as in the lemma, with $\r^i\rightarrow 0$ (recall $(\e^i)^{1/2} \leq r^i$). Let $u^i$ be the dilated functions
\[
 u^i = \frac{v^i(\r^i \cdot)}{\r^i},
\]
defined on the ball $B_{1/\r^i}$. From the assumptions, $|\triangle u^i| \leq \r^i (\e^i)^2$ on $\W^i/\r^i$, and our first estimate guarantees that $|u^i|\leq C (x_n - \e/\r^i)^- \leq C (x_n - \r^i)^-$.  On the other hand, we know that at some point $x^i$ in $B_1$, $u^i \geq 1/4$, and at some point $y^i$ in $B_{10}$, $u^i<-\d$.

Along a subsequence, the functions $u^i$ converge locally uniformly on the half-space $H=\{x_n<0\}$ to a harmonic function $u$, which satisfies the estimate $|u(x)|\leq Cx_n^-$. Applying Liouville's theorem to the extension of $u$ by odd reflection, we see that $u = a x_n$ for some $|a|\leq C$. We also, however, have that along a further subsequence $x^i\rightarrow x^\8 \in B_1\cap H$ and $y^i \rightarrow y^\8\in B_{10}\cap H$, and that $u(x^\8)\geq 1/4$ while $u(y^\8)\leq -\d$. This clearly contradicts our expression for $u$.

Next, we solve several auxiliary Dirichlet problems. Let $\phi:[0,\8)\rightarrow [0,1]$ be a smooth cutoff function which vanishes on $[0,2]$ and is $1$ on $[3,\8)$.  The first is a harmonic function $q_2$ which satisfies
\[
 \begin{cases}
  \triangle q_2 = 0 & \text{ on } U:=(B_{10 \r}\cap \{x_n<\e\})\sm B_{c_0 \r}(x_0)\\
  q_2 = -2\d\r \phi(|x'|/\r) & \text{ on } B_{10\r} \cap \{x_n=\e\}\\
  q_2 = -2\d\r \text{ on } \p B_{10 \r}\cap \{x_n<\e\}
  q_2 = 0 & \text{ on } \p B_{c_0 \r}(x_0)
 \end{cases}
\]
The main fact we need about $q_2$ is that $|\n q_2|\leq C_2 \d$, uniformly in $\r,x_0$, and $\e$; this is easy to see after dilating by a factor of $\r$. Then we construct
\[
\begin{cases}
  \triangle f = \e^2 & \text{ on } U\\
  f = 0  & \text{ on } \p (B_{10 \r}\cap \{x_n<\e\}) \\
  f = v  & \text{ on } \p B_{c_0 \r}(x_0),
 \end{cases} 
\]
\[
\begin{cases}
  \triangle g_t = -\e^2 & \text{ on } U\\
  g_t = t  & \text{ on } \p (B_{10 \r}\cap \{x_n<\e\}) \\
  g_t = v  & \text{ on } \p B_{c_0 \r}(x_0),
 \end{cases} 
\]
and
\[
\begin{cases}
  \triangle v_* = 0 & \text{ on } U\\
  v_* = v  & \text{ on } \p (B_{10 \r}\cap \{x_n<\e\}) \\  
  v_* = 0  & \text{ on } \p B_{c_0 \r}(x_0).
 \end{cases} 
\]

For $v_*$, we have that $|v_*|\leq C(x_n-\e)^-$, and in particular that $|v^*|\leq C \e$ on $\{|x_n\leq \e\}$. Moreover, on $B_{c\r}(0,\e)$  we have the Taylor expansion
\[
 |v_*(x) -  a_* (x_n-\e)^-|\leq C|x-(0,\e)|^2,
\]
where $|a_*|\leq C_1$.

For the function $g_t$, we first choose a useful value for the parameter $t\in [-\r,0]$. To do so, note that we have $|\n g_t|\leq C$ uniformly in $\e,\r,x_0,$ and $t$ on the set $\{x\in B_{10\r}: x_n\in (-c\r,\e)\}$ (for some universal $c$ chosen so that this set does not intersect $B_{c_0 \r}$). In particular, by choosing $t= - C^* \e$ we can ensure that on $\{x\in B_{10\r}: x_n\in (-\e,\e)\}$, we have $g_t + v_*\leq 0$. Denote this function by $g$. Near $(0,\e)$, this function admits the Taylor series
\[
 |g(x) + C^*\e -  a_- (x_n-\e)^-|\leq C|x-(0,\e)|^2
\]
with $c_1\leq a_-\leq C_1$ , and all constants universal. The bounds on $a_-$ follow by noting that on $\p B_{c_0 \r}$, $c\r \leq v\leq C\r$, and applying an appropriate comparison argument.

Likewise for $f$, we have the Taylor series
\[
 |f(x) -  a_+ (x_n-\e)^-|\leq C|x-(0,\e)|^2
\]
with $c_1\leq a_+\leq C_1$. 

At this point we fix $\d$ so that $C_2\d$ (which bounds $|\n q_2|$) is less than $c_{1}/2$. Then we select the corresponding $\r_\d$ as $\r_D$, which ensures that $v\geq -\d \r$ on $B_{10\r}$. 

By the maximum principle, we have that $g+q_2 \leq g+ v_* \leq v \leq f + v_*$ on $U$. The first of these implies that $a_* + a_- \geq -\p_n q_2(0,\e) + a_- \geq c_1/2$. Consider the difference $g-f$: this has Laplacian bounded by $2\e^2$ on $U$, and is controlled by $C^*\e$ on $\p U$. It follows that $|\n (g-f)|\leq C\e/\r =C\e$, and so in particular $|a_+-a_-|\leq C\e$. We then have
\[
 -C^*\e + (a_* +a_-) (x_n-\e)^- -C|x-(0,\e)|^2 \leq  v \leq (a_* +a_+) (x_n-\e)^- +C|x-(0,\e)|^2
\]
on $B_{c\r}(0,\e)$, which implies that
\[
 |v - (a_*+a_-) x_n| \leq C\e
\]
on $B_{\sqrt{\e}}$.
\end{proof}

We now prove our main regularity theorem.

\begin{theorem}\label{thm:main} There is an $\e_R>0$ such that for any flat $(\W,\{u_k\})\in \cQ(N,Q)$ with flatness $f<\e_R$, $\osc_{B_1}\xi_0 \leq \e_R$, and $|\D u_k|\leq \e_R$, we have that for some $\nu\in S^{n-1}$, $\p\W\cap B_{1/2}$ is the graph of a $C^{1,\a}$ function $\g:B_{1/2}\cap \{x\cdot \nu=0\}\rightarrow \R$, with
\[
 \|\g\|_{C^{1,\a}} \leq C,
\]
and $\a,C$ universal.
\end{theorem}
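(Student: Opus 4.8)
The plan is to iterate the flatness improvement estimate of Theorem~\ref{thm:flatimp} at every point of $\p\W\cap B_{1/2}$, recording how the profile direction and the growth coefficients $\a_k$ change at each step, and to deal separately with those indices $k$ for which $u_k$ starts out negligibly small but eventually develops linear growth --- these will be caught with Lemma~\ref{lem:pullup}. To begin I would fix a universal $\r<\min\{\r_0,\r_D,\tfrac18\}$, set $\a=\log_{1/\r}2\in(0,\tfrac12)$, and choose an initial flatness parameter $\e_0$ of size $\sim\e_R^{1/4}$, arranged so that $\e_0\le\e_F(\r)$ while at the same time $\e_R\le\e_0^4$, $\e_R\le\tau\e_0^2$ (for a universal threshold $\tau$), and $f\le\e_0^2$. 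After rotating so the initial half-plane datum points along $e_n$, I would classify an index $k$ as category (A) when its coefficient $\a_k\ge\tau$ and as category (B) otherwise, and then check --- using $f<\e_R$, $\osc_{B_1}\xi_0\le\e_R$, $|\D u_k|\le\e_R$, and for the (B)-indices the barrier comparison from the first lines of the proof of Lemma~\ref{lem:pullup} --- that hypotheses (1)--(3) of Theorem~\ref{thm:flatimp} hold at scale $1$ with $\e=\e_0$. Note that under dilation by $\r^j$ one has $|\D u_k^{(j)}|\le\r^j\e_R$, so the Laplacian hypotheses only improve, and since $\r<1/4$ this stays compatible with the geometrically decaying thresholds $\e_j:=2^{-j}\e_0$ below.

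While the partition into categories (A) and (B) does not change, I would apply Theorem~\ref{thm:flatimp} repeatedly. After $j$ steps, dilating $B_{\r^j}$ back to $B_1$, this produces unit vectors $\nu^{(j)}$ and coefficients $\a_k^{(j)}$ with $\sum_k\xi_k(\a_k^{(j)})^2=\xi_0(0)$, satisfying $|\nu^{(j+1)}-\nu^{(j)}|\le C_2\e_j$ and $|\a_k^{(j+1)}-\a_k^{(j)}|\le C_2\a_k^{(j)}\e_j$, together with $|u_k^{(j)}+\a_k^{(j)}(x\cdot\nu^{(j)})|\le\a_k^{(j)}\e_j$ on $B_1\cap\W^{(j)}$ for $k$ in category (A). Since $\e_j\to0$ geometrically, $\nu^{(j)}\to\nu^*$ with $|\nu^{(j)}-\nu^*|\le C\e_j$, and $\a_k^{(j)}\to\a_k^*$. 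Because each such $u_k$ vanishes on $\p\W$, the two-sided bound forces $\p\W\cap B_{\r^j}\ss\{|x\cdot\nu^{(j)}|\le\tfrac12\e_j\r^j\}$, and combining with the convergence of directions gives $\p\W\cap B_r\ss\{|x\cdot\nu^*|\le Cr^{1+\a}\}$ for all small $r$.

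The subtle point is that a category-(B) index need not stay negligible: if the true growth of $u_k$ at the point under consideration is nonzero, then at some scale $\sup_{B_{\r^j}}|u_k|$ becomes comparable to $\r^j$, and the (B)-bound fails. When this first happens I would apply Lemma~\ref{lem:pullup} to the rescaled configuration; it yields a linear profile $-\a_k y_n$ with $c_D\le\a_k\le C_D$ on a ball $B_{\sqrt\e}$, which after one more rescaling reclassifies $k$ as category (A) with a coefficient bounded below by a universal constant. The one-sided lower bound $u_k>\a_k(-x\cdot\nu-\e)$ generated by each further application of Theorem~\ref{thm:flatimp} then guarantees $k$ never leaves category (A) again. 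As there are only $N$ indices there are at most $N$ such transitions; between consecutive transitions the flatness decays geometrically as in the previous paragraph, and each transition costs only a controlled number of dyadic scales (with a controlled worsening of constants), so after at most $N$ of them all indices lie in category (A) and the clean iteration runs forever. The upshot is an estimate $\p\W\cap B_r\ss\{|x\cdot\nu^*|\le Cr^{1+\a'}\}$ for small $r$, with $\a'>0$ and $C$ universal.

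Finally I would run all of this at an arbitrary $x_0\in\p\W\cap B_{1/2}$ rather than only at $0$: restricting a flat element of $\cQ(N,Q)$ to $B_{1/2}(x_0)$ and rescaling gives again a flat element with comparable constants (the relation $\sum_k\xi_k\a_k^2=\xi_0(x_0)$ holds up to $O(\e_R)$ since $\osc_{B_1}\xi_0\le\e_R$, which can be absorbed into $\e_0$). This produces a normal $\nu(x_0)$ with $\p\W\cap B_r(x_0)\ss\{|(x-x_0)\cdot\nu(x_0)|\le Cr^{1+\a'}\}$ for $r<r_0$; comparing the slabs at two boundary points at the scale $r\sim|x_0-x_1|$ gives $|\nu(x_0)-\nu(x_1)|\le C|x_0-x_1|^{\a'}$, and a closed set trapped near each of its points in slabs this thin whose directions are H\"older continuous in the point is locally the graph of a $C^{1,\a'}$ function with $\|\g\|_{C^{1,\a'}}\le C$, which is the assertion (after relabelling $\a'$). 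I expect the main obstacle to be exactly the bookkeeping of the transitions --- catching each (B)-index at the precise scale where its growth emerges, checking via Lemma~\ref{lem:pullup} that the coefficient it acquires is bounded below so it is genuinely category (A) and remains there, and verifying that the geometric flatness decay, hence the final $C^{1,\a}$ bound, survives the at most $N$ restarts with only universally controlled constants.
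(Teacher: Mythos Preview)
Your plan is essentially the paper's proof: iterate Theorem~\ref{thm:flatimp} while the (A)/(B) partition is stable, use Lemma~\ref{lem:pullup} to promote a (B)-index at the first scale where its smallness fails, observe there are at most $N$ such promotions, and conclude by the standard slab-to-graph argument at every boundary point.

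The one quantitative point your sketch understates (and which you rightly flag as the main obstacle) is that a transition degrades the flatness not by a fixed factor but by a \emph{square root}: Lemma~\ref{lem:pullup} produces error $C_D\e$ on the ball $B_{\sqrt{\e}}$, so after rescaling the new relative flatness is of order $\tfrac{C_D}{c_D}\sqrt{\e}$. Since this may happen up to $N$ times in succession, the running flatness must survive $N$ iterated square roots and still remain below $\e_F$, which forces $\e_R$ of order $(\e_I(c_D/C_D)^N)^{4\cdot 2^N}$ rather than the $\e_0^4$ your setup suggests. The paper encodes this via the invariant $\w_i\le\bigl(\e_I(c_D/C_D)^{|\cB_i|}\bigr)^{2^{|\cB_i|}}$, which records exactly how many further square roots the current flatness can absorb; with this in hand the induction closes and the final H\"older exponent is obtained by an inner induction on the number of transitions, each transition degrading the exponent by a fixed universal factor.
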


\begin{proof} We will show that there is some unit vector $\nu$ such that for $s<1/4$, $B_s \cap \p \W$ is contained in the region $|x\cdot \nu|\leq C s^{1+\a}$. This implies the conclusion of the theorem by standard arguments: applying to translates of $\W$ and letting $\nu_x$ be the normal vector corresponding to $x\in \p\W\cap B_{1/2}$, we see that there is a unique $x\in \p \W$ with a given orthogonal projection $x'\in \{x\cdot \nu_0=0\}$, and that $|\nu_x - \nu_y|\leq C |x-y|^\s$. 

To simplify statements, below we adopt the convention that when an inequality about the functions $u_k$ or their derivatives is satisfied on a ball $B_r$, we mean that it holds for any $x\in B_r\cap \W$.

Let $\r = \min\{\r_0,\r_D,\frac{1}{16}\}$, and select the corresponding $\e_D,\e_F$ from Theorem \ref{thm:flatimp} and Lemma \ref{lem:pullup}. Take the constants $c_D,C_D$ as in Lemma \ref{lem:pullup}, and assume $C_D/c_D\geq C'\geq 2$, with $C'$ to be chosen. Set $\e_I = \min\{\e_D,\e_F\}$, and $\e_R = \frac{1}{2}(\e_I c_D^N/C_D^N)^{4\cdot 2^N}\ll \e_I$. Set $\w_0= \1\frac{\e_I c_D^N}{C_D^N}\2^{2^N} = (2 \e_R)^{1/4}$.

From the flatness assumption, we know there is a unit vector $\nu_0$ and numbers $\{\a_k^0\}$ satisfying
\[
 \sum_{i=1}^N (\a_k^0)^2\xi_k = \xi_0(0)
\]
and $|u_k + \a_k^0 x\cdot \nu_0| \leq \e_R$ on $\W$.  We split the indices into two categories: either we have the pair of inequalities (up to switching signs of those $u_k$)  $|u_k + \a_k^0 x\cdot \nu_0|\leq \a_k^0 \w_0$ and $|\triangle u_k|\leq \a_k^0 \w_0^2$ (we say then that $k\in \cA_0$) or not (so $k\in \cB_0$). Note that if $|\a_k^0|\geq \frac{1}{2} \w_0^2$, then  $k\in \cA_0$, so in particular $\cA_0$ is nonempty. For the indices in $\cB_0$, we instead have that $|u_k|\leq |\a_k^0| + \e_R <\w_0^2/2 + \w_0^4/2<\w_0^2$. Set $m_0=0$.

In particular, we have that for each $k\in \cA_0$,
\[
 |u_k + \a_k^0 x\cdot \nu_0|\leq \a_k^0 \w_0\leq \a_k^0 \e_I
\]
and
\[
 |\triangle u_k|\leq \a_k^0 \w_0^2 \leq\a_k^0 \e_I^2,
\]
while for each $k\in \cB_0$,
\[
 |u_k|< \w_0^2 \leq \e_I^2
\]
and
\[
 |\triangle u_k|\leq \w_0^4 \leq \e_I^4.
\]
We will now iteratively construct a sequence of radii $r_i$, numbers $\a_k^i$, collections of indices $\cA_i,\cB_i$, integers $m_i$, and unit vectors $\nu_i$. Let us assume as an inductive hypothesis the following:
\begin{enumerate}
 \item We have $\cB_i\ss \cB_{i-1}$; if $\cB_i=\cB_{i-1}$ then $m_i=m_{i-1}$, and otherwise $m_i=m_{i-1}+1$.
 \item The numbers $\a_k^i$ satisfy
\[
 \sum_{k=1}^N\xi_k (\a_k^i)^2 = \xi_0 (0).
\]
 \item We have the following bounds on $B_{r_i}$ for $k\in \cA_i$:
\[
 |u_k + \a_k^i x\cdot \nu_i|\leq \a_k^i r_i \w_i
\]
and
\[
 |\triangle u_k| \leq \a_k^i \w_i^2/r_i
\]
 \item We have the following bounds on $B_{r_i}$ for $k\in \cB_i$:
\[
 |u_k| < \w_i^2 r_i
\]
and
\[
 |\triangle u_k|\leq \w_i^4/r_i.
\]
 \item If $m_i = m_{i-1}$, then $\w_i\leq \w_{i-1}/2$ and $r_i = \r r_{i-1}$. If not, then $\w_i = C_D/c_D \sqrt{\w_{i-1}}$ and $r_i = r_{i-1}\sqrt{\w_{i-1}}$. In either case, $\w_i \leq \e_I$.
 \item We have $|\nu_i - \nu_{i-1}|\leq C \w_i$.
 \item In addition, $\w_i \leq ( e_I (c_D/C_D)^{|\cB_i|})^{2^{|\cB_i|}}$.
\end{enumerate}
We will first show, by induction, that such a sequence may be constructed, and then explain how this implies the original claim.

Assume that the first $i$ elements of the sequence have been constructed. Then for every $k\in \cB_i$, check whether the inequality
\[
 |u_k|< \w_i^2/4 \r r_i
\]
holds on $B_{\r r_i}$. First consider the case when it does. Then we set $m_{i+1}=m_i$, $\cB_{i+1}=\cB_{i}$, $r_{i+1}=\r r_i$, and $\w_{i+1} = \w_i/2$ (this guarantees (1), (5), and (7)). Then apply Theorem \ref{thm:flatimp} to $(\W/r_i, \{u_K(r_i \cdot)/r_i\})$, using the fact that $\w_i \leq \e_I$. This gives a vector $\nu_{i+1}$ and numbers $\{a_k^{i+1}\}$ satisfying (6) and (2) respectively, and in addition $|\a_k^{i+1}-\a_k|\leq C|\a_k| \w_i$, for which 
\[
 |u_k + \a_k^{i+1} x\cdot \nu_{i+1}|\leq \a_k^{i+1} r_{i+1} \w_{i+1}
\]
on $B_{r_{i+1}}$. To check the other inequality in (3), note that
\[
 |\triangle u_k|\leq \a_k^i \w_i^2 /r_i \leq \a_k^i 4 \w_{i+1}^2/r_{i+1} \leq \a_k^{i+1} \w_{i+1}^2/r_{i+1},
\]
using simply that $\r\leq \frac{1}{16}$. We already have the first inequality of (4) by assumption, while for the second one,
\[
 |\triangle u_k|\leq  \w_i^4 /r_i \leq  \w_{i+1}^4/r_{i+1}.
\]
 
Now consider the case when for at least one $k\in \cB_i$, we have 
\[
 |u_k|\geq \w_i^2/4 \r r_i
\]
on $B_{\r r_i}$. Let $\cB_{i+1}$ contain all the $k\in \cB_i$ for which we still have
\[
 |u_k|< \w_i^2/4 \r r_i
\]
on $B_{\r r_i}$. Set $m_{i+1} = m_i +1$, $\r_{i+1} = \r_i \sqrt{\w_i}$, $\w_{i+1}=C_D/c_D \sqrt{\w_i}$, and $\nu_{i+1}=\nu_i$. This means (6) and (1) hold automatically. We also have (7):
\[
 \w_{i+1} \leq \frac{C_D}{c_D} ( e_I (c_D/C_D)^{|\cB_i|})^{2^{|\cB_i|-1}}\leq ( e_I (c_D/C_D)^{|\cB_i|-1})^{2^{|\cB_i|-1}}\leq ( e_I (c_D/C_D)^{|\cB_{i+1}|})^{2^{|\cB_{i+1}|}},
\]
and as this implies $\w_{i+1}\leq \e_I$, we also have (5). Note that from the first part of (3) for $i$, we have that $\p \W \cap B_{r_i}\ss \{|x\cdot \nu_i|\leq \w_i r_i\}$. Then we apply Lemma \ref{lem:pullup} to $(\W/r_i, \frac{u_k(r_i\cdot)}{\w_i^2 r_i})$ for each $k\in \cB_i \sm \cB_{i+1}$; this tells us that there is an $\a_k^{i+1}$ with $c_D\w_i^2 \leq \a_k^{i+1} \leq C_D \w_i^2$ such that on $B_{r_i+1}$, we have
\[
 |u_k + \a_k^{i+1} x \cdot \nu_i| \leq \frac{C_D}{c_D} \a_k^{i+1} r_i \w_i = \a_k^{i+1} r_{i+1} \w_{i+1}.
\]
In addition, we have
\[
 |\triangle u_k|\leq \w_i^4/r_i \leq \a_k^{i+1}/c_D \w_i^2 /r_i = \a_k^{i+1}/c_D (\frac{c_D}{C_D})^5 \w_{i+1}^{5} /r_{i+1}\leq \a_k^{i+1} \w_{i+1}^2/r_{i+1}.
\]
Hence (3) holds for these indices. To define $\a_k^{i+1}$ for the other indices $k$, we recall that for some $q\in \cA_i$, $\a_q^i,\xi_q\geq c(n)$. We set $\a_k^{i+1}=a_k^i$ for $k\neq q, \notin \cB_i\sm \cB_{i+1}$, and then select $\a_q^{i+1}$ so that (2) holds. As $|\a_k^{i+1} -\a_k^i|\leq C \w_i^2$ for all $k\neq q$, this is also true for $k=q$. Let us then check (3) for $k\in \cA_i$:
\begin{align*}
 |u_k + \a_k^{i+1} x \cdot \nu_i|&\leq \a_k^i r_i \w_i + C|\a_k^i - \a_k^{i+1}| \a_k^{i+1} r_{i+1} \\
&\leq 2\a_k^{i+1} \frac{c_D}{C_D} r_{i+1}\w_{i+1} + C (\frac{c_D}{C_D})^4 \w_{i+1}^4 \a_k^{i+1} r_{i+1}\\
&\leq \a_k^{i+1} r_{i+1}\w_{i+1}
\end{align*}
provided $C'$ is large enough. Note that the second term is $0$ unless $k=q$, in which case $\a_k^{i+1}\geq c(n)$. For the other inequality,
\[
 |\triangle u_k|\leq \a_k^i \w_i^2/r_i \leq 2 \a_k^{i+1} (\frac{c_D}{C_D})^5 \w_{i+1}^5 /r_{i+1}\leq \a_k^{i+1}\w_{i+1}^2/r_{i+1},
\]
which establishes (3). Finally, we check (4) for every $k\in \cB_{i+1}$:
\[
 |u_k|< \w_i^2 r_i = (\frac{c_d}{C_D})^3 \w_{i+1}^{3} r_{i+1}<\w_{i+1}^2 r_{i+1},
\]
while
\[
 |\triangle u_k|\leq \w_i^4/r_i \leq \w_{i+1}^4/r_{i+1}.
\]
This concludes the induction argument.

Finally, we show that this implies our original claim. Let $\w(r)$ be the smallest number for which $\p \W\cap B_r \ss \{|x \cdot \nu|\leq r \w(r)\}$ for $\nu=\nu_i$ where $r\in (r_i,r_{i+1})$. Then there is a trivial estimate $\w(s) \leq \frac{r}{s} \w(r)$ for $s\leq r$, and also we know that $\w(r_i)\leq \w_i$. Let $m = \lim_{i\rightarrow \8} m_i\leq N$; we argue by induction on $m$  that there are constants $C,\a$ (which may depend on $m$) such that $\w(r)\leq C(\w_0 r)^\a$. For $m=0$, this is standard. For $m\geq 1$, let $l$ be the first integer for which $m_l>m_{l-1}$. By induction, we then have
\[
 \w(r) \leq C (\w_l r/r_l)^\a.
\]
for $r\geq r_l$. Now, for $i<l$ we have $\w(\r^i)\leq \w_0 2^{-i}$, from which it follows that $\w(r)\leq \w_0 \r^{-1} r^{-\log 2/\log \r} = \w_0/\r r^{\a_0}$ for $r\leq r_{l-1}$. For $r\in [r_{l-1},r_l]$, we have (set $\b = \frac{1-\a_0/2}{1+\a_0/4}$)
\begin{align*}
 \w(r) &\leq \w(r_{l-1}) \frac{r_{l-1}}{r_{l}} \\
&= \w(r_{l-1}) (\frac{r_{l-1}}{r_{l}})^\b (\frac{r_{l-1}}{r_{l}})^{1-\b}\\
&= \w_{l-1}^{1-\b/2} (\frac{r_{l-1}}{r_{l}})^{1-\b}\\
&  \leq (\w_0/\r)^{1-\b/2} r_{l-1}^{\a_0(1-\b/2)}(\frac{r_{l-1}}{r_{l}})^{1-\b}\\
& \leq (\w_0/\r)^{1-\b/2} r^{\a_0 (1-\b/2)}.
\end{align*}
We thus have, for $r \leq r_l$, that $\w(r) \leq C (\w_0 r)^{\a_0(1-\b/2)}$, and that for $r\geq r_l$,
\[
 \w(r) \leq C (\w_l r/r_l)^\a \leq C (\w_0 r_l)^{\a \a_0 (1-\b/2)} (\frac{r}{r_l})^{\a \a_0 (1-\b/2)} = C(\w_0 r)^{\a \a_0 (1-\b/2)}. 
\]
It follows that
\[
 \w(r)\leq C(\w_0 r)^{\a \a_0 (1-\b/2)}
\]
for all $r$, completing the argument.

It now also follows that $|\nu_i - \nu_{i+1}|\leq C(\w_0 r_i)^\a$, and so $\nu_i\rightarrow \nu$, with $|\nu_i - \nu| \leq C(\w_0 r_i)^\a$. We conclude that
\[
\max\3 t: \p \W\cap B_r \ss \{|x \cdot \nu|\leq r t\}\4 \leq C(\w_0 r_i)^\a,
\]
which gives our original claim.
\end{proof}

\section{The Singular Set}\label{sec:sing}

In this section we discuss the size of $\p \W \sm \p^* \W$ for a minimizer (in the sense of Section \ref{sec:blowup}). As of this point, we know that $\p^* \W$ is a union of $C^{1,\a}$ graphs, and
\[
-\triangle u_k = (u_k)_\nu d\cH^{n-1}\mres_{ \p^*\W} + \l_k u_k,
\]
with the $u_k$ satisfying property E at each point in $\p^* \W$ :
\[
 \sum_{k=1}^N \xi_k (u_k)_\nu^2 = \xi_0.
\]
To avoid technicalities, we will assume $\xi_0$ constant here. The following proposition is based on a formula of Georg Weiss \cite{Weiss}.

\begin{proposition}Take $0\in \p \W$, and perform a blow-up $u_k(r_i \cdot)/r_i \rightarrow v_k$, $\W/r_i \rightarrow V$ as in Lemma \ref{lem:blowup}. Then $V$ is a cone, the $v_k$ are $1-$ homogeneous, and we have $v_k = \a_k v_1$ for all $k$.
\end{proposition}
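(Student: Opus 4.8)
The plan is to run a Weiss-type monotonicity formula at the point $0\in\p\W$. For $r$ small define
\[
 W(r)=\frac{1}{r^n}\int_{B_r}\Big(\sum_{k=1}^N\xi_k|\n u_k|^2+\xi_0\chi_\W\Big)-\frac{1}{r^{n+1}}\int_{\p B_r}\sum_{k=1}^N\xi_k u_k^2\,d\cH^{n-1}.
\]
The first step is to show that $W$ is \emph{almost monotone}, i.e.\ $W(r)+Cr$ is nondecreasing for a universal $C$, so that $W(0^+):=\lim_{r\searrow 0}W(r)$ exists. As usual one differentiates $W$ after rescaling to unit scale, and the derivative is controlled by two ingredients. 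The outer variation is simply the identity $\int_{B_r}\sum_k\xi_k|\n u_k|^2=\int_{\p B_r}\sum_k\xi_k u_k\,\p_\nu u_k+\sum_k\xi_k\l_k\int_{B_r}u_k^2$, which holds because $u_k=0$ $\cH^{n-1}$-a.e.\ on $\p\W$, so the singular part of $-\triangle u_k=(u_k)_\nu d\cH^{n-1}\mres\p\W+\l_k u_k$ does not contribute. The inner variation is the Pohozaev identity obtained from the domain-variation formula \eqref{eq:Edomvar} of Lemma \ref{lem:domvar} — available because every minimizer has property E by Lemma \ref{lem:Eforall} — applied with the vector field $\U(x)=x\,\eta(|x|)$ for a cutoff $\eta$ approximating $\chi_{B_r}$, using that $\xi_0$ is constant here. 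Relative to the classical one-phase computation the only extra terms are the eigenvalue contributions $\l_k\int_{B_r}u_k^2$, which are $O(r^2)$ since $|u_k|\le Cr$ on $B_r$; their $r$-derivative is $O(r)$, which is precisely the $Cr$ correction.

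Next I would pass to the blow-up. Along $r_i\searrow 0$ the rescalings $u_k^{r_i}:=u_k(r_i\,\cdot)/r_i$ converge to $v_k$; since $\triangle u_k^{r_i}=-r_i^2\l_k u_k^{r_i}\to 0$, the $v_k$ are harmonic on $V=\{v_1>0\}$, and the convergence is strong in $H^1_{\mathrm{loc}}$ — on compact subsets of $\{v_1>0\}$ by interior estimates, on the interior of $\{v_1=0\}$ by nondegeneracy, and globally because $|\p V|=0$ and $|\n u_k^{r_i}|\le Q$, so dominated convergence applies; similarly $\chi_{\W/r_i}\to\chi_V$ in $L^1_{\mathrm{loc}}$ exactly as in the proof of Lemma \ref{lem:connected}. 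Scaling then gives $W(r_i\rho)\to W_v(\rho)$ for every fixed $\rho>0$, where $W_v$ denotes the (eigenvalue-free) Weiss density of $(v_k,V)$; hence $W_v(\rho)\equiv W(0^+)$ is constant. Because $(v_k,V)$ still satisfies the free-boundary condition (Lemma \ref{lem:blowup}(2)), the monotonicity identity for $W_v$ is now \emph{exact}, and its rigidity case forces $x\cdot\n v_k=v_k$ for every $k$ (here one uses $\xi_k>0$ and $\sum_k\xi_k(x\cdot\n v_k-v_k)^2\ge 0$). Thus the $v_k$ are $1$-homogeneous, and consequently $V=\{v_1>0\}$ is a cone.

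It remains to get $v_k=\a_k v_1$. Writing $v_k(r\theta)=r\,g_k(\theta)$ on the link $\Sigma=V\cap S^{n-1}$, harmonicity on the cone $V$ together with $v_k=0$ on $\p V$ gives $-\triangle_{S^{n-1}}g_k=(n-1)g_k$ with $g_k\in H^1_0(\Sigma)$. Since $g_1>0$, the value $n-1$ is the first Dirichlet eigenvalue of $-\triangle_{S^{n-1}}$ on $\Sigma$; its eigenspace is one-dimensional provided $\Sigma$ is connected. Connectedness of $\Sigma$ follows since otherwise, restricting the nonnegative $1$-homogeneous harmonic function $v_1$ to two components and applying the Alt--Caffarelli--Friedman monotonicity formula exactly as in the proof of Lemma \ref{lem:connected}, the rigidity case would force $V$ to be a pair of complementary half-spaces, contradicting the density bound $\d\le|B_\rho\cap V|/|B_\rho|\le 1-\d$ inherited from Lemma \ref{lem:stronglb}. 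Hence each $g_k$ lies in the span of $g_1$, i.e.\ $v_k=\a_k v_1$.

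The main obstacle is the first step: because $F$ need not be differentiable with respect to domain variations, the inner-variation input cannot be taken from minimality of $\W$ but must be extracted from property E via Lemma \ref{lem:domvar}, and one has to check carefully that all the resulting error terms — the eigenvalue lower-order terms above, and, in the general (non-constant $\xi_0$) case, the contributions of the Lipschitz weights $f,g$ — are genuinely of lower order, so that the limit $W(0^+)$ exists and the blow-up density is constant.
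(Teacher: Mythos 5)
Your argument is correct and follows essentially the same route as the paper: an almost-monotone Weiss energy with an $O(r)$ error from the eigenvalue terms, constancy of the density along the blow-up forcing $1$-homogeneity, connectedness of the cone via the Alt--Caffarelli--Friedman formula against the density estimates of Lemma \ref{lem:stronglb}, and simplicity of the first spherical eigenvalue on the connected link giving $v_k=\a_k v_1$. The only cosmetic difference is that the paper obtains the Rellich/Pohozaev identity by applying the divergence theorem to the vector field $T=x(\xi_0+\sum_k\xi_k|\n u_k|^2)-2\sum_k\xi_k(\n u_k\cdot x)\n u_k$ and using $T\cdot\nu=0$ on $\p^*\W$, whereas you extract it from the domain-variation characterization \eqref{eq:Edomvar} of property E; these are equivalent, since Lemma \ref{lem:domvar} is itself proved by exactly such a representation.
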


\begin{proof} Let $\phi(r)$ be Weiss's energy
\[
\phi(r) = \frac{1}{r^n} \int_{B_r\cap \W} \1\sum_{k=1}^N \xi_k |\n u_k|^2 + \xi_0\2  - \frac{1}{r^{n+1}}\int_{\p B_r} \sum_{k=1}^N \xi_k u_k^2.
\]
As the $u_k$ are Lipschitz, this is an absolutely continuous function. We will check that
\[
\phi'(r) \geq - C r + \frac{2}{r^{n+2}}\int_{\p B_r} \sum_{k=1}^N \xi_k (u_k - r(u_k)_r)^2. 
\]
Let us first see, however, how this would imply the conclusion. We have that $\lim_{r\rightarrow 0} \phi(r)=\phi_0$ exists (as $\phi + C r^2/2$ is monotone). It follows that
\[
\lim_{i}\phi(r r_i) - \phi(s r_i) = 0
\]
for any $s<r$, and so
\[
\lim_i \int_{s r_i}^{r r_i}  \frac{2}{t^{n+2}}\int_{\p B_t} \sum_{k=1}^N \xi_k (u_k - t(u_k)_r)^2 dt = 0.
\]
On the other hand, if we rescale (and set $v_k^i(x)=u_k(r_i x)/r_i$), this gives
\[
\lim \int_{s}^r \frac{2}{t^{n+2}}\int_{\p B_t}\sum_{k=1}^N \xi_k (v_k^i - t(v_k^i)_r)^2 dt = 0.
\]
Using the dominated convergence theorem, this gives
\[
\int_{s}^r \frac{2}{t^{n+2}} \int_{\p B_t}\sum_{k=1}^N \xi_k (v_k - t (v_k)_r)^2 dt = 0.
\]
This is true for any $s<r$, so $t(v_k)_r =  v_k$; this implies that the $v_k$ are homogeneous of degree one and that $V$ is a cone. In fact, $V$ must be a connected cone: were it disconnected, an application of the Alt-Caffarelli-Friedman monotonicity formula to $v_1$ would give that $v_1 = \a_1 |x\cdot \nu|$ for some unit vector $\nu$, and in particular $|B_r\cap V|=|V|$. This contradicts the density estimates from Lemma \ref{lem:stronglb}. To see that each $v_k$ is a multiple of $v_1$, note that $v_1$ restricted to the sphere $\p B_1$ is a positive eigenfunction of the Laplace-Beltrami operator on the connected region $V\cap \p B_1$. Thus $v_1$ is the first eigenfunction, and its corresponding eigenvalue is simple. Each of the $v_k$ solves the same eigenvalue problem as $v_1$, however, with the same eigenvalue (the eigenvalue depends only on the degree of homogeneity, see \cite[Chapter 12]{CS}). Thus we must have $v_k = \a_k v_1$ (it is possible that $\a_k=0$).

It remains to check the monotonicity formula. For this, let $T$ be the vector field
\[
T = x (\xi_0 + \sum_k \xi_k|\n u_k|^2)  - 2 \sum_k \xi_k \n u_k \cdot x \n u_k.
\]
This is a bounded vector field on $\W$, which is smooth on the interior. In particular, we have
\[
\dvg T = n\xi_0 + \sum_k \xi_k \1(n-2) |\n u_k|^2 + 2\l_k u_k \n u_k \cdot x\2. 
\]
Near each point of $\p^* \W$, we have that
\[
T\cdot \nu = 2 x\cdot \nu \xi_0 - 2 x \cdot \nu \sum_k \xi_k |\n u_k|^2 = 0.
\]
Applying the divergence theorem (see \cite{CTZ}) to $T$ on $B_r \cap \W$, we recover a Rellich identity:
\[
\int_{B_r} n\xi_0 + \sum_k \xi_k [(n-2) |\n u_k|^2 + 2\l_k u_k \n u_k \cdot x] = r \int_{\p B_r} \xi_0 + \sum_k \xi_k [|\n u_k|^2 - 2 ((u_k)_r)^2]. 
\]

Now we estimate the derivative of $\phi$. We have
\begin{align*}
\phi'(r) &= \frac{1}{r^n}\int_{\p B_r} (\xi_0 + \sum_k \xi_k |\n u_k|^2) - \frac{n}{r^{n+1}} \int_{B_r} (\xi_0 + \sum_k \xi_k |\n u_k|^2)\\
& \qquad - \sum_k \xi_k [\frac{1}{r^{n+2}} \int_{\p B_r}  2 r u_k (u_k)_r -  2 u_k^2]\\
& = \sum_k \xi_k[\frac{1}{r^n}\int_{\p B_r}  ((u_k)_r)^2 - \frac{2}{r^{n+1}} \int_{B_r} |\n u_k|^2 -\l_k u_k \n u_k \cdot x]\\
& \qquad - \sum_k \xi_k [\frac{1}{r^{n+2}} \int_{\p B_r}  2 r u_k (u_k)_r -  2 u_k^2] \\
& \geq \sum_k \xi_k[\frac{1}{r^{n+1}}\int_{B_r} -2\l_k u_k \n u_k \cdot x + \frac{2}{r^{n+2}} \int_{\p B_r}  (r (u_k)_r -   u_k)^2].
\end{align*}
We differentiated $\phi$, substituted in our identity, and finally integrated by parts and completed the square (dropping the favorable error term, hence the inequality). To conclude, simply note that $|u_k|\leq Cr$, and so the first term is at least $-Cr$.
\end{proof}

In the theorem below, an Alt-Caffarelli minimizer is a local minimizer on $B_1$ of the functional
\[
\int |\n u|^2 + |\{u>0\}|
\]
over nonnegative $H^1$ functions. We say that a Lipschitz function is stationary for the Alt-Caffarelli problem if $u$ is harmonic when positive, $\{u>0\}$ is a set of finite perimeter satisfying density estimates as in Lemma \ref{lem:stronglb}, $u$ has the property that $\max_{B_r(x)}u \geq c r$ for any $x\in \p \{u>0\}$, and $u_\nu = -1$ on $\p \W$ in the distributional sense. 

\begin{theorem}\label{thm:sing} Let $\W$ be a minimizer. Then for $n_*=2$, we have that:
\begin{enumerate}
	\item If $n\leq n_*$, then $\p \W = \p^* \W$.
	\item If $n= n_* +1$, then $\p \W\sm \p^*\W$ consists of at most finitely many points.
	\item For any $n$, $\cH^{n-n_*-1 +\t}(\p \W \sm \p^*\W)=0$ for any $\t>0$.
\end{enumerate}
If in addition $\W$ enjoys property S, then we may instead take $n_*$ the highest dimension for which Alt-Caffarelli minimizers are known to have no singular points (this is at least $4$, from \cite{JS}). 
\end{theorem}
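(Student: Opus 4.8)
The plan is to run the classical Federer--Almgren dimension reduction, using the Weiss monotonicity formula of the preceding Proposition to match blow-ups of $\W$ at boundary points with one-phase Alt--Caffarelli cones, and then to invoke the known regularity theory for the latter. The first point is that $x\in\p\W$ lies in $\p^*\W$ exactly when some blow-up of $\W$ at $x$ is a half-plane configuration $\{\a_k(y\cdot\nu)^-\}$: the implication ``$\Leftarrow$'' is Theorem \ref{thm:main}, since a single flat blow-up makes $(\W/r,\{u_k(r\cdot)/r\})$ flat for $r$ small and hence forces $\p\W\cap B_{cr}(x)$ to be a $C^{1,\a}$ graph; the implication ``$\Rightarrow$'' is Lemma \ref{lem:blowupreg}, since at a reduced-boundary point the blow-up of the set contains or is contained in a half-space. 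Thus, with $\Sigma=\p\W\sm\p^*\W$ (which is closed, as $\p^*\W$ is relatively open), a point $x$ lies in $\Sigma$ precisely when no blow-up of $\W$ at $x$ is a half-plane.

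Next I would identify the blow-ups. By the Weiss Proposition, any blow-up $\{v_k\}$ at a point $x\in\p\W$ has the form $v_k=\a_k v_1$ with $v_1\ge0$ one-homogeneous, harmonic on the connected cone $V=\{v_1>0\}$, and vanishing on $\p V$. Property E passes to blow-ups by Lemma \ref{lem:blowup}(2), so $\big(\sum_k\xi_k\a_k^2\big)(v_1)_\nu^2=\xi_0(x)$ on $\p V$; since $\xi_0>0$ this makes $(v_1)_\nu$ a negative constant, and after rescaling by a positive factor we obtain a nonnegative one-homogeneous $w$, harmonic on $V$, with $w_\nu=-1$ on $\p V$ distributionally, and inheriting in the limit the nondegeneracy of Lemma \ref{lem:lowerbd} and the two-sided density estimates of Lemma \ref{lem:stronglb} (the latter passing to the limit via the $L^1$ convergence of the indicator functions, as in Lemma \ref{lem:connected}). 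Thus $w$ is a \emph{stationary} solution of the Alt--Caffarelli problem in the sense defined before the theorem, and it is a cone. If $\W$ has property S, Lemma \ref{lem:blowup}(3) says $\{v_k\}$ minimizes $F_*$; testing against competitors $\{\a_k u\}$ and using $\sum_k\xi_k\a_k^2=\xi_0(x)$ shows that in this case $w$ is in fact an Alt--Caffarelli \emph{minimizer}. In either case $\{v_k\}$ is a half-plane configuration iff $\p V$ is a hyperplane through the origin, i.e.\ iff $0$ is a regular point of $w$; hence $\Sigma$ is exactly the set of $x\in\p\W$ whose associated blow-up cone $w$ is singular at the origin.

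For the dimension reduction, let $n_*$ be the largest integer such that every one-homogeneous stationary one-phase Alt--Caffarelli solution in dimension $m\le n_*$ is a half-plane. One may take $n_*=2$: in $\R^1$ this is trivial, and in $\R^2$ the Weiss Proposition exhibits $V$ as a connected sector whose first Dirichlet eigenvalue must equal $n-1=1$, while a circular arc of opening $\theta$ has first eigenvalue $(\pi/\theta)^2$, forcing $\theta=\pi$ and $w=(y\cdot\nu)^-$. If $\W$ has property S one may instead take $n_*$ to be the largest dimension in which Alt--Caffarelli \emph{minimizers} have empty singular set, which is at least $4$ by \cite{JS}. The class of stationary (resp.\ minimizing) Alt--Caffarelli cones is closed under rescaling, under blow-ups at free-boundary points, and under locally uniform limits---the Lipschitz bound, nondegeneracy and density estimates all surviving as in Section \ref{sec:general}---and the singular set of a cone is again a cone; together with the base case ``no singular cones in dimension $\le n_*$'', the standard Federer argument shows that the singular set of any such solution in $\R^n$ has Hausdorff dimension at most $n-n_*-1$. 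In particular $\dim_\cH\Sigma\le n-n_*-1$, which is (3). If $n\le n_*$ this forces $\Sigma=\emptyset$, which is (1). If $n=n_*+1$ it gives $\dim_\cH\Sigma\le0$, and in fact $\Sigma$ is discrete: an accumulation point would blow up to an Alt--Caffarelli cone singular at $0$ and at some $e\in\p B_1$, hence, by homogeneity, singular along the whole ray $\R_{\ge0}e$, so with singular set of dimension $\ge1>n-n_*-1$; as $\p\W$ is bounded with finitely many components (Corollary \ref{cor:diam}), a closed discrete subset of it is finite, which is (2).

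The main obstacle is Step 2: verifying that the blow-up cone genuinely belongs to the stationary (resp.\ minimizing) Alt--Caffarelli class in exactly the form the cited regularity results require---above all that the free boundary condition $w_\nu=-1$ holds distributionally with the right normalization and that nondegeneracy and both density bounds survive the blow-up. Granting this, the rest is largely bookkeeping: one blow-up already leaves the class of $F$-minimizers, so the induction must be carried out entirely within the Alt--Caffarelli class, which is why it matters that this class is closed under iterated blow-ups and rescalings---standard, but worth stating carefully.
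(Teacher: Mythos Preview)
Your proposal is correct and follows essentially the same approach as the paper: identify blow-ups via the Weiss Proposition as homogeneous with $v_k=\a_k v_1$, use Property E (resp.\ Property S with Lemma \ref{lem:blowup}(3)) to recognize a multiple of $v_1$ as a stationary (resp.\ minimizing) Alt--Caffarelli cone, classify such cones in low dimension, and run Federer dimension reduction. You are simply more explicit than the paper at several points---the characterization of $\p^*\W$ via half-plane blow-ups, the sector/eigenvalue argument for $n_*=2$, the reduction from vectorial $F_*$-minimality to scalar Alt--Caffarelli minimality via proportional competitors, and the finiteness in dimension $n_*+1$---where the paper just cites \cite{AC} and \cite{Weiss}.
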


\begin{proof}
	We have, at each blowup, that $v_k = \a_k v_1$ are homogeneous. Then Property E implies that $\sum_k \xi_k \a_k (v_1)_\nu^2 = \xi_0$. Together with the previously established properties, we have that a multiple of $v_1$ is stationary for the Alt-Caffarelli problem. It was shown in \cite{AC} that in dimension 2, the only such solution is of the form $(x\cdot \nu)^-$ for some $\nu$ (indeed, this is easy to check for homogeneous solutions). The result then follows from Federer's dimension reduction argument (see \cite{ Weiss} for details).
	
	If $\W$ satisfies property S, then we also have the minimality property of Lemma \ref{lem:blowup}, which implies that a multiple of $v_1$ is a local minimizer of the Alt-Caffarelli problem. These are of the form $(x\cdot \nu)^-$ up to dimension $n_*$, and we again refer to the dimension reduction argument as explained in \cite{Weiss}.
\end{proof}

\section{Appendix: Higher Regularity}

In this section, we sketch the argument to show that the reduced boundary in the situations of Theorems \ref{thm:shapes} and \ref{thm:fb} is not only locally given by a $C^{1,\a}$ graph, but also that this is the graph of an analytic function. The arguments given here are not, in our opinion, original, but rather an application of the technique of Kinderlehrer, Nirenberg, and Spruck \cite{KNS}. However, there has recently been some doubt expressed in the literature (see \cite{MTV,DFS5}, where alternative approaches are proposed) about whether or not this method applies to showing higher regularity of, specifically, $C^{1,a}$ boundaries in Bernoulli-type problems. Indeed, while the two-phase problem is one of the main examples given in \cite{KNS}, it is assumed there that the boundary is $C^{2,\a}$ to start with. In some other examples, such as that of three minimal surfaces touching along a curve, only a $C^{1,\a}$ boundary is assumed originally, but there the entire transformed system can be put in divergence, or conormal, form; this is not the case for Bernoulli problems.

Below we argue that the method of \cite{KNS} is sufficient to imply analyticity directly, without any extra steps before performing the hodograph transform. As an aide, we will use a Schauder estimate for nondivergence-form elliptic systems, presented now. We make no effort to pursue the greatest possible generality, but will briefly comment on some extensions at the end.

Below, $H = \{(y',y_n)\in \R^n: y_n \geq 0\}$ is a half-space and $Q_r^+ = \{(y',y_n)\in H: |y'|< r, y_n<r\} $ is a relatively open cylinder. We say a function is in $C^{k,\a}_{\text{loc}}(E)$ if it lies in $C^{k,\a}(K)$ for all $K$ compactly contained in $E$.

\begin{lemma}\label{lem:schauder} Fix $\a\in(0,1)$. Let $v: H \rightarrow \R^N$ be a function in $C^{1,\a}(H)\cap C^{\8}_{\text{loc}}(H)$, with
\[
 \sup_{x\in H} (1+|x|)^{n-1} |v| + \sup_{x\in H} (1+|x|)^n |\n v| + \sup_{x,y\in H} (1+\min\{|x|,|y|\})^{n+\a} \frac{|\n v(x)-\n v(y)|}{|x-y|^\a}  < \8.
\]
 Assume that $v$ satisfies the system
\[
 \sum_{i,j=1}^n \sum_{k=1}^N A^{kl}_{ij} \p_{y_i}\p_{y_j} v^k = \dvg f^l
\]
at each point of $H \sm \p H$; here the $A_{kl}^{ij}$ are the coefficients of a constant rank-four tensor, and $f^l$ are vector fields in $C^\8_{\text{loc}}(H)$. Moreover, assume that on $\p H$, $v$ satisfies the boundary conditions
\[
 \begin{cases}
  v^k = 0 & k=1,\ldots,N-1\\
  \sum_{i=1}^{n}\sum_{k=1}^N b_i^k \p_{y_i}v^k = g
 \end{cases}
\]
for each $x\in \p H$; here $b_j^k$ is a constant matrix, while $g$ is a continuous function of compact support. Assume that $A_{ij}^{k,l},b_j^k$ satisfy (A-D) below. Then if
\[
M:=\|g\|_{C^{0,\a}} + \sup_{x\in H} (1+|x|)^n |f| + \sup_{x,y\in H} (1+\min\{|x|,|y|\})^{n+\a} \frac{|f(x)-f(y)|}{|x-y|^\a}  < \8,
\]
we have that
\[
\|\n v\|_{C^{0,\a}(H)} \leq C M.
\]
\end{lemma}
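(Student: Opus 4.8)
The plan is to derive the estimate from the constant-coefficient theory by a scale-invariant local Schauder estimate combined with a weighted covering argument, and then to remove the a~priori dependence on $v$ itself by a Liouville-type compactness argument. Because the tensor $A_{ij}^{kl}$ and the matrix $b_j^k$ are constant, there is no coefficient-freezing step; the only structural input is the Legendre--Hadamard ellipticity together with the \emph{complementing} (Lopatinski--Shapiro) condition for the mixed Dirichlet--oblique boundary operator, which is precisely what hypotheses (A)--(D) are arranged to provide.

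\textbf{Step 1 (local estimates at unit scale).} First I would establish, on an interior ball $B_1\subset H\sm\p H$, the estimate $\|\n v\|_{C^{0,\a}(B_{1/2})}\le C(\|v\|_{L^\infty(B_1)}+\|f\|_{L^\infty(B_1)}+[f]_{C^{0,\a}(B_1)})$, the standard Schauder estimate for the system $\sum A^{kl}_{ij}\p_i\p_j v^k=\dvg f^l$ (a divergence-form forcing gains exactly one derivative, so $f\in C^{0,\a}$ gives $\n v\in C^{0,\a}$). Then on the half-cylinder $Q_1^+$ I would prove the analogous boundary estimate, with $\|g\|_{C^{0,\a}}$ added to the right-hand side, for the mixed problem $v^k=0$ ($k\le N-1$), $\sum b_i^k\p_{y_i}v^k=g$ on $\p H$. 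Both follow by the usual compactness/blow-up scheme: were either to fail, one would produce sequences $v_m,f_m,g_m$ with $[\n v_m]_{C^{0,\a}}=1$ on the smaller set while the data and $\|v_m\|_{L^\infty}$ tend to $0$; after subtracting affine functions and passing to a locally $C^1$ limit one obtains a nonaffine solution of the homogeneous model problem (in $H$ near the boundary, in $\R^n$ in the interior case) of at most linear growth, which is excluded by the complementing condition. This is the one place where the algebraic content of (A)--(D) is genuinely used.

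\textbf{Step 2 (weighted rescaling and covering) and Step 3 (Liouville closing).} For $x_0\in H$ put $r_0=\tfrac14(1+|x_0|)$ and apply Step~1 rescaled to the ball, or half-cylinder if $\dist(x_0,\p H)<r_0$, of radius $r_0$ about $x_0$. Using the weighted hypotheses on $f$, which give $\|f\|_{L^\infty(B_{r_0}(x_0))}\le Cr_0^{-n}M$ and $[f]_{C^{0,\a}(B_{r_0}(x_0))}\le Cr_0^{-n-\a}M$ (the compactly supported $g$ contributing only for bounded $|x_0|$, where its weight is harmless), together with $\|v\|_{L^\infty(B_{r_0}(x_0))}\le Cr_0^{1-n}\|(1+|x|)^{n-1}v\|_{L^\infty(H)}$, the rescaled estimate yields $[\n v]_{C^{0,\a}(B_{r_0/2}(x_0))}\le C(1+|x_0|)^{-n-\a}\big(\|(1+|x|)^{n-1}v\|_{L^\infty(H)}+M\big)$ and $\|\n v\|_{L^\infty(B_{r_0/2}(x_0))}\le C(1+|x_0|)^{-n}\big(\|(1+|x|)^{n-1}v\|_{L^\infty(H)}+M\big)$. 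Since the exponents are negative, covering $H$ by such sets and splitting the quotient $|\n v(x)-\n v(y)|/|x-y|^\a$ into the cases $x,y$ in a common ball and $|x-y|\gtrsim 1+\min(|x|,|y|)$ gives the global bound $\|\n v\|_{C^{0,\a}(H)}\le C\big(\|(1+|x|)^{n-1}v\|_{L^\infty(H)}+M\big)$. To replace the $v$-term by $M$, suppose $\|(1+|x|)^{n-1}v\|_{L^\infty(H)}\le CM$ failed: then there are $v_m$ (with data satisfying $M_m\to0$) obeying all hypotheses with $\|(1+|x|)^{n-1}v_m\|_{L^\infty}=1$; by the inequality just proved the $v_m$ are bounded in $C^{1,\a}_{\mathrm{loc}}(H)$, a subsequence converges in $C^1_{\mathrm{loc}}$ to a solution $v_\infty$ of the homogeneous problem, the uniform decay $|v_m(x)|\le(1+|x|)^{1-n}$ prevents the weighted supremum from escaping to infinity so $\|(1+|x|)^{n-1}v_\infty\|_{L^\infty}\ge c>0$, yet $v_\infty\to0$ at infinity forces $v_\infty\equiv0$ by the uniqueness of the decaying homogeneous mixed boundary value problem (again from (A)--(D), or via the model Poisson kernel) --- a contradiction. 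Combining, $\|\n v\|_{C^{0,\a}(H)}\le CM$.

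The hard part will be the boundary portion of Step~1: verifying that the mixed Dirichlet--oblique operator satisfies the complementing condition relative to the system, and the associated Liouville and uniqueness statements for the constant-coefficient model in the half-space. Once those are secured --- which is exactly the purpose of hypotheses (A)--(D) --- the interior estimate, the rescaling bookkeeping with the weights, the covering, and the compactness argument are all routine.
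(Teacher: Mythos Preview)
Your overall strategy is quite different from the paper's. The paper does not use compactness or blow-up at all. Instead it first exploits the triangular hypothesis (A) to absorb the right-hand side by solving a cascade of \emph{scalar} Dirichlet problems on $H$: starting with $u^N$ and working downward, each equation reduces (after a linear change of variables) to the Laplacian, is solved by convolution with $\nabla\Psi$ after odd reflection, and the weighted $C^{1,\alpha}$ bounds follow from interior estimates on balls $B_{(1+|x|)/2}(x)$. For the remainder $w=v-u$, which solves the homogeneous system with the mixed boundary data, the paper writes down the Agmon--Douglis--Nirenberg Poisson-kernel representation together with the explicit kernel decay rates from \cite{ADN2}, and then invokes the ADN uniqueness theorem (for solutions of at most logarithmic growth) to identify $w$ with that Poisson integral. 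Your route is more in the modern spirit and would extend more easily beyond triangular systems, but it still leans on the same ADN Liouville/uniqueness input, only packaged as a compactness statement.

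There is, however, a genuine gap in your Step~3. The assertion that ``the uniform decay $|v_m(x)|\le(1+|x|)^{1-n}$ prevents the weighted supremum from escaping to infinity'' is simply not justified: the bound $(1+|x|)^{n-1}|v_m(x)|\le 1$ does nothing to keep the near-maximizers $x_m$ of this product in a fixed compact set, so the $C^1_{\mathrm{loc}}$ limit $v_\infty$ may be identically zero while $\|(1+|x|)^{n-1}v_m\|_{L^\infty}=1$ for every $m$, and no contradiction results. Repairing this by recentring and rescaling at $x_m$ is not entirely routine either, because the weight $(1+|x|)^{n-1}$ is not translation-invariant: after dilating by $R_m=1+|x_m|$ about the origin the rescaled functions satisfy $|\tilde v_m(y)|\le R_m^{n-1}(1+R_m|y|)^{1-n}$, which is bounded for $|y|$ bounded away from $0$ but blows up near $y=0$, so one is forced to work on $H\setminus\{0\}$ and then rule out fundamental-solution-type limits; alternatively, integrating the weighted gradient bound back to a weighted bound on $v$ only yields an inequality $\|(1+|x|)^{n-1}v\|_\infty\le C(\|(1+|x|)^{n-1}v\|_\infty+M)$ with an uncontrolled constant $C$. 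This is exactly the difficulty the paper's explicit-construction approach sidesteps: by building a solution $u+\omega$ with the correct data and the desired weighted estimate directly, uniqueness forces $v=u+\omega$ and the estimate follows without any absorption.
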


Here are the assumptions on system that the lemma calls for:

\begin{enumerate}[(A)]
 \item The tensor $A_{ij}^{kl}$ is triangular: $A_{ij}^{kl}=0$ whenever $k<l$.
 \item The system is uniformly elliptic: there is a number $\L>0$ such that for any $\xi\in \R^n$, the tensor $A_{ij}^{kl}$ satisfies
\[
 \L |\xi|^{2N} \leq |\det \{ \sum_{i,j=1}^N A_{ij}^{kl} \xi_i\xi_j\}_{kl}| \leq \L^{-1} |\xi|^{2N}.
\]
Assuming (A), this is equivalent to
\[
 \L'|\xi|^2 \leq \sum_{i,j=1}^N A_{ij}^{kk} \xi_i\xi_j \leq \L'^{-1} |\xi|^2
\]
for every $k$.
  \item The system, together with the boundary condition, is coercive, meaning the Complementing Condition of \cite{ADN2} is satisfied. If (A), (B) hold, then this is equivalent to $b_n^N\neq 0$ (i.e. that this is an oblique derivative condition).
  \item $A_{ij}^{kl} = A_{ji}^{kl}$; this is without loss of generality.
\end{enumerate}

\begin{proof} We separate the argument into two steps: in the first step, we reduce the problem to a homogeneous one, with $f^k=0$, by solving auxiliary scalar problems. Then in the second step, we will use a representation formula from \cite{ADN2} to close the argument.

First, take a vector field $f\in C^{\8}_{\text{loc}}(H)$ and with
\[
 M_0(f) := \sup_{x\in H} (1+|x|)^n |f| + \sup_{x,y\in H} (1+\min\{|x|,|y|\})^{n+\a} \frac{|f(x)-f(y)|}{|x-y|^\a}  < \8,
\]
and consider a scalar problem
\[
 \begin{cases}
  \sum_{i,j=1}^n A_{ij} \p_{y_i}\p_{y_j} v = \dvg f & \text{ on } H\sm \p H \\
  v = 0 & \text{ on } \p H \\
  |v|\rightarrow 0 & \text{ as } |x|\rightarrow \8. 
 \end{cases}
\]
This problem admits a solution, which may be described as follows: after a suitable linear change of variables $x = L y$ preserving the hyperplane $\p H$, the problem is equivalent to solving
\[
 \begin{cases}
  \triangle \tilde{v} = \dvg \tilde{f} & \text{ on } H\sm \p H\\
  |\tilde{v}| \rightarrow 0 & |x|\rightarrow \8\\
  \tilde{v} = 0 & \text{ on } \p H.
 \end{cases}
\]
Here $\tilde{v}(x) = v(L x)$. We may extend $\tilde{v}$ by an odd reflection about $\p H$, and $\tilde{f}$ by an even reflection (note that this preserves the $C^{0,\a}$ norm) to a solution of
\[
 \begin{cases}
  \triangle \tilde{v} = \dvg \tilde{f} & \text{ on } \R^n\\
  |\tilde{v}| \rightarrow 0 & |x|\rightarrow \8.
 \end{cases}
\]
 A solution to this problem is given by convolution of $f$ with the derivative of the fundamental solution $\Psi(x)$:
\[
 \tilde{v}(x) = \int \n \Psi(x-y)\cdot  \tilde{f}(y) dy.
\]
This is well-defined (using the decay assumption of $f$), and because $|\n \Psi(x)| \leq C |x|^{1-n}$ we also have $|\tilde{v}| \leq CM_0(f) (1 + |x|)^{1-n}$ . From \cite[Theorem 4.15]{GT}, we have the estimate (with $R = (|x|+1)/2$)
\begin{align*}
 R \sup_{B_R(x)} &|\n \tilde{v}| + R^{1+\a} [\n \tilde{v}]_{C^{0,\a}(B_R(x))}   \\
&\leq C[ \sup_{B_R(x)}|\tilde{v}| + R \sup_{B_{R}(x)}|\tilde{f}| + R^{1+\a} [\tilde{f}]_{C^{0,\a}(B_R(x))}]\\
& \leq C M_0(f) R^{1-n}.
\end{align*}
In particular, this gives
\[
 M_0(\n \tilde{v}) \leq C M_0 (f).
\]
Changing back to the original variables, we have found a solution to our scalar problem, and shown that it has the estimate
\[
 M_0(\n v) + \sup_H (1+|x|)^{1-n}|v| \leq C M_0(f).
\]
Note also that this $v\in C^\8_{\text{loc}}(H)$.

Next, let us use this to find a solution $u$ to the elliptic system (with $f^l$ now as in the statement of the lemma)
\[
  \begin{cases}
  \sum_{i,j=1}^n \sum_{k=1}^N A^{kl}_{ij} \p_{y_i}\p_{y_j} u^k = \dvg f^l & \text{ on } H\sm \p H\\
  u^k = 0 & \text{ on } H \sm \p H\\
  |u^k|\rightarrow 0 & \text{ as } |x|\rightarrow 0.
  \end{cases}
\]
Indeed, from (A) we have that this system is triangular, so the equation for $u^N$ is decoupled and we may solve it first, using the argument above. From the assumption on $f$,
\[
 M_0(f^l) \leq M,
\]
and from our estimate
\[
 M_0(\n u^N) \leq C M_0(f^N) \leq C M.
\]
Now the $l=N-1$ equation takes the form
\[
 \sum_{i,j}A^{N-1,N-1}_{ij} \p_{y_i}\p_{y_j} u^{l-1} = \sum_{i} \p_{y_i} [ f^{N-1}_i - \sum_j (A_{ij}^{N,N-1} \p_{y_j} u^N)] := \dvg f^{N-1}_*.
\]
As
\[
 M_0(f^{N-1}_*) \leq C[M_0(f^{N-1}) + M (\n u^N)] \leq C M,
\]
we may again solve a scalar equation to obtain $u^{N-1}$, which will have $M_0(u^{N-1}) \leq C M$. We continue inductively in this manner until we have found all of the $u^k$.

Set $w = v - u$. We know that $w\in C^\8_{\text{ loc }}(H)\cap C^{1,\a}(H)$, satisfies the bound $|w| \leq C (1 + |x|)^{1-n}$, and solves the system
\[
 \begin{cases}
 \sum_{i,j=1}^n \sum_{k=1}^N A^{kl}_{ij} \p_{y_i}\p_{y_j} w^k = 0 & \text{ on } H\sm \p H\\
  w^k = 0 & \text{ on } \p H \text{ for } k=1,\ldots,N-1\\
  \sum_{i=1}^{n}\sum_{k=1}^N b_i^k \p_{y_i}w^k = \tilde{g} & \text{ on } \p H. 
 \end{cases} 
\]
Here 
\[
\tilde{g} = g - \sum_{i=1}^{n}\sum_{k=1}^N b_i^k \p_{y_i}u^k_i,
\]
and we have $ M_0(\tilde{g}) \leq C M$ (abusing notation and using the same definition of $M_0(\cdot)$ for functions defined on $\p H$). We also have, from the assumption on $v$, that 
\begin{equation}\label{eq:apriori}
M_0(\n w) + \sup_H \frac{|w|}{(1+|x|)^{n-1}} <\8.
\end{equation}

We now enter into the second part of the proof, which is based on obtaining a representation formula for $w$. First of all, from \cite[Theorem 4.1, Corollary 4.1]{ADN2} for any smooth and compactly supported functions $\phi^l$, the elliptic system
\begin{equation}\label{eq:auxsystem}
 \begin{cases}
 \sum_{i,j=1}^n \sum_{k=1}^N A^{kl}_{ij} \p_{y_i}\p_{y_j} \w^k = 0 & \text{ on } H\sm \p H\\
  \w^k = \phi^k & \text{ on } \p H \text{ for } k=1,\ldots,N-1\\
  \sum_{i=1}^{n}\sum_{k=1}^N b_i^k \p_{y_i}\w^k = \phi^N & \text{ on } \p H 
 \end{cases} 
\end{equation}
admits a solution given by
\begin{equation}\label{eq:rep}
 \w^k (x',x_n) = \sum_{l=1}^N \int_{\p H}  K^{kl} (x' - y',x_n) \phi^l(y')dy',
\end{equation}
Where the ``Poisson kernels'' $K^{kl}$ enjoy the estimates
\begin{equation}\label{eq:ADNest}
\begin{cases} 
|D^s K^{kl}(x)| \leq C |x|^{1-n-s} & l=1,\ldots, N-1\\
|D^s K^{kN}(x)| \leq C |x|^{2-n-s}(1 + |\log|x||) &
\end{cases}
\end{equation}
with the logarithm omitted and kernel homogeneous unless $n=2$ and $s=0$. Our first step is to demonstrate that for any such $\phi$, the corresponding $\w$ admits the estimate
\begin{equation}\label{eq:repest}
 \sup_{x\in H} \frac{|\w(x)|}{|x|^{2-n}(1 + |\log(1+|x|)|)} + \|\n \w \|_{C^{0,\a}(H)}  \leq C [ M_0(\phi^N) + \sum_{l=1}^{N-1} M_0(\n \phi^l) + \sup_H \frac{|\phi^l|}{(1+|x|)^{n-1}}] .
\end{equation}
The logarithm may be omitted unless $n=2$. The first term is clear from combining \eqref{eq:rep} and \eqref{eq:ADNest} with $s=0$. The second estimate follows from \cite[Theorem 3.4]{ADN1} after writing 
\[
 \n \w^k(x',x_n) = \sum_{l=1}^N \int_{\p H}  \n K^{kl} (x' - y',x_n) \phi^k(y')dy',
\]
this is explained in \cite[Section 5]{ADN2}.

Now a simple approximation argument implies that the representation formula \eqref{eq:rep} and the estimate \eqref{eq:repest} remain valid for any $\phi^l$ for which the right-hand side of \eqref{eq:repest} is finite, even if they lack compact support.

Now fix $t>0$ and take $\phi^k(x') = w^k(x',t)$ for $k<N$, and $\phi^N(x') = \sum_{i=1}^{n}\sum_{k=1}^N b_i^k \p_{y_i}w^k(x',t)$. Using \eqref{eq:apriori}, we see that the right-hand side of \eqref{eq:repest} is finite for these $\phi$, and hence the representation formula produces functions $\w^k$ satisfying the equation \eqref{eq:auxsystem} which admit estimate \eqref{eq:repest}. Consider the difference $\z(x',s) = w(x',t+s) - \w(x',s)$: this is a solution to
\[
 \begin{cases}
 \sum_{i,j=1}^n \sum_{k=1}^N A^{kl}_{ij} \p_{y_i}\p_{y_j} \z^k = 0 & \text{ on } H\sm \p H\\
  \z^k = 0 & \text{ on } \p H \text{ for } k=1,\ldots,N-1\\
  \sum_{i=1}^{n}\sum_{k=1}^N b_i^k \p_{y_i}\z^k = 0 & \text{ on } \p H. 
 \end{cases} 
\]
We also know that (combining the estimates on $\w$ and $w$) that $|\z| \leq C(1+|\log|x||)$, and that $\z\in C^{\8}(H)$. Applying \cite[Theorem 8.3]{ADN2}, we see that $\z=0$, and hence
\[
\|\n w \|_{C^{0,\a}(\{(x',s):s\geq t\})}  \leq C [ M_0(\sum_{i=1}^{n}\sum_{k=1}^N b_i^k \p_{y_i}w^k(\cdot,t)) + \sum_{l=1}^{N-1} M_0(\n_{\R^{n-1}} w^l(\cdot,t)) + \sup_H \frac{|w(\cdot,t)^l|}{(1+|x|)^{n-1}}] . 
\]
Finally, send $t\rightarrow 0$: all of the terms on the right other than the first go to $0$, while the first term converges to $M_0(\tilde{g})\leq CM$. Hence we have shown that
\[
\|\n v\|_{C^{0,\a}(H)} \leq \|\n u\|_{C^{0,\a}(H)}+  \|\n w \|_{C^{0,\a}(H)}  \leq C M,
\]
as desired.
\end{proof}

The following is an elementary corollary, which localizes the estimate to domains and applies it to problems with variable coefficients.

\begin{corollary}\label{cor:schauder2} Fix $\a\in(0,1)$. Then there is a $\d>0$ such that the following holds: Let $v: Q_2^+ \rightarrow \R^N$ be a function in $C^{1,\a}(Q_2^+)\cap C^{\8}_{\text{loc}}(Q_2^+)$.
 Assume that $v$ satisfies the system
\[
 \sum_{i,j=1}^n \sum_{k=1}^N \p_{y_i} A^{kl}_{ij}(x) \p_{y_j} v^k = \dvg f^l
\]
at each point of $Q_2^+ \sm \p H$, and that on $\p H$, $v$ satisfies the boundary conditions
\[
 \begin{cases}
  v^k = 0 & k=1,\ldots,N-1\\
  \sum_{i=1}^{n}\sum_{k=1}^N b_i^k(x) \p_{y_i}v^k = g
 \end{cases}
\]
for each $x\in Q_2^+\cap \p H$; here $g,f$ are as in Lemma \ref{lem:schauder}, $b_j^k(x'),A_{ij}^{kl}(x)$ are as in that lemma for each $x\in Q_2^+$ or $x'\in Q_2^+\cap \p H$ and in addition $A\in C^\8_{\text{loc}}(Q_2^+)$, as well as
\[
 [A_{ij}^{kl}]_{C^{0,\a}(Q_2^+)} + [b_j^k]_{C^{0,\a}(Q_2^+\cap \p H)} \leq \d.
\]
Then
\[
\|\n v\|_{C^{0,\a}(Q_1^+)} \leq C [\|v\|_{C^{0,\a}(Q_2^+)} + \|f\|_{C^{0,\a}(Q_2^+)} + \|g\|_{C^{0,\a}(Q_2^+\cap \p H)}] .
\]
\end{corollary}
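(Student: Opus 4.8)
The plan is to deduce this from the half-space estimate of Lemma~\ref{lem:schauder} by the classical Schauder localization scheme: freeze the coefficients at a boundary point, multiply $v$ by a cutoff, push the resulting commutator and frozen-coefficient errors into the inhomogeneous data, apply Lemma~\ref{lem:schauder} to the cut-off function, and close by an interpolation--iteration argument. Away from $Q_2^+\cap\p H$ this is just the standard interior estimate for constant-coefficient elliptic systems (after the same freezing), so I only discuss the estimate near the base $\p H$. First I would fix $x_0\in Q_2^+\cap\p H$ and translate so that $x_0=0$. Subtracting the frozen operator, $v$ solves the constant-coefficient system $\sum_{i,j,k}A^{kl}_{ij}(0)\,\p_{y_i}\p_{y_j}v^k=\dvg\tilde f^l$ with $\tilde f^l=f^l-(A(\cdot)-A(0))\n v$ (schematically), and the oblique condition becomes $\sum_{i,k}b_i^k(0)\p_{y_i}v^k=\tilde g$ with $\tilde g=g-(b(\cdot)-b(0))\n v$. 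Since $(A,b)$ satisfy (A)--(D) pointwise, so does $(A(0),b(0))$, and for $\d$ small the ellipticity and obliqueness constants stay uniform, so Lemma~\ref{lem:schauder} applies to the frozen system; moreover $\|A(\cdot)-A(0)\|_{C^{0,\a}(Q_2^+)}\le C\d$ (and likewise for $b$) gives $\|\tilde f\|_{C^{0,\a}(Q_2^+)}\le\|f\|_{C^{0,\a}}+C\d\|\n v\|_{C^{0,\a}(Q_2^+)}$ and the analogous bound for $\tilde g$.

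Next, for $1\le\r<\s\le2$ I would choose $\chi\in C^\infty_c(Q_\s^+)$ with $\chi\equiv1$ on $Q_\r^+$, $|D^m\chi|\le C(\s-\r)^{-m}$, and $\chi$ independent of $y_n$ near $\p H$ (so that the cutoff preserves both the Dirichlet components $w^k=\chi v^k=0$ for $k<N$ and the oblique component of the boundary condition, up to a lower-order term involving only the trace of $v^N$). Then $w:=\chi v$ solves a constant-coefficient system whose right-hand side is $\chi\,\dvg\tilde f$ plus first-order commutators (of the schematic forms $(\n\chi)(A(\cdot)-A(0))\n v$ and $(\n\chi)\tilde f$) plus zeroth-order terms (products of $v$ with derivatives of $\chi$); writing the zeroth-order terms as divergences of their one-variable antiderivatives, the entire right-hand side takes the form $\dvg F^l$ with $F^l$ compactly supported and
\[
\|F\|_{C^{0,\a}(Q_2^+)}+\|g^*\|_{C^{0,\a}}\le C(\s-\r)^{-2-\a}\big(\|f\|_{C^{0,\a}(Q_2^+)}+\|g\|_{C^{0,\a}}+\|v\|_{C^{0,\a}(Q_2^+)}\big)+C\d(\s-\r)^{-1-\a}\|\n v\|_{C^{0,\a}(Q_\s^+)},
\]
where $g^*$ is the cut-off boundary datum. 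As $w$ has compact support in $Q_2^+$, the weighted norms in Lemma~\ref{lem:schauder} are comparable to ordinary $C^{0,\a}$ norms, and the lemma gives $\|\n v\|_{C^{0,\a}(Q_\r^+)}\le\|\n w\|_{C^{0,\a}(H)}\le C\,(\text{right-hand side above})$.

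The final step is the standard interior-weighted-norm bootstrap, as in \cite[Ch.~6]{GT}: with $\d$ fixed small, one recasts the last inequality in terms of the weighted H\"older norms with weight $\dist(\cdot,\p Q_2^+\setminus\p H)$, uses the interpolation inequality for those norms to absorb the term $C\d\|\n v\|_{C^{0,\a}(Q_\s^+)}$ into the left-hand side, and iterates over the concentric cylinders $Q_\r^+$, $1\le\r\le2$, arriving at
\[
\|\n v\|_{C^{0,\a}(Q_1^+)}\le C\big(\|v\|_{C^{0,\a}(Q_2^+)}+\|f\|_{C^{0,\a}(Q_2^+)}+\|g\|_{C^{0,\a}(Q_2^+\cap\p H)}\big),
\]
which is the assertion.

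I expect this closing iteration to be the main obstacle. The frozen-coefficient and commutator terms unavoidably produce the factor $\d\|\n v\|_{C^{0,\a}}$ on a cylinder strictly larger than $Q_\r^+$, and since $v$ is only assumed to lie in $C^{1,\a}(Q_2^+)$ with no quantitative bound there, this term can be removed only through the weighted-norm interpolation machinery and not by a naive dyadic iteration (whose absorption constant would degenerate on shrinking annuli). A secondary, purely bookkeeping, point is the choice of $\chi$ (flat in $y_n$ near $\p H$) needed to keep the oblique boundary condition intact under the cutoff; with that choice the extra boundary term is harmless because it only involves the trace of $v^N$.
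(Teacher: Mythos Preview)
Your approach is correct, but the paper's proof is strictly simpler, and the simplification removes precisely what you flagged as the ``main obstacle.'' You freeze the coefficients first and then cut off, so the frozen--coefficient error appears as $(A(\cdot)-A(0))\n v$; after multiplying by the cutoff this produces a term $C\d\|\n v\|_{C^{0,\a}(Q_\s^+)}$ on a strictly larger cylinder, forcing you into the weighted--norm interpolation/iteration machinery. The paper reverses the order: it sets $w=\eta v$ \emph{first}, computes the variable--coefficient equation satisfied by $w$, and only then freezes the principal part at $0$. The coefficient error then reads $\p_{y_i}\big[(A_{ij}^{kl}(0)-A_{ij}^{kl}(x))\p_{y_j}w^k\big]$, i.e.\ it involves $\n w$ rather than $\n v$. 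Since $w$ is already globally defined on $H$ with compact support, Lemma~\ref{lem:schauder} gives $\|\n w\|_{C^{0,\a}(H)}\le C_0 M$ with $M$ containing the term $\d\|\n w\|_{C^{0,\a}(H)}$, and choosing $\d<1/(2C_0)$ absorbs it in one line---no concentric cylinders, no weighted norms, no iteration. The same happens for the boundary condition: the error is $(b(0)-b(x))\n w$, again in terms of $w$.

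Two further minor contrasts: the paper does not bother to make the cutoff flat in $y_n$ near $\p H$; the extra boundary term $v^k b_i^k(x)\p_{y_i}\eta$ is simply thrown into $\tilde g$ and bounded by $\|v\|_{C^{0,\a}}$. And rather than writing zeroth--order terms as divergences of one--variable antiderivatives, the paper extends the non--divergence part $\tilde h^l$ to a compactly supported mean--zero function on $\R^n$ and represents it as $\dvg\n\z^l$ via the Newtonian potential, which automatically yields the decay needed in Lemma~\ref{lem:schauder}.
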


\begin{proof}
 Fix $\eta: H \rightarrow [0,1]$ a smooth cutoff which is $1$ on $Q_1^+$ and which is compactly supported on $Q_2^+$. Then a direct computation shows that $w = \eta v$ solves the following system on all of $H\sm \p H$:
\begin{align*}
 \sum_{i,j,k}& A^{kl}_{ij}(0) \p_{y_i}\p_{y_j} w^k = \dvg(\eta f^l) - \n \eta f^l \\
& + \sum_{i,j,k}\p_{y_i}[2 A_{ij}^{kl}(x) (\p_{y_j}\eta)v^k + (A_{ij}^{kl}(0)-A_{ij}^{kl}(x))\p_{y_j}w^k]  - v^k (\p_{y_i} A_{ij}^{kl}(x)\p_{y_j}\eta) \\
&= \tilde{h}^l + \dvg \tilde{f}^l.
\end{align*}
If we set
\[
 M = \|v\|_{C^{0,\a}(Q_2^+)} + \|f\|_{C^{0,\a}(Q_2^+)} + \|h\|_{C^{0,\a}(Q_2^+)}+\|g\|_{C^{0,\a}(Q_2^+\cap \p H)} + \d \|\n w\|_{C^{0,\a}}, 
\]
then both $\tilde{h}$ and $\tilde{f}$ are in $C^{0,\a}(H)$ and supported on $Q_2^+$, with norm controlled by $M$. The inhomogeneous boundary condition becomes
\[
 \sum_{i,k} b_i^k(0) \p_{y_i}w^k = \eta g + \sum_{i,k}[ v^k b_i^k(x) \p_{y_i}\eta + (b_i^k(0)-b_i^k(x))\p_iw^k ]  = \tilde{g},
\]
also with $\|\tilde{g}\|_{C^{0,\a}}\leq CM$.

Now extend $\tilde{h}^l$ to a compactly supported function on $\R^n$ with mean $0$ and $\|\tilde{h}\|_{C^{0,\a}}\leq CM$, and write
\[
 \z^l(x) = \int \Phi(x-y) \tilde{h}^l(y)dy,
\]
where $\Phi$ is the fundamental solution to the Laplace equation. This function satisfies
\[
 |\z^l(x)| \leq C M (1+|x|)^{1-n}, \qquad |\n \z^l| \leq C M (1+|x|)^{-n} \qquad |D^2 \z^l| \leq C M (1+|x|)^{-n-1};
\]
note the improved decay because of the mean-zero property. In particular, we may rewrite $\dvg \tilde{f}^l + \tilde{h}^l = \dvg \bar{f}^l$, where $\bar{f}^l = \tilde{f}^l + \n \z^l$ and has the decay property 
\[
 \sup_{x\in H} (1+|x|)^n |\bar{f}| + \sup_{x,y\in H} (1+\min\{|x|,|y|\})^{n+\a} \frac{|\bar{f}(x)-\bar{f}(y)|}{|x-y|^\a}\leq CM.
\]
Applying Lemma \ref{lem:schauder}, we obtain
\[
 \|\n w\|_{C^{0,\a}(H)} \leq C_0 M.
\]
Provided we now choose $\d < C_0/2$ and reabsorb the last term in $M$, this gives
\[
 \|\n w\|_{C^{0,\a}(H)} \leq 2C_0 [\|v\|_{C^{0,\a}(Q_2^+)} + \|f\|_{C^{0,\a}(Q_2^+)} + \|h\|_{C^{0,\a}(Q_2^+)}+\|g\|_{C^{0,\a}(Q_2^+\cap \p H)}],
\]
and implies the conclusion.
\end{proof}

Equipped with this estimate, we may now prove a Schauder-type theorem for an elliptic system which will be relevant to us shortly.

\begin{lemma}\label{lem:schauder3} Let $v: Q_3^+ \rightarrow \R^N$ be in $C^\8_{\text{loc}}(Q_3^+)\cap C^{1,\a}(Q_3^+)$, and solve the following quasilinear elliptic system:
\[
 \begin{cases}
  \sum_{i=1}^n \p_{y_i} [A_{i}^l (\n v)] = f^l & \text{ on } Q_3^+\sm \p H \\
  v^k = 0 & k=1,\ldots,N-1,  \text{ on } Q_3^+\cap \p H\\
  B(\n v) = 0 & \text{ on } Q_3^+ \cap \p H.
 \end{cases}
\]
Assume that $A,B$ are $C^{2}$ with $ \p_{p_j^k} A_i^l = 0$ if $k<l$, $ \L |\xi|^2 \leq \sum_{i,j} \p_{p_j^k} A_i^l \xi_j \xi_k \leq \L^{-1} |\xi|^2$, and $|\p_{p_n^N} B| \geq \L>0$. Assume also that $f^l\in C^{0,\a}(Q_3^+)$. Then $v \in C^{2,\a}(Q_1^+)$, with estimate
\[
 \|v\|_{C^{2,\a}(Q_1^+)}\leq C (\|v\|_{C^{1,\a}(Q_3^+)}, \|f\|_{C^{0,\a}(Q_3^+)})
\]
\end{lemma}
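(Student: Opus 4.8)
The plan is to run the standard bootstrap: differentiate the system in directions tangent to $\p H$, apply the linear Schauder estimate of Corollary \ref{cor:schauder2} to the resulting divergence‑form system, and then recover the missing pure normal second derivative directly from the equation.

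First I would localize and rescale. Since $v\in C^{1,\a}(Q_3^+)$, the gradient $\n v$ takes values in a fixed compact set $\cK\ss\R^{nN}$ on which $A,B$ are $C^2$, so $x\mapsto \p_{p_j^k}A_i^l(\n v(x))$ and $x'\mapsto \p_{p_i^k}B(\n v(x',0))$ lie in $C^{0,\a}$ with H\"older seminorm controlled by $\|A\|_{C^2(\cK)},\|B\|_{C^2(\cK)}$ and $\|\n v\|_{C^{0,\a}}$; in particular these coefficients have oscillation at most $\e$ on any cube of radius $\r(\e)$. Choosing $\r$ so that this oscillation is below the $\d$ of Corollary \ref{cor:schauder2} and rescaling $Q_\r^+$ to $Q_2^+$, the coefficient smallness hypothesis of that corollary holds. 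Then for $m\in\{1,\dots,n-1\}$ I set $w=\p_{y_m}v$; differentiating $\sum_i\p_{y_i}A_i^l(\n v)=f^l$ in the $y_m$ direction (legitimate in the open half‑cube, where $v$ is smooth) gives
\[
 \sum_{i,j=1}^n\sum_{k=1}^N\p_{y_i}\big[\,\p_{p_j^k}A_i^l(\n v(x))\,\p_{y_j}w^k\,\big]=\dvg(f^l e_m),
\]
which is exactly the divergence‑form system of Corollary \ref{cor:schauder2} with $A_{ij}^{kl}(x)=\p_{p_j^k}A_i^l(\n v(x))$ and $\tilde f^l=f^l e_m\in C^{0,\a}$. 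The structural hypotheses are inherited: $A_{ij}^{kl}=0$ for $k<l$ since $\p_{p_j^k}A_i^l=0$ there, the diagonal blocks are elliptic by assumption, and symmetry in $i,j$ may be imposed. Differentiating the boundary conditions tangentially, $w^k=0$ on $Q_2^+\cap\p H$ for $k<N$, while $B(\n v)=0$ gives $\sum_{i,k}\p_{p_i^k}B(\n v(x',0))\,\p_{y_i}w^k=0$ on $Q_2^+\cap\p H$, an oblique (complementing) condition because $|\p_{p_n^N}B|\geq\L>0$, with $C^{0,\a}$ coefficients. Corollary \ref{cor:schauder2} then yields $\|\n w\|_{C^{0,\a}(Q_1^+)}\leq C\,(\|v\|_{C^{1,\a}(Q_3^+)},\|f\|_{C^{0,\a}(Q_3^+)})$, i.e. every second derivative $\p_{y_i}\p_{y_j}v$ with $\min\{i,j\}<n$ lies in $C^{0,\a}(Q_1^+)$ with the desired bound.

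To control $\p_{y_n}^2 v$ I would then use the equation itself. Expanding, $\sum_{i,j,k}\p_{p_j^k}A_i^l(\n v)\,\p_{y_i}\p_{y_j}v^k=f^l$ in $Q_1^+$, and separating the $i=j=n$ terms gives
\[
 \sum_{k=1}^N \p_{p_n^k}A_n^l(\n v)\,\p_{y_n}^2v^k \;=\; f^l \;-\!\!\sum_{(i,j)\neq(n,n)}\sum_k \p_{p_j^k}A_i^l(\n v)\,\p_{y_i}\p_{y_j}v^k .
\]
The matrix $\{\p_{p_n^k}A_n^l\}_{k,l}$ is triangular with diagonal entries $\geq\L$ (apply the ellipticity inequality with $\xi=e_n$), hence invertible with $C^{0,\a}$ inverse; since the right‑hand side is in $C^{0,\a}(Q_1^+)$ by the previous step, so is $\p_{y_n}^2 v$, and we get $v\in C^{2,\a}(Q_1^+)$ with the stated estimate. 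Undoing the rescaling and covering the original $Q_1^+$ by finitely many such cylinders finishes the argument.

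The one place where I expect genuine care is the justification of the tangential differentiation up to $\p H$, since a priori $v$ is only known to be smooth in the \emph{open} half‑cube. I would handle it by the usual difference‑quotient device: the tangential quotients $\Delta_{h,m}v=(v(\cdot+he_m)-v(\cdot))/h$ satisfy the difference‑quotient analogue of the system above, with coefficients $\int_0^1\p_{p_j^k}A_i^l\big(\n v+s(\n v(\cdot+he_m)-\n v)\big)\,ds$ — still $C^{0,\a}$ with small oscillation and the triangular/oblique structure — and right‑hand side $\dvg(f^l e_m)$, uniformly for small $h$. Applying the estimate just derived to $\Delta_{h,m}v$ on slightly shrunk cylinders yields a uniform $C^{1,\a}$ bound, and letting $h\to 0$ identifies the limit as $\p_{y_m}v$ and legitimizes every step above. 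Everything else is the algebraic bookkeeping of checking that the structural conditions (A)–(D) of Lemma \ref{lem:schauder} pass to the linearized system.
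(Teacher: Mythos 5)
Your proposal is correct and follows essentially the same route as the paper: tangential difference quotients with the averaged coefficients $\int_0^1\p_{p_j^k}A_i^l(t\n v(\cdot+he)+(1-t)\n v(\cdot))\,dt$, a rescaling to bring the coefficient oscillation below the $\d$ of Corollary \ref{cor:schauder2}, and recovery of the pure normal second derivatives from the triangular nondivergence form of the equation. The only point to tidy is that the right-hand side of the quotient system is $\Delta_{h,m}f^l$, not literally $\dvg(f^le_m)$; one must put it in divergence form with a uniformly $C^{0,\a}$ field (the paper writes $f^l=\dvg z^l$ with $z^l\in C^{1,\a}$ beforehand; equivalently one may use $\dvg\bigl(e_m\,h^{-1}\int_0^hf^l(\cdot+se_m)\,ds\bigr)$), which is exactly the fix your phrasing implicitly assumes.
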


\begin{proof} First of all, we may write $f^l = \dvg z^l$ for some vector field $z^l$ with $\|z^l\|_{C^{1,\a}(Q_3^+)} \leq C \|f^l\|_{C^{0,\a}(Q_3^+)}$; this may be done by, for example, solving an appropriate boundary value problem for $-\triangle q^l = f^l$, and setting $z^l = \n q^l$.

Fix $e$ a unit vector in $\p H$, and consider the incremental quotients
\[
 v_h(x) = \frac{v(x+he)-v(x)}{h},
\]
for $|h|<\frac{1}{2}$. Unless otherwise indicated, all constants below will be independent of $h$. These $v_h$ are in $C^{1,\a}(Q_2^+)$, have
\[
 \|v_h\|_{C^{0,\a}(Q_2^+)} \leq C \|v\|_{C^{1,\a}(Q_3^+},
\]
and satisfy the following system:
\[
 \begin{cases}
  \sum_{i,j,k} \p_{y_i} [ a_{ij}^{kl} \p_{y_j} v_h^k ] = \dvg z_h^l & \text{ on } Q_2^+\sm \p H \\
  v^k_h = 0 & k=1,\ldots,N-1,  \text{ on } Q_2^+\cap \p H\\
  \sum_{j,k} b_j^k \p_{y_j} v_h^k = 0 & \text{ on } Q_2^+ \cap \p H.
 \end{cases}
\]
Here $a_{ij}^{kl}$ is given by
\[
a_{ij}^{kl}(x) = \int_0^1 \p_{p_j^k} A^l_i (t\n v(x+eh) + (1-t)\n v(x)) dt,
\]
and
\[
 b_j^k(x') = \int_0^1 \p_{p_j^k} B(t \n v(x+he)+(1-t)\n v(x) )dt,
\]
\[
 z_h^l(x) = \frac{z(x+eh)-z(x)}{h}.
\]
We see that
\[
 \|a_{ij}^kl\|_{C^{0,\a}(Q_2^+)} + \|b_j^k\|_{C^{0,\a}(Q_2^+)} + \|z_h\|_{C^{0,\a}(Q_2^+)} \leq C [\|v\|_{C^{1,\a}(Q_3^+)} + \|f\|_{C^{0,\a}(Q_3^+)}].
\]

Fix $r$ small, and consider the dilated function $v_{h,r,x}(y) = v_h(x+ry)$ for any $x\in Q_1^+\cap \p H$. This satisfies an equation  
\[
 \begin{cases}
  \sum_{i,j,k} \p_{y_i} [ a_{ij,r,x}^{kl} \p_{y_j} v_{h,r,x}^k ] = \dvg z_{h,r,x}^l & \text{ on } Q_2^+\sm \p H \\
  v^k_{h,r,x} = 0 & k=1,\ldots,N-1,  \text{ on } Q_2^+\cap \p H\\
  \sum_{j,k} b_{j,r,x}^k \p_{y_j} v_h^k = 0 & \text{ on } Q_2^+ \cap \p H.
 \end{cases}
\]
$a_{r,x},b_{r,x},$ and $z_{r,x}$ having the same assumptions, but in addition
\[
 [a_{ij,r,x}^kl]_{C^{0,\a}(Q_2^+)} + [b_{j,r,x}^k]_{C^{0,\a}(Q_2^+)}\leq Cr^\a [\|v\|_{C^{1,\a}(Q_3^+)} + \|f\|_{C^{0,\a}(Q_3^+)}].
\]
Choosing $r$ so small as to make the right-hand side less than the $\d$ in Corollary \ref{cor:schauder2}, we obtain the estimate
\[
 \|v_{h,r,x}\|_{C^{1,\a}(Q_1^+)} \leq C [\|v\|_{C^{1,\a}(Q_3^+)} + \|f\|_{C^{0,\a}(Q_3^+)}].
\]
On the other hand, for any $x\in Q_1^+$ with $|x_n|\geq r/2$, we may instead apply standard variational Schauder interior estimates to this problem (for example, inductively applying the scalar estimates in \cite{GT} first to $v^N$, then to $v^{N-1}$, and so on works) on a ball of radius $r/4$. Scaling both sets of estimates back, we learn that
\[
 \|v_h\|_{C^{1,\a}(Q_1^+)} \leq C(\|v\|_{C^{1,\a}(Q_3^+)},\|f\|_{C^{0,\a}(Q_3^+)}).
\]

This implies that the derivatives $\p_{x_j} \p_{x_i} v^k$, with $j\neq n$, have
\[
 \|\p_{x_j} \p_{x_i} v^k\|_{C^{0,\a}} \leq C(\|v\|_{C^{1,\a}(Q_3^+)},\|f\|_{C^{0,\a}(Q_3^+)}).
\]
On the other hand, we may write (with the help of the equation with $l=N$)
\[
 \sum_{i,j}\p_{p_j^N}A_i^N (\n v) \p_{y_i}\p_{y_j}v^N = f^N,
\]
with the coefficient $|\p_n^N A_n^N|\geq \L$ from the uniform ellipticity, and all coefficients H\"older-$\a$. It follows that 
\[
 \|\p_{x_n} \p_{x_n} v^N\|_{C^{0,\a}} \leq C(\|v\|_{C^{1,\a}(Q_3^+)},\|f\|_{C^{0,\a}(Q_3^+)}).
\]
Proceeding inductively for $k=N-1$, then $k=N-2$, until $k=1$, we see that all of the second derivatives of $v$ enjoy this estimate. This establishes the conclusion. 
\end{proof}

Finally, we are in a position to derive higher regularity to the free boundary problems of interest to us.

\begin{theorem}\label{thm:higherreg} Let $\W$ be as in Theorem \ref{thm:shapes}. Then the $C^{1,\a}$ graphs in the conclusion of the theorem are analytic. Let $\W$ be as in Theorem \ref{thm:fb}. Then $\p \W\cap B_{!/2}$ is analytic.
\end{theorem}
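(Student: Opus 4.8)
The plan is to run the classical Kinderlehrer--Nirenberg--Spruck bootstrap, but using the nondivergence Schauder estimate of Lemma \ref{lem:schauder3} to pass directly from a $C^{1,\a}$ free boundary to a $C^{2,\a}$ one, with no intermediate step, and then iterate. By Theorems \ref{thm:shapes}, \ref{thm:fb} and \ref{thm:main} it suffices to work near a point $x_0\in\p^*\W$ at which $\p\W$ is a $C^{1,\a}$ graph and all the $u_k$ are harmonic in $\W$ (in the setting of Theorem \ref{thm:fb}) or eigenfunctions, $-\triangle u_k=\l_k u_k$ (in the setting of Theorem \ref{thm:shapes}), vanish on $\p\W$, and satisfy property E, $\sum_k\xi_k(u_k)_\nu^2=\xi_0$. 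Since $\p\W$ is $C^{1,\a}$ near $x_0$ and the $u_k$ have zero boundary data, boundary Schauder estimates give $u_k\in C^{1,\a}(\overline{\W\cap B_r(x_0)})$. At $x_0$ the blow-up of $\{u_k\}$ is $\{\a_k(x\cdot\nu)^-\}$ with $\sum_k\xi_k\a_k^2=\xi_0>0$, so some $\a_k\neq 0$; after relabelling and possibly flipping a sign we may take $\a_1>0$, so that $u_1>0$ in $\W$ near $x_0$ with $|\n u_1|\geq c>0$ there, by the Hopf lemma and $C^{1,\a}$ regularity.

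Next I would perform the partial hodograph transform straightening $u_1$: in coordinates with $\nu=e_n$, set $y'=x'$, $y_n=u_1(x)$, which maps $\W\cap B_r(x_0)$ $C^{1,\a}$-diffeomorphically onto a half-cylinder $Q_\r^+$ with $\p\W$ sent to $\{y_n=0\}$ (the map is a $C^{1,\a}$ diffeomorphism precisely because $|\n u_1|\geq c$). Writing the inverse as $x_n=\psi(y)$ and $v_k(y)=u_k(x(y))$ for $k=2,\ldots,N$, one computes that $\psi$ solves a quasilinear uniformly elliptic equation $\p_{y_i}[A_i(\n\psi)]=f$ coming from $\triangle u_1=0$ (resp. $=-\l_1 y_n$), each $v_k$ solves a linear elliptic equation whose coefficients depend on $\n\psi$ and which comes from $\triangle u_k=0$ (resp. $=-\l_k v_k$), and on $\{y_n=0\}$ one has the $N-1$ Dirichlet conditions $v_k=0$ together with the transformed free boundary condition $B(\n\psi,\n v_2,\ldots,\n v_N)=0$, the image of $\sum_k\xi_k(u_k)_\nu^2=\xi_0$. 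Ordering the unknown as $(v_2,\ldots,v_N,\psi)$, the equation for $v_k$ involves only $v_k$ and $\psi$, so the system is triangular in the sense of Lemma \ref{lem:schauder3}; uniform ellipticity holds with constants controlled by $\|\psi\|_{C^{1,\a}}$ and $c$; and the boundary operator $B$ is oblique with respect to $\p_{y_n}\psi$ because its $\xi_1(u_1)_\nu^2$ term contributes a nonvanishing $\p_{\p_{y_n}\psi}B$ --- this is exactly where $\xi_1>0$ and $\a_1\neq 0$ are used. Since $(\psi,v_2,\ldots,v_N)\in C^{1,\a}$ and the right-hand sides lie in $C^{0,\a}$ (for the eigenvalue case, $-\l_k v_k$ resp. $-\l_1 y_n$ composed with a $C^{1,\a}$ map), Lemma \ref{lem:schauder3} gives $(\psi,v_2,\ldots,v_N)\in C^{2,\a}_{\text{loc}}$; in particular $\p\W$ is $C^{2,\a}$ near $x_0$.

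From here the bootstrap is routine: with $C^{1,\a}$ coefficients, applying the same Schauder estimate to tangentially differentiated equations and then using the equations themselves to recover the $\p_{y_n}^2$ derivatives (as in the proof of Lemma \ref{lem:schauder3}) upgrades $(\psi,v_k)$ from $C^{m,\a}$ to $C^{m+1,\a}$, so the system is $C^\infty$ and $\p\W$, together with the $u_k$, is smooth up to the boundary near $x_0$. For analyticity I would invoke the analytic regularity theory for quasilinear elliptic systems with analytic coefficients (Morrey; see also the analyticity part of \cite{KNS}): the nonlinearities $A_i$, the coefficients of the $v_k$-equations, and $B$ are real-analytic functions of $\n\psi$ and $\n v_k$, being rational expressions built from the Jacobian of the hodograph map composed with the polynomial Laplacian, while the lower-order terms $-\l_k v_k$ and the datum $\xi_0$ are analytic (for the minimizer of the original functional $\xi_0\equiv 1$; more generally it suffices that $\xi_0$ be analytic, and one may reduce to this case as in Section \ref{sec:sing}). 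Hence $\psi$, and with it $\p\W$ and each $u_k$, is analytic near $x_0$. The main obstacle is the verification that the hodograph-transformed system fits the hypotheses of Lemma \ref{lem:schauder3} --- the triangular structure, and above all the obliqueness of the transformed free boundary condition, which is exactly where nondegeneracy of some $(u_k)_\nu$ (i.e.\ assumption (II) together with property E, $\xi_0$ bounded below) is essential; once this is in place the remainder is a mechanical iteration and a citation.
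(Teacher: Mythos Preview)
Your proposal is correct and follows essentially the same route as the paper: choose a $u_k$ with nonvanishing normal derivative, perform the partial hodograph transform based on it, verify that the resulting quasilinear system for $(\psi,v_2,\ldots,v_N)$ is triangular, uniformly elliptic, and has an oblique boundary condition so that Lemma~\ref{lem:schauder3} applies to yield $C^{2,\a}$, and then invoke Morrey's analyticity theorem as in \cite{KNS}. The only difference is that you insert an explicit $C^{2,\a}\to C^\infty$ bootstrap before citing Morrey, whereas the paper passes directly from $C^{2,\a}$ to analytic via \cite[Theorem~6.8.2]{Morrey}; your intermediate step is harmless but unnecessary.
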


\begin{proof} In either situation, after rescaling and translating as needed it suffices to consider the following configuration: there is an open set $\W$, with $0\in \p \W$, $\p \W\cap B_{1}$ is a $C^{1,\a}$ graph over $H$, and $H$ is tangent to $\p \W$ at $0$. On $\W$ there are $N$ functions $\{u_k\}_{k=1}^N$, all of which are in $C^{1,\a}(\bar{\W}\cap B_1)$, vanish on $\p \W$, and are analytic on $\W$. At least one of these (say $u_N$) is strictly positive on $B_1$ and increasing in the $x_n$ direction (this follows from the free boundary condition, since $(u_k)_\nu(0)\neq 0$ for some $k$, and so $|(u_k)_n | \neq 0$ on a sufficiently small ball). The functions each satisfy an equation
\[
 -\triangle u_k = \l_k u_k
\] 
for some numbers $\l_k$ on $\W$, and the free boundary condition (after possibly multiplying each $u_k$ by a constant)
\[
\sum_{k=1}^N (u_k)_\nu^2 = 1
\]
on $\p \W$.

We perform the partial hodograph transform of \cite{KNS}. The mapping $\psi: (x',x_n)\mapsto (x',u_N(x',x_n))$ is a bijection of $\bar{\W}\cap B_1$ onto its image in $H$, and maps $\W$ to $\p H$. The mapping analytic on $\W$ and $C^{1,\a}$ on $\bar{\W}$. Let $v^N$ be the $n-th$ component of the inverse mapping, so as to make $\psi(y',v^N(y',y_n)) = (y',y_n)$. Define $v^k$, for $k=1,\ldots,N-1$, by $v^k(y) = u_k(\psi^{-1}(y))$. From the computations in \cite{KNS}, we see that the $v^k$ satisfy the following system on a neighborhood $Q_r^+$ of $0$ in $H$ (using $\p_i$ for $\p_{y_i}$):
\[
\begin{cases}
 \sum_{i=1}^{n-1}\p_i[(\p_n v^N) (\p_i v^k) -(\p_i v^N)(\p_n v^k) ] + \p_n [\frac{\p_n v^k}{\p_n v^N} &\\
 \qquad+ \sum_{i=1}^{n-1}\p_i v^N (\frac{\p_i v^N}{\p_n v^N}\p_n v^k - \p_i v^k)] = \l_k (\p_n v^N) v^k & k=1,\ldots,N-1  \text{ on } Q_r^+ \sm \p H\\
  -\sum_{i=1}^{n-1}\p_i\p_i v^N + \p_n [\frac{1}{\p_n v^N} + \sum_{i=1}^{n-1}\frac{(\p_i v^N)^2}{\p_n v^N}] = \l_N (\p_n v^N) y_n &  \text{ on } Q_r^+\sm \p H\\
  v^k = 0 & k = 1,\ldots, N-1  \text{ on } Q_r^+ \cap H \\
  (1 + \sum_{i=1}^{n-1}(\p_i v^N)^2)(1 + \sum_{k=1}^{N-1} \p_n v^k) = \p_n v^N &  \text{ on } Q_r^+ \cap \p H.
\end{cases}
\] 
We know that the $v^k$ are in $C^{1,\a}(Q_r^+)\cap C^\8_{\text{loc}}(Q_r^+)$, and have $\p_i v^k(0) = 0$ for $i<n$, but $\p_n v^N(0)>0$.

This system is of the form demanded by Lemma \ref{lem:schauder3}, after possibly making $r$ smaller: the coefficients $A^l(\n v)$ are triangular, analytic in the parameters, and when their derivatives are evaluated at $\n v(0)$  they satisfy assumptions (B),(C). These assumptions then remain satisfied on an open set around $0$.

We apply (a suitably scaled version of) Lemma \ref{lem:schauder3}, to obtain that $v^k\in C^{2,\a}(Q_{r/3}^+)$. Then we may proceed as in \cite{KNS}, applying \cite[Theorem 6.8.2]{Morrey} to conclude that the $v^k$ are analytic up to $\p H$; the analyticity of $v^N$ in particular implies that $\p \W$ is analytic on a neighborhood of $0$. Repeating the argument at each point of $\p^*\W$ implies the conclusion.
\end{proof}

\begin{remark}
	We believe the Schauder theorem in Corollary \ref{cor:schauder2} to be a rather standard result, even though there does not appear to be any convenient reference in the literature. In the scalar case, it may be found in the book \cite{L}, along with analogues of Lemma \ref{lem:schauder3}. The main point in the vectorial setting is that although the a priori regularity is lower than what is used in \cite{ADN2}, the representation formulas and uniqueness results from there may still be applied, after suitable approximation arguments. While we use the fact that the system is triangular to simplify the proof in several places (and to simplify verifying ellipticity and the coercivity of the boundary condition), this should not be regarded as essential. We did not use the particular structure of the boundary condition: any first-order set of boundary conditions (for a purely second-order system) which satisfy the complementing condition would work, with suitably modified statements.
\end{remark}

\section*{Acknowledgments} DK was supported by the NSF MSPRF fellowship DMS-1502852. FL was supported by the NSF grant DMS-1501000.

\bibliographystyle{plain}
\bibliography{nondegenerate}

\def\cprime{$'$}
\begin{thebibliography}{10}

\bibitem{ADN1}
S.~Agmon, A.~Douglis, and L.~Nirenberg.
\newblock Estimates near the boundary for solutions of elliptic partial
  differential equations satisfying general boundary conditions. {I}.
\newblock {\em Comm. Pure Appl. Math.}, 12:623--727, 1959.

\bibitem{ADN2}
S.~Agmon, A.~Douglis, and L.~Nirenberg.
\newblock Estimates near the boundary for solutions of elliptic partial
  differential equations satisfying general boundary conditions. {II}.
\newblock {\em Comm. Pure Appl. Math.}, 17:35--92, 1964.

\bibitem{AC}
H.~W. Alt and L.~A. Caffarelli.
\newblock Existence and regularity for a minimum problem with free boundary.
\newblock {\em J. Reine Angew. Math.}, 325:105--144, 1981.

\bibitem{Bucur}
Dorin Bucur.
\newblock Minimization of the {$k$}-th eigenvalue of the {D}irichlet
  {L}aplacian.
\newblock {\em Arch. Ration. Mech. Anal.}, 206(3):1073--1083, 2012.

\bibitem{BMPV}
Dorin Bucur, Dario Mazzoleni, Aldo Pratelli, and Bozhidar Velichkov.
\newblock Lipschitz regularity of the eigenfunctions on optimal domains.
\newblock {\em Arch. Ration. Mech. Anal.}, 216(1):117--151, 2015.

\bibitem{DB}
Giuseppe Buttazzo and Gianni Dal~Maso.
\newblock An existence result for a class of shape optimization problems.
\newblock {\em Arch. Rational Mech. Anal.}, 122(2):183--195, 1993.

\bibitem{Cabre}
Xavier Cabr{\'e}.
\newblock On the {A}lexandroff-{B}akel\cprime man-{P}ucci estimate and the
  reversed {H}\"older inequality for solutions of elliptic and parabolic
  equations.
\newblock {\em Comm. Pure Appl. Math.}, 48(5):539--570, 1995.

\bibitem{CSY}
L.~A. {Caffarelli}, H.~{Shahgholian}, and K.~{Yeressian}.
\newblock {A minimization problem with free boundary related to a cooperative
  system}.
\newblock {\em ArXiv e-prints}, August 2016.

\bibitem{CS}
Luis Caffarelli and Sandro Salsa.
\newblock {\em A geometric approach to free boundary problems}, volume~68 of
  {\em Graduate Studies in Mathematics}.
\newblock American Mathematical Society, Providence, RI, 2005.

\bibitem{C1}
Luis~A. Caffarelli.
\newblock A {H}arnack inequality approach to the regularity of free boundaries.
  {I}. {L}ipschitz free boundaries are {$C^{1,\alpha}$}.
\newblock {\em Rev. Mat. Iberoamericana}, 3(2):139--162, 1987.

\bibitem{C3}
Luis~A. Caffarelli.
\newblock A {H}arnack inequality approach to the regularity of free boundaries.
  {III}.\ {E}xistence theory, compactness, and dependence on {$X$}.
\newblock {\em Ann. Scuola Norm. Sup. Pisa Cl. Sci. (4)}, 15(4):583--602
  (1989), 1988.

\bibitem{C2}
Luis~A. Caffarelli.
\newblock A {H}arnack inequality approach to the regularity of free boundaries.
  {II}. {F}lat free boundaries are {L}ipschitz.
\newblock {\em Comm. Pure Appl. Math.}, 42(1):55--78, 1989.

\bibitem{CTZ}
Gui-Qiang Chen, Monica Torres, and William~P. Ziemer.
\newblock Gauss-{G}reen theorem for weakly differentiable vector fields, sets
  of finite perimeter, and balance laws.
\newblock {\em Comm. Pure Appl. Math.}, 62(2):242--304, 2009.

\bibitem{DT}
G.~David and T.~Toro.
\newblock Regularity of almost minimizers with free boundary.
\newblock {\em Calc. Var. Partial Differential Equations}, 54(1):455--524,
  2015.

\bibitem{DeSilva}
D.~De~Silva.
\newblock Free boundary regularity for a problem with right hand side.
\newblock {\em Interfaces Free Bound.}, 13(2):223--238, 2011.

\bibitem{DFS5}
D.~{De Silva}, F.~{Ferrari}, and S.~{Salsa}.
\newblock {Regularity of higher order in two-phase free boundary problems}.
\newblock {\em ArXiv e-prints}, January 2017.

\bibitem{DFS4}
Daniela De~Silva, Fausto Ferrari, and Sandro Salsa.
\newblock On two phase free boundary problems governed by elliptic equations
  with distributed sources.
\newblock {\em Discrete Contin. Dyn. Syst. Ser. S}, 7(4):673--693, 2014.

\bibitem{DFS3}
Daniela De~Silva, Fausto Ferrari, and Sandro Salsa.
\newblock Two-phase problems with distributed sources: regularity of the free
  boundary.
\newblock {\em Anal. PDE}, 7(2):267--310, 2014.

\bibitem{DFS2}
Daniela De~Silva, Fausto Ferrari, and Sandro Salsa.
\newblock Free boundary regularity for fully nonlinear non-homogeneous
  two-phase problems.
\newblock {\em J. Math. Pures Appl. (9)}, 103(3):658--694, 2015.

\bibitem{DFS1}
Daniela De~Silva, Fausto Ferrari, and Sandro Salsa.
\newblock Regularity of the free boundary for two-phase problems governed by
  divergence form equations and applications.
\newblock {\em Nonlinear Anal.}, 138:3--30, 2016.

\bibitem{GT}
David Gilbarg and Neil~S. Trudinger.
\newblock {\em Elliptic partial differential equations of second order}, volume
  224 of {\em Grundlehren der Mathematischen Wissenschaften [Fundamental
  Principles of Mathematical Sciences]}.
\newblock Springer-Verlag, Berlin, second edition, 1983.

\bibitem{H}
Antoine Henrot.
\newblock {\em Extremum problems for eigenvalues of elliptic operators}.
\newblock Frontiers in Mathematics. Birkh\"auser Verlag, Basel, 2006.

\bibitem{JS}
David Jerison and Ovidiu Savin.
\newblock Some remarks on stability of cones for the one-phase free boundary
  problem.
\newblock {\em Geom. Funct. Anal.}, 25(4):1240--1257, 2015.

\bibitem{KNS}
D.~Kinderlehrer, L.~Nirenberg, and J.~Spruck.
\newblock Regularity in elliptic free boundary problems.
\newblock {\em J. Analyse Math.}, 34:86--119 (1979), 1978.

\bibitem{L}
Gary~M. Lieberman.
\newblock {\em Oblique derivative problems for elliptic equations}.
\newblock World Scientific Publishing Co. Pte. Ltd., Hackensack, NJ, 2013.

\bibitem{MTV}
D.~{Mazzoleni}, S.~{Terracini}, and B.~{Velichkov}.
\newblock {Regularity of the optimal sets for some spectral functionals}.
\newblock {\em ArXiv e-prints}, September 2016.

\bibitem{MP}
Dario Mazzoleni and Aldo Pratelli.
\newblock Existence of minimizers for spectral problems.
\newblock {\em J. Math. Pures Appl. (9)}, 100(3):433--453, 2013.

\bibitem{Morrey}
Charles~B. Morrey, Jr.
\newblock {\em Multiple integrals in the calculus of variations}.
\newblock Die Grundlehren der mathematischen Wissenschaften, Band 130.
  Springer-Verlag New York, Inc., New York, 1966.

\bibitem{RTT}
Miguel Ramos, Hugo Tavares, and Susanna Terracini.
\newblock Extremality conditions and regularity of solutions to optimal
  partition problems involving {L}aplacian eigenvalues.
\newblock {\em Arch. Ration. Mech. Anal.}, 220(1):363--443, 2016.

\bibitem{Weiss}
Georg~S. Weiss.
\newblock Boundary monotonicity formulae and applications to free boundary
  problems. {I}. {T}he elliptic case.
\newblock {\em Electron. J. Differential Equations}, pages No. 44, 12 pp.
  (electronic), 2004.

\end{thebibliography}

\end{document}